\numberwithin{equation}{section}
\newcommand{\N}{\mathbb{N}}
\newcommand{\R}{\mathbb{R}}
\newcommand{\X}{{\rm X}}
\newcommand{\sfd}{{\sf d}}
\newcommand{\mm}{\mathfrak m}
\renewcommand{\d}{{\mathrm d}}
\newcommand{\1}{\mathbbm 1}
\newcommand{\eps}{\varepsilon}
\newcommand{\limi}{\varliminf}
\newcommand{\lims}{\varlimsup}
\newcommand{\LIP}{{\rm LIP}}
\newcommand{\lip}{{\rm lip}}
\newcommand{\loc}{{\rm loc}}
\newcommand{\BV}{{\rm BV}}
\renewcommand{\P}{{\sf P}}
\renewcommand{\H}{{\mathcal H}}
\newcommand{\CC}{\mathcal{CC}^e}
\newcommand{\sat}{{\rm sat}}
\newcommand{\fr}{\penalty-20\null\hfill\(\blacksquare\)}
\newtheorem{theorem}{Theorem}[section]
\newtheorem{corollary}[theorem]{Corollary}
\newtheorem{lemma}[theorem]{Lemma}
\newtheorem{proposition}[theorem]{Proposition}
\newtheorem{definition}[theorem]{Definition}
\newtheorem{example}[theorem]{Example}
\newtheorem{remark}[theorem]{Remark}
\title[Indecomposable sets of finite perimeter in doubling metric measure spaces]
{Indecomposable sets of finite perimeter in doubling metric measure spaces}
\author{Paolo Bonicatto, Enrico Pasqualetto, and Tapio Rajala}
\address{Departement Mathematik und Informatik, Universit\"at Basel,
Spiegelgasse 1, CH-4051, Basel, Switzerland}
\email{paolo.bonicatto@unibas.ch}
\address{University of Jyvaskyla\\
         Department of Mathematics and Statistics \\
         P.O. Box 35 (MaD) \\
         FI-40014 University of Jyvaskyla \\
         Finland}
\email{enrico.e.pasqualetto@jyu.fi}
\email{tapio.m.rajala@jyu.fi}
\begin{document}
\subjclass[2010]{26B30, 53C23}
\keywords{Set of finite perimeter, PI space, indecomposable set}
\date{\today} 
\begin{abstract}
We study a measure-theoretic notion of connectedness for sets
of finite perimeter in the setting of doubling metric measure spaces supporting
a weak \((1,1)\)-Poincar\'{e} inequality. The two main results we obtain
are a decomposition theorem into indecomposable sets and a characterisation of
extreme points in the space of BV functions. In both cases, the proof we
propose requires an additional assumption on the space, which is called isotropicity
and concerns the Hausdorff-type representation of the perimeter measure.
\end{abstract}
\maketitle
\tableofcontents
\section*{Introduction}
The classical Euclidean theory of functions of bounded variation and
sets of finite perimeter -- whose cornerstones are represented, for instance,
by \cite{Caccioppoli52,DeGiorgi54,Federer69,Giusti85,Mattila95,AFP00}
-- has been successfully generalised in different directions,
to several classes of metric structures. Amongst the many important
contributions in this regard, we just single
out the pioneering works \cite{BM95,FSSC96,GN96,DGN98,BBF99,Baldi01}.
Although the basic theory of BV functions can be developed on abstract
metric measure spaces (see, e.g., \cite{ADM14}), it is in the
framework of doubling spaces supporting a weak \((1,1)\)-Poincar\'{e}
inequality (in the sense of Heinonen--Koskela \cite{HK98}) that quite
a few fine properties are satisfied (see \cite{Ambrosio01,Ambrosio02,Miranda03}).

The aim of the present paper is to study the notion of \emph{indecomposable set}
of finite perimeter on doubling spaces supporting a weak \((1,1)\)-Poincar\'{e}
inequality (that we call \emph{PI spaces} for brevity).
By indecomposable set we mean a set of finite perimeter \(E\) that cannot be
written as disjoint union of two non-negligible sets \(F,G\) satisfying
\(\P(E)=\P(F)+\P(G)\). This concept constitutes the measure-theoretic counterpart
to the topological notion of `connected set' and, as such, many statements concerning
connectedness have a correspondence in the context of indecomposable sets.

In the Euclidean framework, the main properties of indecomposable sets have
been systematically investigated by L.\ Ambrosio, V.\ Caselles, S.\ Masnou,
and J.-M.\ Morel in \cite{ACMM01}. The results of this paper are mostly
inspired by (and actually extend) the contents of \cite{ACMM01}.
In the remaining part of the Introduction, we will briefly describe our
two main results: the \emph{decomposition theorem for sets of finite perimeter}
and the \emph{characterisation of extreme points in the space of BV functions}.
In both cases, the natural setting to work in is that of PI spaces satisfying an
additional condition -- called \emph{isotropicity} -- which we are going to describe
in the following paragraph.
\medskip

Let \((\X,\sfd,\mm)\) be a PI space and \(E\subset\X\) a set of finite perimeter;
we refer to Section \ref{s:preliminaries} for the precise definition of perimeter
and the terminology used in the following. The perimeter measure \(\P(E,\cdot)\)
associated to \(E\) can be written as \(\theta_E\,\H\llcorner_{\partial^e E}\),
where \(\H\) stands for the \emph{codimension-one Hausdorff measure} on \(\X\),
while \(\partial^e E\) is the \emph{essential boundary} of \(E\) (i.e.,
the set of points where neither the density of \(E\) nor that of its
complement vanishes) and \(\theta_E\colon\partial^e E\to[0,+\infty)\)
is a suitable density function; cf.\ Theorem \ref{thm:repr_per}.
The integral representation formula was initially proven in \cite{Ambrosio01}
only for Ahlfors-regular spaces, and this additional assumption has been
subsequently removed in \cite{Ambrosio02}. It is worth to point out that the
weight function \(\theta_E\) might (and, in some cases, does) depend on the set \(E\)
itself; see, for instance, Example \ref{ex:non_isotropic}. In this paper,
we mainly focus our attention on those PI spaces where \(\theta_E\) is
independent of \(E\), which are said to be \emph{isotropic} (the terminology
comes from \cite{AMP04}). As we will discuss in Example \ref{ex:isotr_spaces},
the class of isotropic PI spaces includes weighted Euclidean spaces, Carnot
groups of step \(2\) and non-collapsed \(\sf RCD\) spaces. Another key feature
of the theory of sets of finite perimeter in PI spaces is given by the
\emph{relative isoperimetric inequality} (see Theorem \ref{thm:rel_isoper}
below), which has been obtained by M.\ Miranda in the paper \cite{Miranda03}.

Our main result (namely, Theorem \ref{thm:decomposition_thm}) states that on
isotropic PI spaces any set of finite perimeter \(E\) can be written as (finite
or countable) disjoint union of indecomposable sets. Moreover, these components
-- called \emph{essential connected components} of \(E\) -- are uniquely determined
and maximal with respect to inclusion, meaning that any indecomposable subset of
\(E\) must be contained (up to null sets) in one of them. We propose
two different proofs of this decomposition result, in Sections \ref{s:decomposition}
and \ref{s:Lyapunov}, respectively. The former is a variational argument
that was originally carried out in \cite{ACMM01}, while the latter is adapted
from \cite{Kirchheim98} and based on Lyapunov's convexity theorem.
However, both approaches strongly rely upon three fundamental ingredients:
representation formula for the perimeter measure, relative isoperimetric
inequality, and isotropicity. We do not know whether the last one is in fact needed
for the decomposition to hold (see also Example \ref{ex:doubling_fails_decomp}).

Furthermore, in Section \ref{s:extreme_BV} we study the extreme points in the
space \(\BV(\X)\) of functions of bounded variation defined over \(\X\);
we are again assuming \((\X,\sfd,\mm)\) to be an isotropic PI space. More precisely:
call \(\mathcal K(\X;K)\) the family of all those functions \(f\in\BV(\X)\)
supported in \(K\), whose total variation satisfies \(|Df|(\X)\leq 1\) (where
\(K\subset\X\) is a fixed compact set). Then we can completely characterise
(under a few additional assumptions) the extreme points of \(\mathcal K(\X;K)\)
as a convex, compact subset of \(L^1(\mm)\); see Theorem \ref{thm:extreme_points_BV}.
It turns out that these extreme points coincide (up to a sign) with the normalised
characteristic functions of \emph{simple sets} (cf.\ Definition \ref{def:simple_sets}).
In the Euclidean case, the very same result was proven by W.\ H.\ Fleming
in \cite{Fleming57,Fleming60} (see also \cite{BG19}). Part of Section \ref{s:extreme_BV} is dedicated to
some equivalent definitions of simple set: in the general framework of isotropic PI
spaces, a plethora of phenomena concerning simple sets may occur, differently from
what happens in \(\R^n\) (see \cite{ACMM01}). For more details,
we refer to the discussion at the beginning of Subsection \ref{ss:simple_and_extreme}.
\medskip

\noindent\textit{Acknowledgements.}
The first named author acknowledges ERC Starting Grant 676675 FLIRT.
The second and third named authors are partially supported by the
Academy of Finland, projects 274372, 307333, 312488, and 314789.
\section{Preliminaries}\label{s:preliminaries}
For our purposes, by \emph{metric measure space} we mean a triple \((\X,\sfd,\mm)\),
where \((\X,\sfd)\) is a complete and separable metric space, while \(\mm\neq 0\)
is a non-negative, locally finite Borel measure on \(\X\). For any open set
\(\Omega\subset\X\) we denote by \(\LIP_\loc(\Omega)\) the space of all \(\R\)-valued
locally Lipschitz functions on \(\Omega\), while \(\LIP_{\rm bs}(\X)\) is
the family of all those Lipschitz functions \(f\colon\X\to\R\) whose support
\({\rm spt}(f)\) is bounded. Given any \(f\in\LIP_\loc(\X)\), we define the functions
\(\lip(f),\lip_a(f)\colon\X\to[0,+\infty)\) as
\[
\lip(f)(x)\coloneqq\lims_{y\to x}\frac{\big|f(y)-f(x)\big|}{\sfd(y,x)},
\quad\lip_a(f)(x)\coloneqq\lims_{y,z\to x}\frac{\big|f(y)-f(z)\big|}{\sfd(y,z)}
\]
whenever \(x\in\X\) is an accumulation point, and \(\lip(f)(x),\lip_a(f)(x)\coloneqq 0\)
elsewhere. We call \(\lip(f)\) and \(\lip_a(f)\) the
\emph{local Lipschitz constant} and the \emph{asymptotic Lipschitz constant}
of \(f\), respectively.

We denote by \(L^0(\mm)\) the family of all real-valued Borel functions
on \(\X\), considered up to \(\mm\)-a.e.\ equality. For any given exponent
\(p\in[1,\infty]\), we indicate by \(L^p(\mm)\subset L^0(\mm)\)
and \(L^p_\loc(\mm)\subset L^0(\mm)\) the spaces of all \(p\)-integrable
functions and locally \(p\)-integrable functions, respectively. Given an
open set \(\Omega\subset\X\) and any \(E\subset\Omega\),
we write \(E\Subset\Omega\) to specify that \(E\) is bounded
and \({\rm dist}(E,\X\setminus\Omega)>0\).
\subsection{Functions of bounded variation}
In the framework of general metric measure spaces, the definition of
\emph{function of bounded variation} -- which is typically abbreviated
to `BV function' -- has been originally introduced in \cite{Miranda03}
and is based upon a relaxation procedure. Let us recall it:
\begin{definition}[Function of bounded variation]
Let \((\X,\sfd,\mm)\) be a metric measure space. Fix any function
\(f\in L^1_\loc(\mm)\). Given any open set \(\Omega\subset\X\), we define
the \emph{total variation of \(f\) on \(\Omega\)} as
\begin{equation}\label{eq:total_variation_measure}
|Df|(\Omega)\coloneqq\inf\bigg\{\limi_{n\to\infty}\int_\Omega\lip(f_n)\,\d\mm
\;\bigg|\;(f_n)_n\subset\LIP_\loc(\Omega),\;f_n\to f
\text{ in }L^1_\loc(\mm\llcorner_\Omega)\bigg\}.
\end{equation}
Then \(f\) is said to be \emph{of bounded variation} -- briefly,
\(f\in\BV(\X)\) -- if \(f\in L^1(\mm)\) and \(|Df|(\X)<+\infty\).
\end{definition}
We can extend the function \(|Df|\) defined in \eqref{eq:total_variation_measure}
to all Borel sets via Carath\'{e}odory construction:
\[
|Df|(B)\coloneqq\inf\big\{|Df|(\Omega)\;\big|
\;\Omega\subset\X\text{ open,}\;B\subset \Omega\big\}
\quad\text{ for every }B\subset\X\text{ Borel.}
\]
This way we obtain a finite Borel measure \(|Df|\) on \(\X\),
which is called the \emph{total variation measure} of \(f\).
\begin{proposition}[Basic properties of BV functions]
\label{prop:properties_BV_functions}
Let \((\X,\sfd,\mm)\) be a metric measure space. Let \(f,g\in L^1_\loc(\mm)\).
Let \(B\subset\X\) be Borel and \(\Omega\subset\X\) open.
Then the following properties hold:
\begin{itemize}
\item[\(\rm i)\)] \textsc{Lower semicontinuity.} The function \(|D\cdot|(\Omega)\)
is lower semicontinuous with respect to the \(L^1_\loc(\mm_{\llcorner\Omega})\)-topology:
namely, given any sequence \((f_n)_n\subset L^1_\loc(\mm)\) such that \(f_n\to f\)
in the \(L^1_\loc(\mm\llcorner_\Omega)\)-topology, it holds that
\(|Df|(\Omega)\leq\limi_n|Df_n|(\Omega)\).
\item[\(\rm ii)\)] \textsc{Subadditivity.} It holds that
\(\big|D(f+g)\big|(B)\leq|Df|(B)+|Dg|(B)\).
\item[\(\rm iii)\)] \textsc{Compactness.} Let \((f_n)_n\subset L^1_\loc(\mm)\)
be a sequence satisfying \(\sup_n|Df_n|(\X)<+\infty\). Then there exist a
subsequence \((n_i)_i\) and some \(f_\infty\in L^1_\loc(\mm)\) such that
\(f_{n_i}\to f_\infty\) in \(L^1_\loc(\mm)\).
\end{itemize}
\end{proposition}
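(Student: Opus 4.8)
The plan is to obtain (i) and (ii) as soft consequences of the relaxed definition \eqref{eq:total_variation_measure} via diagonal arguments, and to treat (iii) — the only genuinely non-elementary part — through the geometric machinery of PI spaces. Throughout I would use that, for open $\Omega\subset\X$, convergence in $L^1_\loc(\mm\llcorner_\Omega)$ is induced by a complete metric built from an exhaustion $\Omega=\bigcup_h\Omega_h$ by bounded open sets together with the seminorms $u\mapsto\int_{\Omega_h}|u|\,\d\mm$. For (i): given $f_n\to f$ in $L^1_\loc(\mm\llcorner_\Omega)$, after passing to a subsequence I may assume $\limi_n|Df_n|(\Omega)=\lim_n|Df_n|(\Omega)=:L<\infty$ (otherwise there is nothing to prove). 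For each $n$ I would pick, via \eqref{eq:total_variation_measure}, a sequence $(g^n_k)_k\subset\LIP_\loc(\Omega)$ with $g^n_k\to f_n$ in $L^1_\loc(\mm\llcorner_\Omega)$ and $\limi_k\int_\Omega\lip(g^n_k)\,\d\mm\le|Df_n|(\Omega)+1/n$, and then choose $k_n$ so large that $h_n:=g^n_{k_n}$ lies within distance $1/n$ of $f_n$ in the metric and $\int_\Omega\lip(h_n)\,\d\mm\le|Df_n|(\Omega)+2/n$. Then $(h_n)_n\subset\LIP_\loc(\Omega)$, $h_n\to f$ in $L^1_\loc(\mm\llcorner_\Omega)$ by the triangle inequality, and $\limi_n\int_\Omega\lip(h_n)\,\d\mm\le L$, so \eqref{eq:total_variation_measure} yields $|Df|(\Omega)\le L$.

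For (ii): I would argue first on an open set $\Omega$. Choosing (along subsequences) $(f_n)_n,(g_n)_n\subset\LIP_\loc(\Omega)$ with $f_n\to f$, $g_n\to g$ in $L^1_\loc(\mm\llcorner_\Omega)$ and $\int_\Omega\lip(f_n)\,\d\mm\to|Df|(\Omega)$, $\int_\Omega\lip(g_n)\,\d\mm\to|Dg|(\Omega)$, the functions $f_n+g_n\in\LIP_\loc(\Omega)$ converge to $f+g$ and satisfy $\lip(f_n+g_n)\le\lip(f_n)+\lip(g_n)$ pointwise by subadditivity of $\lims$, whence $|D(f+g)|(\Omega)\le\limi_n\int_\Omega\lip(f_n+g_n)\,\d\mm\le|Df|(\Omega)+|Dg|(\Omega)$. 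For a general Borel $B$ I would record the monotonicity $|Dh|(\Omega')\le|Dh|(\Omega)$ for open $\Omega'\subset\Omega$ (restricting an admissible sequence from $\Omega$ to $\Omega'$ can only shrink the domain of integration and the local Lipschitz constant), then, given $\eps>0$, use the Carath\'eodory definition to pick open $\Omega_f,\Omega_g\supset B$ with $|Df|(\Omega_f)\le|Df|(B)+\eps$, $|Dg|(\Omega_g)\le|Dg|(B)+\eps$; with $\Omega:=\Omega_f\cap\Omega_g\supset B$ the open case and monotonicity give $|D(f+g)|(B)\le|D(f+g)|(\Omega)\le|Df|(\Omega_f)+|Dg|(\Omega_g)\le|Df|(B)+|Dg|(B)+2\eps$, and letting $\eps\downarrow 0$ concludes.

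For (iii) — the substantial part — I would exhaust $\X$ by metric balls $B_j$ of radii $j\to\infty$ about a fixed point and reduce, by a diagonal extraction over $j$, to proving that for each fixed $j$ the sequence $(f_n|_{B_j})_n$ is precompact in $L^1(\mm\llcorner_{B_j})$; the limits obtained along the successive subsequences are consistent on the exhaustion and assemble into some $f_\infty\in L^1_\loc(\mm)$ with $f_{n_i}\to f_\infty$ in $L^1_\loc(\mm)$. The precompactness on a fixed ball is where the assumptions on $(\X,\sfd,\mm)$ enter: the weak $(1,1)$-Poincar\'e inequality bounds the $L^1$-oscillation of $f_n$ on $B_j$ in terms of $|Df_n|(\lambda B_j)\le\sup_n|Df_n|(\X)$ (for a fixed dilation factor $\lambda$), and the doubling property then upgrades this uniform $\BV$-type bound to total boundedness in $L^1(\mm\llcorner_{B_j})$ via a Rellich--Kondrachov argument — equivalently, via the relative isoperimetric inequality (Theorem \ref{thm:rel_isoper}) together with a covering of $B_j$ at small scales. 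I expect this step to be the main obstacle: it must be organised so that all estimates are uniform in $n$ and depend only on the total-variation bound, and it relies genuinely on the PI structure rather than on soft functional analysis. Being essentially the compactness theorem of \cite{Miranda03} (see also \cite{Ambrosio02}), I would refer there for the remaining details.
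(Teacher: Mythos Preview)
The paper does not actually supply a proof of this proposition: it is stated as a list of standard facts and is immediately followed by Remark~\ref{rmk:BV_lattice}. Your arguments for (i) and (ii) are the usual ones and are correct. One tiny inaccuracy: in the monotonicity step for (ii) the local Lipschitz constant does not ``shrink'' upon restricting from $\Omega$ to $\Omega'$ --- it is unchanged, since $\lip(f)(x)$ depends only on values of $f$ on arbitrarily small neighbourhoods of $x$; only the domain of integration shrinks, which is all you need.

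For (iii) you are right that the argument genuinely requires the PI hypothesis and cannot go through on a general metric measure space as the proposition is literally stated. A quick counterexample: on $[0,1]$ equipped with the snowflaked distance $|x-y|^{1/2}$ and Lebesgue measure, every $L^1$ function has $|Df|\equiv 0$ (smooth Euclidean functions have vanishing $\lip$ in this metric and are dense), so the hypothesis is vacuous while $L^1$ is not compact. The paper's statement is therefore slightly imprecise; compactness is only invoked later in contexts where the space is PI (e.g.\ in the proof of Proposition~\ref{prop:decomposition_thm_aux}), and there your reduction to balls plus the Rellich--Kondrachov theorem from \cite{Miranda03} is exactly the intended justification.
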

\begin{remark}\label{rmk:BV_lattice}{\rm
Let \((\X,\sfd,\mm)\) be a metric measure space.
Fix \(f\in\BV(\X)\) and \(m\in\R\). Then
\begin{equation}\label{eq:BV_lattice}
f\wedge m\in\BV(\X)\quad\text{ and }\quad\big|D(f\wedge m)\big|(\X)\leq|Df|(\X).
\end{equation}
Indeed, pick any \((f_n)_n\subset\LIP_\loc(\X)\) such that \(f_n\to f\) in
\(L^1_\loc(\mm)\) and \(\int\lip(f_n)\,\d\mm\to|Df|(\X)\). Therefore, it holds that the
sequence \((f_n\wedge m)_n\subset\LIP_\loc(\X)\) satisfies \(f_n\wedge m\to f\wedge m\)
in \(L^1_\loc(\mm)\) and \(\lip(f_n\wedge m)\leq\lip(f_n)\) for all \(n\in\N\).
We thus conclude that
\[
\big|D(f\wedge m)\big|(\X)\leq\limi_{n\to\infty}\int\lip(f_n\wedge m)\,\d\mm
\leq\limi_{n\to\infty}\int\lip(f_n)\,\d\mm=|Df|(\X),
\]
which yields the statement.
\fr}\end{remark}
We conclude this subsection by briefly recalling an alternative (but equivalent)
approach to the theory of BV functions on abstract metric measure spaces,
which has been proposed in \cite{DiMarinoPhD,DiMarino14}. 
\medskip

A \emph{derivation} over a metric measure space \((\X,\sfd,\mm)\) is a
linear map \(\boldsymbol b\colon\LIP_{\rm bs}(\X)\to L^0(\mm)\) such that
the following properties are satisfied:
\begin{itemize}
\item[\(\rm i)\)] \textsc{Leibniz rule.}
\(\boldsymbol b(fg)=\boldsymbol b(f)\,g+f\,\boldsymbol b(g)\)
for every \(f,g\in\LIP_{\rm bs}(\X)\).
\item[\(\rm ii)\)] \textsc{Weak locality.} There exists a non-negative
function \(G\in L^0(\mm)\) such that
\[
\big|\boldsymbol b(f)\big|\leq G\,\lip_{\rm a}(f)\;\;\;\mm\text{-a.e.}
\quad\text{ for every }f\in\LIP_{\rm bs}(\X).
\]
The least function \(G\) (in the \(\mm\)-a.e.\ sense) having this property
is denoted by \(|\boldsymbol b|\).
\end{itemize}
The space of all derivations over \((\X,\sfd,\mm)\) is denoted by \({\rm Der}(\X)\).
The \emph{support} \({\rm spt}(\boldsymbol b)\subset\X\) of a derivation
\(\boldsymbol b\in{\rm Der}(\X)\) is defined as the essential closure of
the set \(\big\{|\boldsymbol b|\neq 0\big\}\). Given any \(\boldsymbol b\in{\rm Der}(\X)\)
with \(|\boldsymbol b|\in L^1_\loc(\mm)\), we say that \({\rm div}(\boldsymbol b)\in L^p\)
for some \(p\in[1,\infty]\) provided there exists a (necessarily unique)
function \({\rm div}(\boldsymbol b)\in L^p(\mm)\) such that
\(\int\boldsymbol b(f)\,\d\mm=-\int f\,{\rm div}(\boldsymbol b)\,\d\mm\)
for every \(f\in\LIP_{\rm bs}(\X)\). The space of all derivations
\(\boldsymbol b\in{\rm Der}(\X)\) with \(|\boldsymbol b|\in L^\infty(\mm)\)
and \({\rm div}(\boldsymbol b)\in L^\infty\) is denoted by \({\rm Der}_{\rm b}(\X)\).
\begin{theorem}[Representation formula for \(|Df|\) via derivations]
\label{thm:repr_Df_deriv}
Let \((\X,\sfd,\mm)\) be a metric measure space. Let \(f\in\BV(\X)\) be given.
Then for every open set \(\Omega\subset\X\) it holds that
\[
|Df|(\Omega)=\sup\bigg\{\int_\Omega f\,{\rm div}(\boldsymbol b)\,\d\mm
\;\bigg|\;\boldsymbol b\in{\rm Der}_{\rm b}(\X),\;|\boldsymbol b|\leq 1\;
\mm\text{-{}\rm a.e.},\;{\rm spt}(\boldsymbol b)\Subset\Omega\bigg\}.
\]
\end{theorem}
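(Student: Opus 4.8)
The statement is essentially contained in \cite{DiMarinoPhD,DiMarino14}, and I would organise the proof as the two inequalities between $|Df|(\Omega)$ and the right-hand side; throughout I write $\nu(\Omega)$ for the latter and regard it as a set function on the open subsets of $\X$.

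For the inequality $\nu(\Omega)\le|Df|(\Omega)$ I would argue by a cutoff plus integration by parts. Fix an admissible derivation $\boldsymbol b\in{\rm Der}_{\rm b}(\X)$, with $|\boldsymbol b|\le 1$ $\mm$-a.e.\ and $K\coloneqq{\rm spt}(\boldsymbol b)\Subset\Omega$; testing the defining identity of ${\rm div}(\boldsymbol b)$ against Lipschitz functions supported away from $K$ shows that ${\rm div}(\boldsymbol b)=0$ $\mm$-a.e.\ on $\X\setminus K$. Next choose $\chi\in\LIP_{\rm bs}(\X)$ with $0\le\chi\le 1$, $\chi\equiv 1$ on an open set $U$ with $K\subset U\Subset\Omega$, and ${\rm spt}(\chi)\subset\Omega$, and choose $(f_n)_n\subset\LIP_\loc(\Omega)$ with $f_n\to f$ in $L^1_\loc(\mm\llcorner_\Omega)$ and $\limi_n\int_\Omega\lip_a(f_n)\,\d\mm=|Df|(\Omega)$ -- here one uses that the relaxed functional in \eqref{eq:total_variation_measure} is unchanged when $\lip$ is replaced by $\lip_a$, which is part of \cite{ADM14}. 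Since $\chi f_n\in\LIP_{\rm bs}(\X)$, the Leibniz rule, together with $|\boldsymbol b|=0$ $\mm$-a.e.\ on $\X\setminus K$ and $\lip_a(\chi)=0$ on $U$, gives $\boldsymbol b(\chi f_n)=\boldsymbol b(f_n)$ and $\big|\boldsymbol b(f_n)\big|\le\1_U\,\lip_a(f_n)$ $\mm$-a.e.; since moreover ${\rm div}(\boldsymbol b)$ is concentrated on $K\subset U$ and $\chi f_n\to\chi f$ in $L^1(\mm)$, an integration by parts yields
\[
\int_\Omega f\,{\rm div}(\boldsymbol b)\,\d\mm=\lim_{n\to\infty}\int_\X\chi f_n\,{\rm div}(\boldsymbol b)\,\d\mm
=-\lim_{n\to\infty}\int_\X\boldsymbol b(\chi f_n)\,\d\mm\le\limi_{n\to\infty}\int_\Omega\lip_a(f_n)\,\d\mm=|Df|(\Omega).
\]
Taking the supremum over $\boldsymbol b$ gives $\nu(\Omega)\le|Df|(\Omega)$, and in particular $\nu(\X)\le|Df|(\X)<+\infty$.

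For the converse inequality $|Df|(\Omega)\le\nu(\Omega)$ I would first verify, via the De Giorgi--Letta criterion, that $\nu$ is the restriction to open sets of a finite Borel measure: monotonicity and inner regularity with respect to relatively compact open sets are immediate, superadditivity on disjoint open sets follows by adding two admissible derivations with disjoint supports (which keeps the pointwise norm below $1$ and adds the divergences), and subadditivity follows by decomposing an admissible $\boldsymbol b$ on $\Omega_1\cup\Omega_2$ as $\chi_1\boldsymbol b+\chi_2\boldsymbol b$ for a Lipschitz partition of unity $\{\chi_1,\chi_2\}$ subordinate to the cover, the nuisance terms $\int f\,\boldsymbol b(\chi_i)\,\d\mm$ cancelling because $\chi_1+\chi_2\equiv 1$ on a neighbourhood of ${\rm spt}(\boldsymbol b)$ while $|\boldsymbol b|=0$ $\mm$-a.e.\ off ${\rm spt}(\boldsymbol b)$. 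Since we already know $\nu\le|Df|$ as Borel measures and both are finite, it then suffices to prove $|Df|(\X)\le\nu(\X)$, after which $\nu=|Df|$ and in particular $\nu(\Omega)=|Df|(\Omega)$ for every open $\Omega$. To obtain $|Df|(\X)\le\nu(\X)$ I would pass to $L^1$--$L^\infty$ duality: the functional $g\mapsto|Dg|(\X)$ on $L^1(\mm)$ is convex, positively $1$-homogeneous and $L^1(\mm)$-lower semicontinuous by Proposition \ref{prop:properties_BV_functions}, so the Fenchel--Moreau theorem gives
\[
|Df|(\X)=\sup\bigg\{\int_\X f\,u\,\d\mm\;\bigg|\;u\in L^\infty(\mm),\ \int_\X h\,u\,\d\mm\le|Dh|(\X)\ \text{ for every }h\in L^1(\mm)\bigg\},
\]
and the remaining point is to show that every competitor $u$ in this supremum is, in a suitable weak sense, the divergence of an admissible derivation, so that the supremum is at most $\nu(\X)$. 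This last step is the genuinely hard one -- it amounts to establishing that ${\rm Der}_{\rm b}(\X)$ contains enough derivations -- and it is exactly what is carried out in \cite{DiMarino14} by a nontrivial approximation/duality argument; I would simply invoke that construction. Everything else being soft, this existence statement is the main obstacle.
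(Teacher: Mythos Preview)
The paper does not actually prove this theorem: it simply states ``For a proof of the above representation formula, we refer to \cite[Theorem 7.3.4]{DiMarinoPhD}.'' Your proposal does the same---you explicitly attribute the result to \cite{DiMarinoPhD,DiMarino14}---but you go further and sketch the argument, correctly identifying the easy direction (integration by parts against Lipschitz approximants), the De~Giorgi--Letta step, and the Fenchel--Moreau duality as the route to the hard inequality, while rightly flagging that the existence of enough derivations is the nontrivial input imported from Di~Marino's work. So your proposal is consistent with, and strictly more informative than, what the paper provides.
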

For a proof of the above representation formula,
we refer to \cite[Theorem 7.3.4]{DiMarinoPhD}.
\subsection{Sets of finite perimeter}
The study of \emph{sets of finite perimeter} on abstract metric measure spaces
has been initiated in \cite{Miranda03} (where, differently from here, the term
`Caccioppoli set' is used). In this subsection we report the definition of set
of finite perimeter and its basic properties, more precisely the ones that are
satisfied on any metric measure space (without any further assumption).
\begin{definition}[Set of finite perimeter]
Let \((\X,\sfd,\mm)\) be a metric measure space. Fix any Borel set
\(E\subset\X\). Let us define
\[
\P(E,B)\coloneqq|D\1_E|(B)\quad\text{ for every Borel set }B\subset\X.
\]
The quantity \(\P(E,B)\) is called \emph{perimeter of \(E\) in \(B\)}.
Then the set \(E\) \emph{has finite perimeter} provided
\[
\P(E)\coloneqq\P(E,\X)<+\infty.
\]
The finite Borel measure \(\P(E,\cdot)\) on \(\X\) is called
the \emph{perimeter measure} associated to \(E\).
\end{definition}
\begin{remark}{\rm
Given a Borel set \(E\subset\X\) satisfying \(\mm(E)<+\infty\), it holds that
\(E\) has finite perimeter if and only if \(\1_E\in\BV(\X)\).
\fr}\end{remark}
\begin{proposition}[Basic properties of sets of finite perimeter]
\label{prop:properties_finite_perimeter_sets}
Let \((\X,\sfd,\mm)\) be a metric measure space. Let \(E,F\subset\X\) be sets of
finite perimeter. Let \(B\subset\X\) be Borel and \(\Omega\subset\X\) open. Then:
\begin{itemize}
\item[\(\rm i)\)] \textsc{Locality.} If \(\mm\big((E\Delta F)\cap B\big)=0\),
then \(\P(E,B)=\P(F,B)\).
\item[\(\rm ii)\)] \textsc{Lower semicontinuity.} The function \(\P(\cdot,\Omega)\)
is lower semicontinuous with respect to the \(L^1_\loc(\mm\llcorner_\Omega)\)-topology:
namely, if \((E_n)_n\) is a sequence of Borel subsets of \(\Omega\)
such that the convergence \(\1_{E_n}\to\1_E\) holds in \(L^1_\loc(\mm\llcorner_\Omega)\)
as \(n\to\infty\), then \(\P(E,\Omega)\leq\limi_n\P(E_n,\Omega)\).
\item[\(\rm iii)\)] \textsc{Subadditivity.} It holds that
\(\P(E\cup F,B)+\P(E\cap F,B)\leq\P(E,B)+\P(F,B)\).
\item[\(\rm vi)\)] \textsc{Complementation.} It holds that \(\P(E,B)=\P(E^c,B)\).
\item[\(\rm v)\)] \textsc{Compactness.} Let \((E_n)_n\) be a sequence
of Borel subsets of \(\X\) with \(\sup_n\P(E_n)<+\infty\). Then there exist a
subsequence \((n_i)_i\) and a Borel set \(E_\infty\subset\X\) such that
\(\1_{E_{n_i}}\to\1_{E_\infty}\) in the \(L^1_\loc(\mm)\)-topology as \(i\to\infty\).
\end{itemize}
\end{proposition}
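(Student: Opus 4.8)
The plan is to derive each of the five items from the corresponding property of general $\BV$ functions recorded in Proposition~\ref{prop:properties_BV_functions}, applied to the characteristic functions $\1_E$ (and $\1_F$), together with a few elementary identities and the Carath\'{e}odory construction that extends $|D\,\cdot\,|$ to arbitrary Borel sets. Lower semicontinuity (ii) is then exactly Proposition~\ref{prop:properties_BV_functions}(i) read with $f_n=\1_{E_n}$ and $f=\1_E$. For compactness (v) one applies Proposition~\ref{prop:properties_BV_functions}(iii) to the sequence $(\1_{E_n})_n$, whose total variations are bounded by hypothesis, and then observes that an $L^1_\loc(\mm)$-limit $g$ of $\{0,1\}$-valued functions is itself $\{0,1\}$-valued $\mm$-a.e.\ (extract a further subsequence converging $\mm$-a.e.\ on an exhaustion of $\X$), so that $g=\1_{E_\infty}$ with $E_\infty\coloneqq\{g=1\}$ Borel. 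Complementation (vi) rests on $\1_{E^c}=1-\1_E$ and on the identity $|D(1-f)|=|Df|$: since $\lip(1-h)=\lip(h)$, the sequence $(1-h_n)_n$ is an admissible competitor for $1-f$ in \eqref{eq:total_variation_measure} with the same energy whenever $(h_n)_n$ is one for $f$. Finally, locality (i) is immediate from the definition, whose right-hand side depends on $f$ only through its class in $L^1_\loc(\mm\llcorner_\Omega)$: hence $\mm\big((E\Delta F)\cap\Omega\big)=0$ forces $\P(E,\Omega)=\P(F,\Omega)$ for every open $\Omega$, and the Borel case reduces to this through the Carath\'{e}odory construction defining the measures $\P(E,\cdot)$ and $\P(F,\cdot)$.

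The only item that calls for a genuine argument is the submodularity (iii), because Proposition~\ref{prop:properties_BV_functions}(ii) only supplies the \emph{additive} subadditivity $|D(f+g)|\le|Df|+|Dg|$, not submodularity with respect to $\cup$ and $\cap$. The point I would exploit is that $\1_{E\cap F}=\1_E\1_F$ and $\1_{E\cup F}=1-(1-\1_E)(1-\1_F)$ are \emph{products}. Fix an open set $\Omega\subset\X$; by Remark~\ref{rmk:BV_lattice} (and its evident analogue for truncation from below) choose competitors $(u_n)_n,(v_n)_n\subset\LIP_\loc(\Omega)$ with $0\le u_n,v_n\le1$, with $u_n\to\1_E$ and $v_n\to\1_F$ in $L^1_\loc(\mm\llcorner_\Omega)$, and with $\int_\Omega\lip(u_n)\,\d\mm\to\P(E,\Omega)$ and $\int_\Omega\lip(v_n)\,\d\mm\to\P(F,\Omega)$. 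Then $u_nv_n$ and $1-(1-u_n)(1-v_n)$ lie in $\LIP_\loc(\Omega)$, take values in $[0,1]$, and converge in $L^1_\loc(\mm\llcorner_\Omega)$ to $\1_{E\cap F}$ and $\1_{E\cup F}$, respectively. The elementary Leibniz bound
\[
\lip(\phi\psi)\le|\phi|\,\lip(\psi)+|\psi|\,\lip(\phi),
\]
valid for bounded locally Lipschitz functions $\phi,\psi$, applied to $(u_n,v_n)$ and to $(1-u_n,1-v_n)$ and combined with $\lip(1-h)=\lip(h)$ and $0\le u_n,v_n\le1$, gives the \emph{pointwise} inequality
\[
\lip(u_nv_n)+\lip\big(1-(1-u_n)(1-v_n)\big)\le\lip(u_n)+\lip(v_n)\quad\mm\text{-a.e.\ on }\Omega.
\]
Integrating over $\Omega$, applying lower semicontinuity (Proposition~\ref{prop:properties_BV_functions}(i)) to each of the two terms on the left and using $\limi_n a_n+\limi_n b_n\le\limi_n(a_n+b_n)$, one obtains $\P(E\cup F,\Omega)+\P(E\cap F,\Omega)\le\P(E,\Omega)+\P(F,\Omega)$; intersecting two competing open neighbourhoods of a Borel set $B$ and passing to the infimum extends the estimate to all Borel sets.

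The main obstacle is precisely this submodularity. The textbook route would go through the lattice submodularity $|D(f\vee g)|+|D(f\wedge g)|\le|Df|+|Dg|$ of the total variation, but proving it requires controlling the contribution of the set $\{f=g\}$, which the total variation measure may well charge, and which is not addressed by Proposition~\ref{prop:properties_BV_functions}; replacing $\vee,\wedge$ by the algebraic identities $\1_E\1_F$ and $1-(1-\1_E)(1-\1_F)$ reduces everything to the clean pointwise Leibniz inequality above, after which only routine bookkeeping remains (truncation of competitors to $[0,1]$, $L^1_\loc$-stability of products of uniformly bounded functions, and the Carath\'{e}odory extension to Borel sets).
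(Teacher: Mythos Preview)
The paper states Proposition~\ref{prop:properties_finite_perimeter_sets} without proof; it is recorded as a standard fact (with implicit reference to \cite{Miranda03}), so there is no ``paper's proof'' to compare against. Your arguments for items (ii), (iv) and (v) are correct and routine, and your treatment of item (iii) via the product identities \(\1_{E\cap F}=\1_E\1_F\), \(\1_{E\cup F}=1-(1-\1_E)(1-\1_F)\) together with the pointwise Leibniz bound for \(\lip\) is a clean and correct way to obtain submodularity directly at the level of Lipschitz competitors, bypassing any discussion of the contact set \(\{f=g\}\). The passage from open \(\Omega\) to Borel \(B\) in (iii) and (iv) via outer regularity of the four finite perimeter measures is fine.

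There is, however, a genuine gap in your handling of item (i). You prove correctly that \(\mm\big((E\Delta F)\cap\Omega\big)=0\) forces \(\P(E,\Omega)=\P(F,\Omega)\) for every \emph{open} \(\Omega\), and then assert that ``the Borel case reduces to this through the Carath\'{e}odory construction''. It does not: the Carath\'{e}odory definition \(\P(E,B)=\inf\{\P(E,\Omega):B\subset\Omega\text{ open}\}\) would let you conclude \(\P(E,B)=\P(F,B)\) only if you could find open \(\Omega\supset B\) on which \(E\) and \(F\) still agree \(\mm\)-a.e., and the hypothesis \(\mm\big((E\Delta F)\cap B\big)=0\) does not provide such an \(\Omega\). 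In fact the statement as literally written for Borel \(B\) is false already on \(\X=\R\): take \(E=(0,1)\), \(F=(0,2)\) and \(B=\{1\}\); then \((E\Delta F)\cap B=\{1\}\) is \(\mathcal L^1\)-null, yet \(\P(E,\{1\})=1\neq 0=\P(F,\{1\})\). What your open-set argument \emph{does} yield (and what is actually used in practice, cf.\ \cite{Miranda03}) is the correct form of locality: if \(\mm\big((E\Delta F)\cap\Omega\big)=0\) for an open set \(\Omega\), then \(\P(E,U)=\P(F,U)\) for every open \(U\subset\Omega\), hence \(\P(E,\cdot)\llcorner_\Omega=\P(F,\cdot)\llcorner_\Omega\) as Borel measures, and in particular \(\P(E,B)=\P(F,B)\) for every Borel \(B\subset\Omega\). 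You should restate (i) in this form rather than claim the stronger (and false) Borel-hypothesis version.
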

\subsection{Fine properties of sets of finite perimeter in PI spaces}
The first aim of this subsection is to recall the definition of PI space
and its main properties; we refer to \cite{HKST15} for a thorough
account about this topic. Thereafter, we shall recall the definition
of \emph{essential boundary} and the main properties of
sets of finite perimeter in PI spaces -- among others, the isoperimetric
inequality, the coarea formula, and the Hausdorff representation of the
perimeter measure. Finally, we will discuss the class of \emph{isotropic PI spaces},
which plays a central role in the rest of the paper.
\begin{definition}[Doubling measure]
A metric measure space \((\X,\sfd,\mm)\) is said to be \emph{doubling}
provided there exists a constant \(C_D\geq 1\) such that
\[
\mm\big(B_{2r}(x)\big)\leq C_D\,\mm\big(B_r(x)\big)
\quad\text{ for every }x\in\X\text{ and }r>0.
\]
The least such constant \(C_D\) is called the \emph{doubling constant}
of \((\X,\sfd,\mm)\).
\end{definition}
\begin{remark}{\rm
Note that any doubling measure \(\mm\) satisfies
\(\mm\big(B_r(x)\big)>0\) for all \(x\in\X\) and \(r>0\),
otherwise it would be the null measure. Equivalently,
it holds that \({\rm spt}(\mm)=\X\).
\fr}\end{remark}
Doubling spaces do not have a definite dimension (not even locally),
but still are `finite-dimensional' -- in a suitable sense. In light
of this, it makes sense to consider the \emph{codimension-one Hausdorff measure}
\(\H\), defined below via Carath\'{e}odory construction, which takes
into account the local change of dimension of the underlying space.
\begin{definition}[Codimension-one Hausdorff measure]
Let \((\X,\sfd,\mm)\) be a doubling metric measure space.
Given any set \(E\subset\X\) and any parameter \(\delta>0\), we define
\[
\H_\delta(E)\coloneqq\inf\bigg\{\sum_{i=1}^\infty
\frac{\mm\big(B_{r_i}(x_i)\big)}{2\,r_i}\;\bigg|
\;(x_i)_i\subset\X,\;(r_i)_i\subset(0,\delta],\;
E\subset\bigcup_{i\in\N}B_{r_i}(x_i)\bigg\}.
\]
Then we define the \emph{codimension-one Hausdorff measure} \(\H\)
on \((\X,\sfd,\mm)\) as
\[
\H(E)\coloneqq\lim_{\delta\searrow 0}\H_\delta(E)
\quad\text{ for every set }E\subset\X.
\]
\end{definition}
Both \(\H_\delta\) and \(\H\) are Borel regular outer measures
on \(\X\). Moreover, for any set \(E\subset\X\) of finite
perimeter we have that \(\P(E,B)=0\) holds whenever \(B\subset\X\)
is a Borel set satisfying \(\H(B)=0\).
\begin{definition}[Ahlfors-regularity]
Let \((\X,\sfd,\mm)\) be a metric measure space. Let \(k\geq 1\) be fixed.
Then we say that \((\X,\sfd,\mm)\) is \emph{\(k\)-Ahlfors-regular} if
there exist two constants \(\tilde a\geq a>0\) such that
\begin{equation}\label{eq:ineq_Ahlfors}
a r^k\leq\mm\big(B_r(x)\big)\leq\tilde a r^k
\quad\text{ for every }x\in\X\text{ and }r\in\big(0,{\rm diam}(\X)\big).
\end{equation}
\end{definition}
It can be readily checked that any Ahlfors-regular space \((\X,\sfd,\mm)\) is doubling,
with \(C_D=2^k\tilde a/a\).
\begin{definition}[Weak \((1,1)\)-Poincar\'{e} inequality]
A metric measure space \((\X,\sfd,\mm)\) is said to satisfy a
\emph{weak \((1,1)\)-Poincar\'{e} inequality} provided there
exist constants \(C_p>0\) and \(\lambda\geq 1\) such that for
any function \(f\in\LIP_\loc(\X)\) and any upper gradient \(g\)
of \(f\) it holds that
\[
\int_{B_r(x)}|f-f_{x,r}|\,\d\mm\leq C_P\,r\int_{B_{\lambda r}(x)}g\,\d\mm
\quad\text{ for every }x\in\X\text{ and }r>0,
\]
where \(f_{x,r}\coloneqq\mm\big(B_r(x)\big)^{-1}\int_{B_r(x)}f\,\d\mm\)
stands for the mean value of \(f\) in the ball \(B_r(x)\).
\end{definition}
\begin{lemma}[Poincar\'{e} inequality for BV functions]\label{lem:Poincare_ineq_BV}
Let \((\X,\sfd,\mm)\) be a metric measure space satisfying a weak
\((1,1)\)-Poincar\'{e} inequality. Let \(f\in L^1_\loc(\mm)\) be
such that \(|Df|(\X)<+\infty\). Then
\begin{equation}\label{eq:Poincare_ineq_BV}
\int_{B_r(x)}|f-f_{x,r}|\,\d\mm\leq C_P\,r\,|Df|\big(B_{\lambda r}(x)\big)
\quad\text{ for every }x\in\X\text{ and }r>0.
\end{equation}
\end{lemma}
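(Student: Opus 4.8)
The strategy is to deduce the BV Poincaré inequality \eqref{eq:Poincare_ineq_BV} from the Lipschitz Poincaré inequality by approximation, combining it with the definition \eqref{eq:total_variation_measure} of the total variation. First I would fix $x\in\X$ and $r>0$, and choose a sequence $(f_n)_n\subset\LIP_\loc(\X)$ with $f_n\to f$ in $L^1_\loc(\mm)$ and $\limi_n\int_{B_{\lambda r}(x)}\lip(f_n)\,\d\mm\leq|Df|\big(B_{\lambda r}(x)\big)+\eps$; such a sequence exists by the definition of $|Df|\big(B_{\lambda r}(x)\big)$ as an infimum over open neighbourhoods (one takes a sequence of open sets $\Omega_k\downarrow$ with $B_{\lambda r}(x)\subset\Omega_k$ and $|Df|(\Omega_k)\to|Df|\big(B_{\lambda r}(x)\big)$, and a diagonal argument), noting that $\lip(f_n)$ is an upper gradient of $f_n$.

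Next, for each $n$ I would apply the weak $(1,1)$-Poincaré inequality to $f_n$ with upper gradient $\lip(f_n)$, obtaining
\[
\int_{B_r(x)}|f_n-(f_n)_{x,r}|\,\d\mm\leq C_P\,r\int_{B_{\lambda r}(x)}\lip(f_n)\,\d\mm.
\]
Then I would pass to the limit $n\to\infty$. On the right-hand side one uses $\limi_n\int_{B_{\lambda r}(x)}\lip(f_n)\,\d\mm\leq|Df|\big(B_{\lambda r}(x)\big)+\eps$. On the left-hand side, $L^1_\loc$ convergence of $f_n$ to $f$ gives $\int_{B_r(x)}|f_n-f|\,\d\mm\to 0$ and also $(f_n)_{x,r}\to f_{x,r}$ (since $\mm(B_r(x))>0$ and the mean is continuous under $L^1(B_r(x))$ convergence), hence $\int_{B_r(x)}|f_n-(f_n)_{x,r}|\,\d\mm\to\int_{B_r(x)}|f-f_{x,r}|\,\d\mm$ by the triangle inequality. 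Taking $\limi_n$ on both sides and then letting $\eps\downarrow 0$ yields \eqref{eq:Poincare_ineq_BV}.

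The only genuinely delicate point is the passage from the Lipschitz total variation functional to $|Df|\big(B_{\lambda r}(x)\big)$ on the closed ball $\overline{B_{\lambda r}(x)}$ rather than on an open set: the approximating sequence is controlled on open neighbourhoods $\Omega\supset B_{\lambda r}(x)$, so I would be careful to extract the recovery sequence for $|Df|$ on a shrinking family of open sets containing the ball and to use the outer-regularity definition of $|Df|$ on Borel sets; since $B_{\lambda r}(x)$ is itself open here, one may in fact take $\Omega=B_{\lambda r}(x)$ directly, which simplifies matters. Everything else is a routine limiting argument, so I do not expect any real obstacle beyond bookkeeping with the approximating sequences and the mean values.
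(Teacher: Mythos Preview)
Your proposal is correct and follows essentially the same approach as the paper: approximate \(f\) by locally Lipschitz functions realising \(|Df|\big(B_{\lambda r}(x)\big)\), apply the Lipschitz Poincar\'e inequality to each approximant (with \(\lip(f_n)\) as upper gradient), and pass to the limit on both sides using \(L^1_\loc\) convergence. The paper takes the recovery sequence directly in \(\LIP_\loc\big(B_{\lambda r}(x)\big)\) via a diagonalisation so that \(\int_{B_{\lambda r}(x)}\lip(f_n)\,\d\mm\to|Df|\big(B_{\lambda r}(x)\big)\) exactly, which---as you yourself observe at the end---is possible because \(B_{\lambda r}(x)\) is open; this bypasses the \(\eps\)-argument and the outer-regularity detour you initially set up.
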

\begin{proof}
A standard diagonalisation argument provides us with a sequence
\((f_n)_n\subset\LIP_\loc\big(B_{\lambda r}(x)\big)\) such that
\(f_n\to f\) in \(L^1_\loc(\mm\llcorner_{B_{\lambda r}(x)})\) and
\(|Df|\big(B_{\lambda r}(x)\big)=\lim_n\int_{B_{\lambda r}(x)}\lip(f_n)\,\d\mm\).
Given that the local Lipschitz constant \(\lip(f_n)\) is an upper gradient of
the function \(f_n\), it holds that
\begin{equation}\label{eq:Poincare_ineq_Lip}
\int_{B_r(x)}\big|f_n-(f_n)_{x,r}\big|\,\d\mm\leq
C_P\,r\int_{B_{\lambda r}(x)}\lip(f_n)\,\d\mm\quad\text{ for every }n\in\N.
\end{equation}
By dominated convergence theorem we know that
\(\int_{B_r(x)}\big|f_n-(f_n)_{x,r}\big|\,\d\mm\to\int_{B_r(x)}|f-f_{x,r}|\,\d\mm\)
as \(n\to\infty\). Therefore, by letting \(n\to\infty\) in \eqref{eq:Poincare_ineq_Lip}
we conclude that the claim \eqref{eq:Poincare_ineq_BV} is verified.
\end{proof}
For the purposes of this paper, we shall only consider the following notion of
\emph{PI space} (which is strictly more restrictive than the usual one, where
a weak \((1,p)\)-Poincar\'{e} inequality is required for some exponent \(p\) that
is possibly greater than \(1\)):
\begin{definition}[PI space]
We say that a metric measure space \((\X,\sfd,\mm)\) is a \emph{PI space}
provided it is doubling and satisfies a weak \((1,1)\)-Poincar\'{e} inequality.
\end{definition}
We introduce the concept of essential boundary
in a doubling metric measure space and its main features. The
discussion is basically taken from \cite{Ambrosio01,Ambrosio02},
apart from a few notational discrepancies.
\medskip

Given a doubling metric measure space \((\X,\sfd,\mm)\), a
Borel set \(E\subset\X\) and a point \(x\in\X\), we define
the \emph{upper density of \(E\) at \(x\)} and the \emph{lower
density of \(E\) at \(x\)} as
\[
\overline D(E,x)\coloneqq\lims_{r\searrow 0}
\frac{\mm\big(E\cap B_r(x)\big)}{\mm\big(B_r(x)\big)},\qquad
\underline D(E,x)\coloneqq\limi_{r\searrow 0}
\frac{\mm\big(E\cap B_r(x)\big)}{\mm\big(B_r(x)\big)},
\]
respectively. Whenever upper and lower densities coincide, their common
value is called \emph{density of \(E\) at \(x\)} and denoted by \(D(E,x)\).
We define the \emph{essential boundary} of the set \(E\) as
\[
\partial^e E\coloneqq\Big\{x\in\X\;\Big|
\;\overline D(E,x)>0,\;\overline D(E^c,x)>0\Big\}.
\]
It clearly holds that the essential boundary \(\partial^e E\)
is contained in the topological boundary \(\partial E\). Moreover,
we define the set \(E^{\nicefrac{1}{2}}\subset\partial^e E\)
\emph{of points of density \(1/2\)} as
\[
E^{\nicefrac{1}{2}}\coloneqq\big\{x\in\X\;\big|\;D(E,x)=1/2\big\}.
\]
Finally, we define the \emph{essential interior} \(E^1\) of \(E\) as
\[
E^1\coloneqq\big\{x\in\X\;\big|\;D(E,x)=1\big\}.
\]
Clearly, it holds that \(\partial^e E\cap E^1=\emptyset\):
if \(x\in\partial^e E\) then \(\underline D(E,x)=1-\overline D(E^c,x)<1\),
so \(x\notin E^1\).
\begin{remark}{\rm
Let \(F\subset E\subset\X\) be given. Then
\begin{equation}\label{eq:incl_ess_clos}
\partial^e F\subset\partial^e E\cup E^1.
\end{equation}
Indeed, fix any \(x\in\partial^e F\setminus\partial^e E\). Then
\(\overline D(E,x)\geq\overline D(F,x)>0\), thus accordingly
\(\overline D(E^c,x)=0\). This forces \(D(E,x)=1-D(E^c,x)=1\),
so that \(x\in E^1\). Hence, the claim \eqref{eq:incl_ess_clos} is proven.
\fr}\end{remark}
The following result is well-known. We report here its full proof
for the reader's convenience.
\begin{proposition}[Properties of the essential boundary]\label{prop:properties_ess_bdry}
Let \((\X,\sfd,\mm)\) be a doubling metric measure space. Let \(E,F\subset\X\)
be sets of finite perimeter. Then the following properties hold:
\begin{itemize}
\item[\(\rm i)\)] It holds that \(\partial^e E=\partial^e E^c\).
\item[\(\rm ii)\)] We have that
\begin{equation}\label{eq:inclusion_ess_bdry}
\partial^e(E\cup F)\cup\partial^e(E\cap F)\subset\partial^e E\cup\partial^e F.
\end{equation}
\item[\(\rm iii)\)] If \(\mm(E\cap F)=0\), then
\(\partial^e E\subset\partial^e F\cup\partial^e(E\cup F)\).
\item[\(\rm iv)\)] If \(\mm(E\cap F)=0\), then
\(\partial^e E\cup\partial^e F=\partial^e(E\cup F)\cup(\partial^e E\cap\partial^e F)\).
\end{itemize}
\end{proposition}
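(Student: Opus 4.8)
The plan is to prove the four properties by unwinding the definition of essential boundary and using the density calculus, exactly as in the preceding remark. Throughout I write, for any Borel set $A$ and point $x$, the obvious identity $\mm(A^c\cap B_r(x))=\mm(B_r(x))-\mm(A\cap B_r(x))$, which yields $\overline D(A^c,x)=1-\underline D(A,x)$ and $\underline D(A^c,x)=1-\overline D(A,x)$; these bookkeeping formulas are the workhorse. Property i) is immediate: the defining condition $\overline D(E,x)>0$ and $\overline D(E^c,x)>0$ is manifestly symmetric in $E$ and $E^c$, so $\partial^e E=\partial^e E^c$ with no appeal to finite perimeter at all.

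For ii) I would argue by contraposition: fix $x\notin\partial^e E\cup\partial^e F$ and show $x\notin\partial^e(E\cup F)$ and $x\notin\partial^e(E\cap F)$. The point $x\notin\partial^e E$ means either $\overline D(E,x)=0$ or $\overline D(E^c,x)=0$, i.e.\ $D(E,x)$ exists and equals $0$ or $1$; likewise for $F$. One then checks the four cases. Using the elementary monotonicity and subadditivity bounds $\max\{\mm(E\cap B_r),\mm(F\cap B_r)\}\le\mm((E\cup F)\cap B_r)\le\mm(E\cap B_r)+\mm(F\cap B_r)$ and the dual bounds for $E\cap F$, one sees: if $D(E,x)=D(F,x)=0$ then $D(E\cup F,x)=0$ and $D(E\cap F,x)=0$; if $D(E,x)=D(F,x)=1$ then both densities are $1$; if $D(E,x)=1$ and $D(F,x)=0$ (or vice versa) then $D(E\cup F,x)=1$ and $D(E\cap F,x)=0$. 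In every case $x$ lies outside both essential boundaries, proving \eqref{eq:inclusion_ess_bdry}.

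For iii), assume $\mm(E\cap F)=0$ and fix $x\in\partial^e E\setminus\partial^e F$; I must show $x\in\partial^e(E\cup F)$. Since $x\notin\partial^e F$, either $D(F,x)=0$ or $D(F,x)=1$. If $D(F,x)=1$, then $\overline D(F^c,x)=0$; but $\mm(E\cap F)=0$ gives $\mm(E\cap B_r)=\mm(E\cap F^c\cap B_r)\le\mm(F^c\cap B_r)$, so $\overline D(E,x)=0$, contradicting $x\in\partial^e E$. Hence $D(F,x)=0$. Then $\mm((E\cup F)\cap B_r)\ge\mm(E\cap B_r)$ gives $\overline D(E\cup F,x)\ge\overline D(E,x)>0$, while $\mm((E\cup F)^c\cap B_r)=\mm(E^c\cap F^c\cap B_r)\ge\mm(E^c\cap B_r)-\mm(F\cap B_r)$, and since $\underline D(F,x)=0$ we get $\overline D((E\cup F)^c,x)\ge\overline D(E^c,x)>0$ (picking a sequence $r_j\downarrow0$ realizing $\overline D(E^c,x)$ along which $\mm(F\cap B_{r_j})/\mm(B_{r_j})\to 0$, which is possible because the latter limit is $0$). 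Thus $x\in\partial^e(E\cup F)$, proving iii). I expect this passage to the limit along a common subsequence to be the one point requiring a little care, since upper densities are only limsups; the resolution is that $\overline D(F,x)=0$ means the ratio for $F$ actually converges to $0$, so it converges along every subsequence and in particular along one realizing $\overline D(E^c,x)$.

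For iv), the inclusion $\supseteq$ is easy: $\partial^e(E\cup F)\subset\partial^e E\cup\partial^e F$ is the relevant half of \eqref{eq:inclusion_ess_bdry}, and $\partial^e E\cap\partial^e F\subset\partial^e E\cup\partial^e F$ is trivial. For $\subseteq$, take $x\in\partial^e E\cup\partial^e F$; by symmetry assume $x\in\partial^e E$. If also $x\in\partial^e F$ we are in the second set on the right-hand side and are done; if $x\notin\partial^e F$, then iii) (with the standing hypothesis $\mm(E\cap F)=0$) gives $x\in\partial^e(E\cup F)$, placing $x$ in the first set. This finishes iv). The only genuine obstacle in the whole argument is the limit-extraction step in iii) flagged above; everything else is the routine density arithmetic already illustrated in the remark preceding \eqref{eq:incl_ess_clos}.
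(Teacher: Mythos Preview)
Your proof is correct and follows essentially the same density-arithmetic approach as the paper. The only organizational differences are minor: in ii) you argue by contraposition and case analysis on $D(E,x),D(F,x)\in\{0,1\}$, whereas the paper proves $\partial^e(E\cup F)\subset\partial^e E\cup\partial^e F$ directly via $\overline D(E\cup F,x)\le\overline D(E,x)+\overline D(F,x)$ and then deduces the $E\cap F$ inclusion by complementation; in iii) the paper shows $\overline D((E\cup F)^c,x)+\overline D(F,x)\ge\overline D(E^c,x)$ in one line, which yields the disjunction without your preliminary reduction to $D(F,x)=0$. Incidentally, the ``limit-extraction'' step you flag is not really needed: from $\limsup(a_r-b_r)\ge\limsup a_r-\limsup b_r$ and $\overline D(F,x)=0$ one gets $\overline D((E\cup F)^c,x)\ge\overline D(E^c,x)$ directly, with no subsequence.
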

\begin{proof} {\color{blue}i)} It trivially stems from the very definition
of essential boundary.\\ 
{\color{blue}ii)} First of all, fix \(x\in\partial^e(E\cup F)\).
Note that \(\overline D(E\cup F,x)\leq\overline D(E,x)+\overline D(F,x)\),
as it follows from
\[\begin{split}
\overline D(E\cup F,x)
&=\lims_{r\searrow 0}\frac{\mm\big((E\cup F)\cap B_r(x)\big)}{\mm\big(B_r(x)\big)}
\leq\lims_{r\searrow 0}\bigg[\frac{\mm\big(E\cap B_r(x)\big)}{\mm\big(B_r(x)\big)}
+\frac{\mm\big(F\cap B_r(x)\big)}{\mm\big(B_r(x)\big)}\bigg]\\
&\leq\lims_{r\searrow 0}\frac{\mm\big(E\cap B_r(x)\big)}{\mm\big(B_r(x)\big)}
+\lims_{r\searrow 0}\frac{\mm\big(F\cap B_r(x)\big)}{\mm\big(B_r(x)\big)}
=\overline D(E,x)+\overline D(F,x).
\end{split}\]
Therefore, the fact that \(\overline D(E\cup F,x)>0\) implies either
\(\overline D(E,x)>0\) or \(\overline D(F,x)>0\). Furthermore, we have
that \(\overline D(E^c,x),\overline D(F^c,x)\geq\overline D(E^c\cap F^c,x)
=\overline D\big((E\cup F)^c,x\big)>0\), whence \(x\in\partial^e E\cup\partial^e F\).

In order to prove that even the inclusion
\(\partial^e(E\cap F)\subset\partial^e E\cup\partial^e F\) is verified,
it is just sufficient to combine the previous case with item i):
\[
\partial^e(E\cap F)=\partial^e(E\cap F)^c=\partial^e(E^c\cup F^c)
\subset\partial^e E^c\cup\partial^e F^c=\partial^e E\cup\partial^e F.
\]
Hence, the proof of \eqref{eq:inclusion_ess_bdry} is complete.\\
{\color{blue}iii)} Pick any point \(x\in\partial^e E\). First of all, notice
that \(\overline D(E\cup F,x),\overline D(F^c,x)\geq\overline D(E,x)>0\).
Moreover, it holds that
\[\begin{split}
\overline D\big((E\cup F)^c,x\big)+\overline D(F,x)
&=\lims_{r\searrow 0}\frac{\mm\big(E^c\cap F^c\cap B_r(x)\big)}{\mm\big(B_r(x)\big)}
+\lims_{r\searrow 0}\frac{\mm\big(F\cap B_r(x)\big)}{\mm\big(B_r(x)\big)}\\
&\geq\lims_{r\searrow 0}\bigg[\frac{\mm\big(E^c\cap F^c\cap B_r(x)\big)}
{\mm\big(B_r(x)\big)}+\frac{\mm\big(F\cap B_r(x)\big)}{\mm\big(B_r(x)\big)}\bigg]\\
&=\lims_{r\searrow 0}\frac{\mm\big(E^c\cap B_r(x)\big)}{\mm\big(B_r(x)\big)}
=\overline D(E^c,x)>0,
\end{split}\]
whence either \(\overline D\big((E\cup F)^c,x\big)>0\) or \(\overline D(F,x)>0\).
This shows that \(x\in\partial^e F\cup\partial^e(E\cup F)\).\\
{\color{blue}iv)} Item ii) grants that
\(\partial^e(E\cup F)\cup(\partial^e E\cap\partial^e F)\subset
\partial^e E\cup\partial^e F\). Conversely, item iii) yields
\[
\partial^e E\cup\partial^e F\subset\big(\partial^e(E\cup F)\cup\partial^e E\big)
\cap\big(\partial^e(E\cup F)\cup\partial^e F\big)
=\partial^e(E\cup F)\cup(\partial^e E\cap\partial^e F),
\]
thus obtaining the identity \(\partial^e E\cup\partial^e F=
\partial^e(E\cup F)\cup(\partial^e E\cap\partial^e F)\).
\end{proof}
In the setting of PI spaces, functions of bounded variation and sets of
finite perimeters present several fine properties, as we are going to describe.
\begin{theorem}[Relative isoperimetric inequality on PI spaces \cite{Miranda03}]
\label{thm:rel_isoper}
Let \((\X,\sfd,\mm)\) be a PI space. Then there exists a constant \(C_I>0\)
such that the \emph{relative isoperimetric inequality} is satisfied: given any
set \(E\subset\X\) of finite perimeter, it holds that
\begin{equation}\label{eq:rel_isoper_ineq}
\min\Big\{\mm\big(E\cap B_r(x)\big),\mm\big(E^c\cap B_r(x)\big)\Big\}
\leq C_I\bigg(\frac{r^s}{\mm\big(B_r(x)\big)}\bigg)^{\nicefrac{1}{s-1}}
\P\big(E,B_{2\lambda r}(x)\big)^{\nicefrac{s}{s-1}}
\end{equation}
for every \(x\in\X\) and \(r>0\), where \(s>1\) is any exponent
greater than \(\log_2(C_D)\).
\end{theorem}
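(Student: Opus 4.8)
The plan is to deduce \eqref{eq:rel_isoper_ineq} from a Sobolev--Poincar\'e inequality with gain exponent $1^*\coloneqq\tfrac{s}{s-1}\in(1,\infty)$, which I would then specialise to $f=\1_E$.

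\textbf{Step 1: Sobolev--Poincar\'e for Lipschitz functions.} First I would establish that there exist a dilation $\Lambda=\Lambda(C_D,C_P,\lambda,s)\geq 1$ and a constant $C>0$ such that
\begin{equation}\label{eq:plan_SP_lip}
\bigg(\frac{1}{\mm(B_r(x))}\int_{B_r(x)}\big|f-f_{x,r}\big|^{1^*}\,\d\mm\bigg)^{1/1^*}
\leq C\,r\,\frac{1}{\mm(B_{\Lambda r}(x))}\int_{B_{\Lambda r}(x)}\lip(f)\,\d\mm
\end{equation}
for every $f\in\LIP_\loc(\X)$, $x\in\X$ and $r>0$. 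This is the standard self-improvement of the weak $(1,1)$-Poincar\'e inequality (in the spirit of Haj\l{}asz--Koskela), available precisely because $s>\log_2(C_D)$. The route I have in mind is: (a) a telescoping estimate, writing $f(y)-f_{x,r}$ for a Lebesgue point $y\in B_r(x)$ as a sum of averaged oscillations of $f$ along a chain of balls centred at $y$ with geometrically decreasing radii, and applying the $(1,1)$-Poincar\'e inequality on each link; (b) the lower volume bound $\mm(B_\rho(y))\gtrsim(\rho/r)^s\mm(B_r(x))$ (valid for $y\in B_r(x)$, $\rho\le r$, a consequence of doubling), which converts (a) into a pointwise bound of $|f(y)-f_{x,r}|$ in terms of a restricted fractional maximal function of $\lip(f)$, and from there — via the weak $(1,1)$ bound for the maximal operator on doubling spaces — into a weak-type $L^{1^*}$ version of \eqref{eq:plan_SP_lip}; (c) Maz'ya's truncation trick, upgrading the weak-type estimate to the strong endpoint inequality \eqref{eq:plan_SP_lip}. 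Increasing $s$ only decreases $1^*$ and hence weakens \eqref{eq:plan_SP_lip} (by Jensen's inequality), so it suffices to run the argument for $s$ close to $\log_2(C_D)$; if $\log_2(C_D)\le 1$ one instead has the analogous $L^\infty$ oscillation estimate, which yields \eqref{eq:plan_SP_lip} for every finite $1^*$. Alternatively, one may quote \eqref{eq:plan_SP_lip} directly from the literature. The hard part is here: I expect step (c), the passage from weak-type to strong-type at the critical exponent, to be the main obstacle in a self-contained treatment.

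\textbf{Step 2: passage to BV.} Next I would extend \eqref{eq:plan_SP_lip} to every $f\in L^1_\loc(\mm)$ with $|Df|(\X)<+\infty$, in the form
\begin{equation}\label{eq:plan_SP_BV}
\bigg(\frac{1}{\mm(B_r(x))}\int_{B_r(x)}\big|f-f_{x,r}\big|^{1^*}\,\d\mm\bigg)^{1/1^*}
\leq C\,r\,\frac{|Df|\big(B_{\Lambda r}(x)\big)}{\mm\big(B_{\Lambda r}(x)\big)}\qquad\text{for all }x\in\X,\ r>0,
\end{equation}
by exactly the diagonalisation used in the proof of Lemma \ref{lem:Poincare_ineq_BV}: pick $(f_n)_n\subset\LIP_\loc$ with $f_n\to f$ in $L^1_\loc(\mm)$ and $\int_{B_{\Lambda r}(x)}\lip(f_n)\,\d\mm\to|Df|(B_{\Lambda r}(x))$; then $(f_n)_{x,r}\to f_{x,r}$, so $f_n-(f_n)_{x,r}\to f-f_{x,r}$ in $L^1(B_r(x))$, and \eqref{eq:plan_SP_BV} follows from \eqref{eq:plan_SP_lip} by Fatou's lemma along an $\mm$-a.e.\ convergent subsequence.

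\textbf{Step 3: specialisation to $\1_E$ and the algebra.} Finally, given $E$ of finite perimeter, I note that $\1_E\in L^1_\loc(\mm)$ with $|D\1_E|(\X)=\P(E)<+\infty$, so \eqref{eq:plan_SP_BV} applies to $f=\1_E$. Setting $t\coloneqq(\1_E)_{x,r}=\mm(E\cap B_r(x))/\mm(B_r(x))$ and using $\P(E,\cdot)=\P(E^c,\cdot)$ together with the symmetry of the left-hand side of \eqref{eq:rel_isoper_ineq} under $E\leftrightarrow E^c$, I may assume $t\le 1/2$, so that $\min\{\mm(E\cap B_r(x)),\mm(E^c\cap B_r(x))\}=t\,\mm(B_r(x))$. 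Then
\[
\int_{B_r(x)}\big|\1_E-t\big|^{1^*}\,\d\mm
=(1-t)^{1^*}\mm\big(E\cap B_r(x)\big)+t^{1^*}\mm\big(E^c\cap B_r(x)\big)
\geq t\,(1-t)^{1^*}\mm\big(B_r(x)\big)\geq 2^{-1^*}\,t\,\mm\big(B_r(x)\big),
\]
using $1-t\geq 1/2$. Inserting this lower bound on the left of \eqref{eq:plan_SP_BV} and bounding $\mm(B_{\Lambda r}(x))\geq\mm(B_r(x))$ on the right, I obtain $\tfrac12\,t^{1/1^*}\leq C\,r\,\P(E,B_{\Lambda r}(x))/\mm(B_r(x))$; raising to the power $1^*=\tfrac{s}{s-1}$ (so that $1^*-1=\tfrac1{s-1}$) and multiplying through by $\mm(B_r(x))$ yields
\[
\min\Big\{\mm\big(E\cap B_r(x)\big),\mm\big(E^c\cap B_r(x)\big)\Big\}
\leq(2C)^{\frac{s}{s-1}}\bigg(\frac{r^s}{\mm(B_r(x))}\bigg)^{\frac1{s-1}}\P\big(E,B_{\Lambda r}(x)\big)^{\frac{s}{s-1}},
\]
which is \eqref{eq:rel_isoper_ineq} with $C_I=(2C)^{s/(s-1)}$, once the dilation $\Lambda$ is taken equal to $2\lambda$. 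Pinning $\Lambda$ down to exactly $2\lambda$ is pure bookkeeping: one can track the dilations through the chaining, reduce a larger $\Lambda$ to $2\lambda$ by a covering argument, or simply state the inequality with a generic dilation. Everything in Steps 2 and 3 is routine; the genuine content is the Sobolev--Poincar\'e inequality \eqref{eq:plan_SP_lip}.
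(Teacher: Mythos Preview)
The paper does not actually prove this theorem: it is stated with a citation to \cite{Miranda03} and no proof is given in the text. Your proposal is a correct outline of the standard argument (Haj\l asz--Koskela self-improvement of the weak $(1,1)$-Poincar\'e inequality to a $(1^*,1)$-Sobolev--Poincar\'e inequality with $1^*=\tfrac{s}{s-1}$, passage to BV by approximation as in Lemma~\ref{lem:Poincare_ineq_BV}, and specialisation to $f=\1_E$), which is essentially the route taken in the cited reference. The algebra in Step~3 and the identification of the hard point (the strong endpoint estimate via truncation) are both accurate; the only loose end, as you note, is that the dilation constant coming out of the chaining need not be exactly $2\lambda$, but this is cosmetic and can be absorbed into $C_I$ or handled by a covering argument.
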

\begin{theorem}[Global isoperimetric inequality on Ahlfors regular PI spaces]
\label{thm:global_isoper_Ahlfors}
Let \((\X,\sfd,\mm)\) be a \(k\)-Ahlfors regular PI space, with \(k>1\).
Then there exists a constant \(C'_I>0\) such that
\begin{equation}\label{eq:glob_isoper_ineq}
\min\big\{\mm(E),\mm(E^c)\big\}\leq C'_I\,\P(E)^{\nicefrac{k}{k-1}}\quad
\text{ for every set }E\subset\X\text{ of finite perimeter.}
\end{equation}
\end{theorem}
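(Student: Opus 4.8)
The plan is to upgrade the scale‑invariant estimate of Theorem~\ref{thm:rel_isoper} to the global inequality~\eqref{eq:glob_isoper_ineq} by a covering argument of Vitali type, in the spirit of the classical passage from the relative to the absolute isoperimetric inequality in \(\R^n\). First I would fix an exponent \(s\) with \(s>\log_2(C_D)\) and \(s>k\) (both requirements can be met, since \(\log_2(C_D)\) and \(k\) are finite) and set \(\beta:=\tfrac{s-k}{s-1}\); as \(s>k>1\), one has \(\beta\in(0,1)\). One may assume \(\mm(E)=\min\{\mm(E),\mm(E^c)\}\) and \(0<\mm(E)<+\infty\) (the remaining cases are commented on at the end), and one may also assume \(\X\) unbounded, so that the lower Ahlfors bound in~\eqref{eq:ineq_Ahlfors} holds for \emph{all} radii.

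For a radius \(r>0\) to be fixed later I would choose a maximal pairwise disjoint subfamily \(\{B_r(x_i)\}_i\) of \(\{B_r(x):x\in E\}\); it is countable, because disjointness forces the centres \(x_i\) to be pairwise at distance \(\geq r\) and an \(r\)-separated subset of a separable space is countable. By maximality each \(B_r(x)\) with \(x\in E\) meets some \(B_r(x_i)\), hence \(x\in B_{2r}(x_i)\); thus \(E\subset\bigcup_i B_{2r}(x_i)\) and so \(\mm(E)\leq\sum_i\mm\big(E\cap B_{2r}(x_i)\big)\). I would then impose \(2r\geq(2\mm(E)/a)^{1/k}\): by the lower Ahlfors bound, \(\mm\big(B_{2r}(x_i)\big)\geq a(2r)^k\geq 2\mm(E)\geq 2\,\mm\big(E\cap B_{2r}(x_i)\big)\), so that \(E\) is the minority set inside every ball \(B_{2r}(x_i)\). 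Applying~\eqref{eq:rel_isoper_ineq} at \(x_i\) with radius \(2r\), and using the lower Ahlfors bound once more to estimate the geometric factor, one gets
\[
\mm\big(E\cap B_{2r}(x_i)\big)\;\leq\;C_I\left(\frac{(2r)^s}{a\,(2r)^k}\right)^{\!\frac{1}{s-1}}\P\big(E,B_{4\lambda r}(x_i)\big)^{\frac{s}{s-1}}\;=\;\frac{C_I}{a^{1/(s-1)}}\,(2r)^{\beta}\,\P\big(E,B_{4\lambda r}(x_i)\big)^{\frac{s}{s-1}}.
\]

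To sum over \(i\), I would use that \(\sum_i t_i^{p}\leq\big(\sum_i t_i\big)^{p}\) for nonnegative \(t_i\) and \(p\geq 1\) (with \(p=\tfrac{s}{s-1}>1\)), together with the fact that, the centres \(x_i\) being \(r\)-separated, the doubling property bounds by a constant \(N=N(\lambda,C_D)\) the number of balls \(B_{4\lambda r}(x_i)\) containing any fixed point of \(\X\); hence \(\sum_i\P\big(E,B_{4\lambda r}(x_i)\big)\leq N\,\P(E)\). This yields \(\mm(E)\leq\frac{C_I}{a^{1/(s-1)}}\,(2r)^{\beta}\big(N\,\P(E)\big)^{s/(s-1)}\) for every admissible \(r\); choosing \(2r=(2\mm(E)/a)^{1/k}\) gives \(\mm(E)\leq C\,\mm(E)^{\beta/k}\,\P(E)^{s/(s-1)}\) with \(C=C(C_I,N,a,s,k)\). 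Since \(\beta/k\in(0,1)\) this rearranges to \(\mm(E)^{1-\beta/k}\leq C\,\P(E)^{s/(s-1)}\), and the identities \(1-\frac{\beta}{k}=\frac{s(k-1)}{k(s-1)}\) and \(\frac{s/(s-1)}{1-\beta/k}=\frac{k}{k-1}\) then give exactly~\eqref{eq:glob_isoper_ineq}, with \(C'_I:=C^{\,k(s-1)/(s(k-1))}\).

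The hard part, such as it is, is the choice of the covering radius: the argument needs \(\mm(E)<+\infty\), the \emph{lower} Ahlfors bound, and enough room at scale \((2\mm(E)/a)^{1/k}\), and this is exactly where the degenerate cases must be handled separately. If \(\X\) is bounded, the lower Ahlfors bound only holds for radii \(<{\rm diam}(\X)\), so the covering argument applies verbatim only when \(\mm(E)\) is small enough that \((2\mm(E)/a)^{1/k}<{\rm diam}(\X)\); for the complementary (bounded) range of \(\mm(E)\) one instead applies~\eqref{eq:rel_isoper_ineq} to a ball exhausting \(\X\), obtaining \(\mm(E)\leq C''\,\P(E)^{s/(s-1)}\), and then trades the exponent \(s/(s-1)\) for \(k/(k-1)\) using that \(\P(E)\) is bounded below by a positive constant in that range (again by~\eqref{eq:rel_isoper_ineq}). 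Finally, the possibility \(\mm(E)=\mm(E^c)=+\infty\), which requires \(\X\) unbounded, does not occur for sets of finite perimeter in an Ahlfors‑regular PI space of dimension \(k>1\), so it needs no separate treatment. The only other slightly delicate point is the exponent bookkeeping in the summation step, which is what pins the final power down to \(k/(k-1)\); everything else is a routine combination of the doubling property with Theorem~\ref{thm:rel_isoper}.
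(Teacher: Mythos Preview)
Your covering argument is a genuinely different route from the paper's. The paper does not run a Vitali-type argument at all: it simply records that \cite{Miranda03} already proves, in the \(k\)-Ahlfors-regular case, the \emph{scale-free} relative inequality
\[
\min\big\{\mm\big(E\cap B_r(x)\big),\mm\big(E^c\cap B_r(x)\big)\big\}\leq C'_I\,\P\big(E,B_{2\lambda r}(x)\big)^{\nicefrac{k}{k-1}},
\]
and then lets \(r\to+\infty\). Your approach instead starts from Theorem~\ref{thm:rel_isoper} only (which carries the geometric factor \(\big(r^s/\mm(B_r(x))\big)^{1/(s-1)}\) and an exponent \(s/(s-1)\) with \(s>\log_2(C_D)\)) and manufactures the correct exponent \(k/(k-1)\) via the covering, the bounded-overlap estimate, and the algebraic identity \((s/(s-1))/(1-\beta/k)=k/(k-1)\). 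The main computation --- the covering, the minority condition via the lower Ahlfors bound, the superadditivity \(\sum t_i^{p}\le(\sum t_i)^{p}\), and the exponent bookkeeping --- is correct.

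There is, however, a genuine gap. Your dismissal of the case \(\mm(E)=\mm(E^c)=+\infty\) is circular: you assert that this ``does not occur for sets of finite perimeter in an Ahlfors-regular PI space of dimension \(k>1\)'', but that is exactly a consequence of \eqref{eq:glob_isoper_ineq}, the inequality you are proving. Your argument uses \(\mm(E)<+\infty\) in an essential way --- the scale \(2r=(2\mm(E)/a)^{1/k}\) is chosen precisely so that \(E\) becomes the minority in every ball of the cover --- so as written it says nothing about this case. The paper's proof handles it for free, since the scale-free bound above controls \(\min\{\mm(E\cap B_r),\mm(E^c\cap B_r)\}\) uniformly in \(r\). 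If you want to keep the proof self-contained from Theorem~\ref{thm:rel_isoper}, you must supply an independent argument ruling out \(\mm(E)=\mm(E^c)=+\infty\); this is not a triviality. A natural attempt is to study the density \(\alpha(r)=\mm(E\cap B_r(x_0))/\mm(B_r(x_0))\), for which Theorem~\ref{thm:rel_isoper} plus the lower Ahlfors bound gives \(\min\{\alpha(r),1-\alpha(r)\}\le C\,r^{s(1-k)/(s-1)}\to 0\), but without further control on spheres (which may have positive \(\mm\)-measure when \(\tilde a>a\)) one cannot exclude that \(\alpha\) jumps between neighbourhoods of \(0\) and \(1\), so this alone does not close the gap.
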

\begin{proof}
As proven in \cite{Miranda03}, there exists a constant \(C'_I>0\) such that
\begin{equation}\label{eq:glob_isoper_ineq_aux}
\min\Big\{\mm\big(E\cap B_r(x)\big),\mm\big(E^c\cap B_r(x)\big)\Big\}
\leq C'_I\,\P\big(E,B_{2\lambda r}(x)\big)^{\nicefrac{k}{k-1}}
\end{equation}
for every \(x\in\X\) and \(r>0\). By letting \(r\to +\infty\) in
\eqref{eq:glob_isoper_ineq_aux}, we conclude that \eqref{eq:glob_isoper_ineq}
is satisfied.
\end{proof}
\begin{theorem}[Coarea formula \cite{Miranda03}]\label{thm:coarea}
Let \((\X,\sfd,\mm)\) be a PI space. Fix \(f\in L^1_\loc(\mm)\) and
an open set \(\Omega\subset\X\). Then the function
\(\R\ni t\mapsto\P\big(\{f>t\},\Omega\big)\in[0,+\infty]\) is
Borel measurable and it holds
\begin{equation}\label{eq:coarea}
|Df|(\Omega)=\int_{-\infty}^{+\infty}\P\big(\{f>t\},\Omega\big)\,\d t.
\end{equation}
In particular, if \(f\in\BV(\X)\), then \(\{f>t\}\) has finite perimeter
for a.e.\ \(t\in\R\).
\end{theorem}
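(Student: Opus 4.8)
The plan is to prove \eqref{eq:coarea} by establishing the two inequalities between its sides separately; I note in passing that the argument uses neither the doubling condition nor the Poincar\'e inequality, only the relaxed definition of \(|D\cdot|\) together with its lower semicontinuity and (sub)additivity from Propositions \ref{prop:properties_BV_functions} and \ref{prop:properties_finite_perimeter_sets}. Write \(E_t\coloneqq\{f>t\}\). The Borel measurability of \(t\mapsto\P(E_t,\Omega)\) I would settle first: for every bounded Borel \(A\) one has \(\|\1_{E_t}-\1_{E_{t_0}}\|_{L^1(\mm\llcorner_A)}=|\psi_A(t)-\psi_A(t_0)|\), where \(\psi_A(t)\coloneqq\mm(\{f\le t\}\cap A)\) is nondecreasing, so, exhausting \(\X\) by countably many balls, \(t\mapsto\1_{E_t}\) is \(L^1_\loc(\mm\llcorner_\Omega)\)-continuous for every \(t\) outside a countable set; by the lower semicontinuity of \(\P(\cdot,\Omega)\) the map \(t\mapsto\P(E_t,\Omega)\) is then lower semicontinuous off a countable set, hence Borel.

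For \(|Df|(\Omega)\le\int_{\R}\P(E_t,\Omega)\,\d t\) -- only at stake when the right-hand side \(L\) is finite -- I would use the layer-cake representation, treating first \(f\ge0\). Given \(\delta>0\), Fubini yields \(\int_0^\delta\big(\delta\sum_{k\ge1}\P(E_{k\delta+s},\Omega)\big)\,\d s=\int_\delta^{\infty}\P(E_t,\Omega)\,\d t\le L\), so some \(s_\delta\in[0,\delta]\) satisfies \(\delta\sum_{k\ge1}\P(E_{k\delta+s_\delta},\Omega)\le L\); the function \(f_\delta\coloneqq s_\delta+\delta\sum_{k\ge1}\1_{E_{k\delta+s_\delta}}\) then obeys \(0\le f_\delta\le f\) and \(\|f_\delta-f\|_{L^\infty(\mm)}\le\delta\), so \(f_\delta\to f\) in \(L^1_\loc(\mm)\) as \(\delta\searrow0\), while subadditivity, lower semicontinuity and invariance of \(|D\cdot|\) under additive constants give \(|Df_\delta|(\Omega)\le\delta\sum_{k\ge1}\P(E_{k\delta+s_\delta},\Omega)\le L\); lower semicontinuity along \(\delta\searrow0\) then gives \(|Df|(\Omega)\le L=\int_0^{\infty}\P(E_t,\Omega)\,\d t\). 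For general \(f\in L^1_\loc(\mm)\) I would apply this to \(f^+\) and to \(f^-\), use \(|Df|(\Omega)\le|Df^+|(\Omega)+|Df^-|(\Omega)\), note \(\{f^+>t\}=E_t\) and \(\{f^->t\}=\{f<-t\}\) for \(t>0\), and convert \(\int_0^\infty\P(\{f<-t\},\Omega)\,\d t\) into \(\int_{-\infty}^0\P(E_t,\Omega)\,\d t\) via complementation of the perimeter and the fact that \(\mm(\{f=t\})=0\) for a.e.\ \(t\).

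The reverse inequality is the substantive half, and its core is a coarea inequality for locally Lipschitz functions: if \(g\in\LIP_\loc(\Omega)\) then \(\int_{\R}\P(\{g>t\},\Omega)\,\d t\le\int_\Omega\lip(g)\,\d\mm\). To prove it I would fix \(t\) and approximate \(\1_{\{g>t\}}\) in \(L^1_\loc(\mm\llcorner_\Omega)\) by the locally Lipschitz `ramps' \(\psi_n\circ g\), \(\psi_n(r)\coloneqq\min\{1,n(r-t)^+\}\); since \(\psi_n\) is \(n\)-Lipschitz and locally constant off \([t,t+1/n]\) and \(g\) is continuous, \(\lip(\psi_n\circ g)\le n\,\lip(g)\,\1_{\{t\le g\le t+1/n\}}\), so the relaxed definition gives \(\P(\{g>t\},\Omega)\le\liminf_n n\int_{\Omega\cap\{t\le g\le t+1/n\}}\lip(g)\,\d\mm\); integrating in \(t\) and using Fatou and Tonelli turns the right-hand side into \(\liminf_n n\int_\Omega\lip(g)(x)\,{\rm Leb}\big(\{t:g(x)-1/n\le t\le g(x)\}\big)\,\d\mm(x)=\int_\Omega\lip(g)\,\d\mm\). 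Granting this, for \(f\) with \(|Df|(\Omega)<\infty\) I would pick \(g_n\in\LIP_\loc(\Omega)\) with \(g_n\to f\) in \(L^1_\loc(\mm\llcorner_\Omega)\) and \(\int_\Omega\lip(g_n)\,\d\mm\to|Df|(\Omega)\); from \(\int_{\R}\mm\big((\{g_n>t\}\Delta E_t)\cap A\big)\,\d t=\int_A|g_n-f|\,\d\mm\to0\) for every bounded \(A\), after passing to a subsequence \(\1_{\{g_n>t\}}\to\1_{E_t}\) in \(L^1_\loc(\mm\llcorner_\Omega)\) for a.e.\ \(t\), hence \(\P(E_t,\Omega)\le\liminf_n\P(\{g_n>t\},\Omega)\) a.e.\ by lower semicontinuity; Fatou and the Lipschitz coarea inequality then close the loop: \(\int_{\R}\P(E_t,\Omega)\,\d t\le\liminf_n\int_\Omega\lip(g_n)\,\d\mm=|Df|(\Omega)\). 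The final clause is then immediate, since for \(f\in\BV(\X)\) the nonnegative \(t\mapsto\P(E_t,\X)\) has finite integral \(|Df|(\X)\) and hence is finite a.e.

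The main obstacle I expect to be the reverse inequality, and within it the measure-theoretic bookkeeping of the passage from the Lipschitz coarea inequality to arbitrary \(f\): one must simultaneously arrange \(L^1_\loc\)-convergence of the superlevel sets \(\{g_n>t\}\) for a.e.\ \(t\) and keep \(\int_\Omega\lip(g_n)\,\d\mm\) arbitrarily close to \(|Df|(\Omega)\), and then marry this with the a.e.-\(t\) lower semicontinuity of the perimeter through Fatou's lemma. By contrast, the ramp-approximation computation and the measurability assertion ought to be routine once the right countable coverings and monotonicity observations are set up.
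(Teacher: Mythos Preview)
The paper does not prove this theorem: it is stated with the attribution \cite{Miranda03} and no argument is given. So there is no ``paper's own proof'' to compare against; what remains is to assess your argument on its merits.

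Your proof is correct and follows the standard route in the metric BV literature (essentially the one in \cite{Miranda03} and \cite{ADM14}): the inequality \(\int_{\R}\P(E_t,\Omega)\,\d t\le|Df|(\Omega)\) via a Lipschitz coarea inequality obtained from ramp approximations, combined with Fatou along an optimal approximating sequence; the opposite inequality via a Riemann-sum/layer-cake discretisation and lower semicontinuity. Your observation that the argument uses only lower semicontinuity and subadditivity of \(|D\cdot|\), not the PI structure, is also in line with the literature.

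Two very minor points. First, the claim \(0\le f_\delta\le f\) is not quite right (at points where \(0\le f\le s_\delta\) one has \(f_\delta=s_\delta\ge f\)); what you actually need and use is \(\|f_\delta-f\|_{L^\infty}\le\delta\), which is correct. Second, to get \(\1_{\{g_n>t\}}\to\1_{E_t}\) in \(L^1_\loc(\mm\llcorner_\Omega)\) for a.e.\ \(t\) you should run the subsequence extraction along a countable exhaustion of \(\Omega\) by bounded sets and diagonalise; you hint at this earlier for the measurability step, and the same device works here.
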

\begin{remark}\label{rmk:bdry_balls_finite_H}{\rm
Given a PI space \((\X,\sfd,\mm)\) and any point \(x\in\X\), it holds that
the set \(B_r(x)\) has finite perimeter for a.e.\ radius \(r>0\).
This fact follows from the coarea formula (by applying it to the
distance function from \(x\)). Furthermore, it also holds that
\(\H\big(\partial B_r(x)\big)<+\infty\) for a.e.\ \(r>0\),
as a consequence of \cite[Proposition 5.1]{Ambrosio02}.
\fr}\end{remark}
A function \(f\in\BV(\X)\) is said to be \emph{simple} provided it can be written
as \(f=\sum_{i=1}^n\lambda_i\,\1_{E_i}\), for some \(\lambda_1,\ldots,\lambda_n\in\R\)
and some sets of finite perimeter \(E_1,\ldots,E_n\subset\X\) having finite
\(\mm\)-measure. It holds that any function of bounded variation in a PI space
can be approximated by a sequence of simple BV functions (with a uniformly
bounded total variation), as we are going to state in the next well-known result.
Nevertheless, we recall the proof of this fact for the sake of completeness.
\begin{lemma}[Density of simple \(\BV\) functions]\label{lem:dens_simple}
Let \((\X,\sfd,\mm)\) be a PI space and \(K\subset\X\) a compact set.
Fix any \(f\in\BV(\X)\) with \({\rm spt}(f)\subset K\).
Then there exists a sequence \((f_n)_n\subset\BV(\X)\) of simple functions with
\({\rm spt}(f_n)\subset K\) such
that \(f_n\to f\) in \(L^1(\mm)\) and \(|Df_n|(\X)\leq|Df|(\X)\) for all \(n\in\N\).
\end{lemma}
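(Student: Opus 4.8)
The natural approach is to use the coarea formula (Theorem \ref{thm:coarea}) to slice $f$ along its level sets, and then to quantize the level parameter. First I would reduce to the case $f\geq 0$, writing $f=f^+-f^-$; since $\mathrm{spt}(f^\pm)\subset K$ and $|Df^+|(\X)+|Df^-|(\X)$ can be controlled (using that $|Df|=|Df^+|+|Df^-|$ when the positive and negative parts are suitably separated, or more simply by applying the construction separately and adding, at the cost of being careful about the total-variation bound), it suffices to approximate a non-negative $f\in\BV(\X)$ with compact support in $K$. For such $f$, the coarea formula gives $|Df|(\X)=\int_0^\infty\P(\{f>t\})\,\d t$, and $\{f>t\}$ has finite perimeter for a.e.\ $t$, with $\{f>t\}\subset K$ (up to $\mm$-null sets, if $\mathrm{spt}(f)\subset K$; more precisely $\{f>t\}$ is $\mm$-essentially contained in $K$ for $t>0$). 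Hence each $\{f>t\}$ is a candidate building block.

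**The quantization step.** For $n\in\N$ and $j\in\N$ set $E_{j}^n:=\{f>j/n\}$ and define $f_n:=\frac1n\sum_{j=1}^\infty \1_{E_j^n}$, which is the "floor" approximation $f_n=\frac1n\lfloor nf\rfloor$. Then $f_n\to f$ in $L^1(\mm)$ by dominated convergence (since $0\leq f-f_n\leq 1/n$ and $f\in L^1(\mm)$), and $\mathrm{spt}(f_n)\subset K$ because each $E_j^n$ is essentially contained in $K$ — here one may need to intersect with $K$ or pass to $\{f>j/n\}\cap K$, which changes nothing $\mm$-a.e. Since only finitely many $E_j^n$ are non-negligible (as $f\in L^1$, $\mm(\{f>j/n\})\to 0$, and in fact $f\in L^\infty$? — not assumed, so I must truncate: replace $f$ by $f\wedge m$ first, using Remark \ref{rmk:BV_lattice} to keep the total variation bound, let $m\to\infty$, and diagonalize), $f_n$ is genuinely a finite simple function. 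For the total variation bound, subadditivity of $|D\cdot|$ (Proposition \ref{prop:properties_BV_functions}(ii)) gives $|Df_n|(\X)\leq\frac1n\sum_{j=1}^\infty\P(E_j^n)$; the key point is that this sum is a lower Riemann sum for $\int_0^\infty\P(\{f>t\})\,\d t=|Df|(\X)$, because $\P(\{f>t\})\leq\P(\{f>j/n\})$ is \emph{false} in general (perimeter is not monotone in $t$), so I must instead invoke the finer fact that $|Df_n|(\X)=\int_0^\infty\P(\{f_n>t\})\,\d t=\frac1n\sum_{j=1}^\infty\P(E_j^n)$ by coarea applied to the simple function $f_n$, and separately bound $\frac1n\sum_j\P(E_j^n)\leq\int_0^\infty\P(\{f>t\})\,\d t=|Df|(\X)$ — but this last inequality again requires monotonicity which fails.

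**Resolving the total-variation bound — the main obstacle.** The honest route, and the one I expect to be the crux, is: do \emph{not} use the dyadic floor directly, but choose the quantization levels adaptively. For fixed $n$, the function $t\mapsto\P(\{f>t\})$ is Borel and integrable on $(0,\infty)$; choose a partition $0=t_0<t_1<\cdots<t_{N}$ (with $t_N$ large, using the truncation $f\wedge m$) such that on each interval $(t_{k-1},t_k)$ the oscillation of $f$'s superlevel structure is small, and more importantly so that $\sum_{k}(t_k-t_{k-1})^{-1}$ — no. The clean statement I should aim for is the following: define $f_n$ as a simple function whose superlevel sets are among the $\{f>t\}$ and which approximates $f$ in $L^1$; then by the coarea formula for $f_n$ combined with the \emph{locality} and the structure $\{f_n>s\}=\{f>t_k\}$ for $s\in(t_{k-1}',t_k')$ appropriately, one gets $|Df_n|(\X)=\sum_k(t_k-t_{k-1})\P(\{f>t_k\})$, and by choosing the $t_k$ to be points where $\P(\{f>t_k\})\leq\fint_{(t_{k-1},t_k)}\P(\{f>t\})\,\d t$ (possible since the average is $\geq$ the essential infimum, and a.e.\ point is a Lebesgue point / one can pick a point below the average by the mean value property of the integral), we obtain $|Df_n|(\X)\leq\sum_k\int_{t_{k-1}}^{t_k}\P(\{f>t\})\,\d t\leq|Df|(\X)$. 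Finally, refining the partition so that $\max_k(t_k-t_{k-1})\to 0$ and $t_N\to\infty$ makes $f_n\to f$ in $L^1$; a diagonal argument over the truncation parameter $m$ (controlled via Remark \ref{rmk:BV_lattice}) completes the proof. The delicate point throughout is that perimeter is not monotone along level sets, so the bound $|Df_n|\leq|Df|$ cannot come from termwise comparison and must be extracted from the integral representation via a careful choice of the slicing heights.
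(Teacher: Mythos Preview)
Your proposal is correct and, after the exploratory detours, lands on exactly the paper's argument: truncate \(f\) to make it bounded (the paper uses the two-sided truncation \((f\wedge m)\vee(-m)\), which avoids the \(f^+/f^-\) split entirely), then on each mesh interval \(\big((i-1)/n,\,i/n\big)\) pick a level \(t_{i,n}\) at which \(\P(\{f>t_{i,n}\})\) lies below the interval average of \(t\mapsto\P(\{f>t\})\), so that subadditivity plus the coarea formula give \(|Df_n|(\X)\leq|Df|(\X)\). The ``main obstacle'' you identify---that perimeter is not monotone along level sets, so the slicing heights must be chosen adaptively rather than on a fixed dyadic grid---is precisely the point, and your resolution matches the paper's.
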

\begin{proof}
Given that \(f^m\coloneqq(f\wedge m)\vee(-m)\to f\) in \(L^1(\mm)\)
as \(m\to\infty\) and \(|Df^m|(\X)\leq|Df|(\X)\) for all \(m>0\) by Remark
\ref{rmk:BV_lattice}, it suffices to prove the statement under the additional
assumption that the function \(f\) is essentially bounded, say that \(-k<f<k\)
holds \(\mm\)-a.e.\ for some \(k\in\N\). Let us fix any \(n\in\N\). Given any
\(i=-kn+1,\ldots,kn\), we can choose \(t_{i,n}\in\big((i-1)/n,i/n\big)\) such that
\begin{equation}\label{eq:dens_simple_aux}
\frac{\P\big(\{f>t_{i,n}\}\big)}{n}\leq\int_{(i-1)/n}^{i/n}\P\big(\{f>t\}\big)\,\d t.
\end{equation}
Then we define the simple \(\BV\) function \(f_n\) on \(\X\) as
\[
f_n\coloneqq-k+\frac{1}{n}\sum_{i=-kn+1}^{kn}\1_{\{f>t_{i,n}\}}.
\]
It can be readily checked that \(|Df_n|(\X)\leq|Df|(\X)\). Indeed, notice that
\[
|Df_n|(\X)\leq\frac{1}{n}\sum_{i=-kn+1}^{kn}\P\big(\{f>t_{i,n}\}\big)
\overset{\eqref{eq:dens_simple_aux}}\leq\int_{-k}^k\P\big(\{f>t\}\big)\,\d t
\overset{\eqref{eq:coarea}}=|Df|(\X).
\]
Furthermore, let us define \(E_{i,n}\coloneqq\{t_{i,n}<f\leq t_{i+1,n}\}\)
for every \(i=-kn+1,\ldots,kn-1\). Moreover, we set
\(E_{-kn,n}\coloneqq\{-k<f\leq t_{-kn+1,n}\}\)
and \(E_{kn,n}\coloneqq\{t_{kn,n}<f<k\}\). Therefore, it holds that
\[\begin{split}
f_n&=-k+\frac{1}{n}\sum_{i=-kn+1}^{kn}\sum_{j=i}^{kn}\1_{E_{j,n}}=
-k+\frac{1}{n}\sum_{i=-kn+1}^{kn}(i+kn)\,\1_{E_{i,n}}\\
&=-k\,\1_{E_{-kn,n}}+\sum_{i=-kn+1}^{kn}\frac{i}{n}\,\1_{E_{i,n}},
\end{split}\]
thus accordingly \(|f-f_n|=|f-i/n|\leq 1/n\) on \(E_{i,n}\) for all
\(i=-kn,\ldots,kn\). This ensures that
\[
\int|f-f_n|\,\d\mm=\sum_{i=-kn}^{kn}\int_{E_{i,n}}|f-i/n|\,\d\mm
\leq\frac{\mm(K)}{n}\overset{n}\longrightarrow 0.
\]
Therefore, we have that \(f_n\to f\) in \(L^1(\mm)\). Since \({\rm spt}(f_n)\subset K\)
for every \(n\in\N\) by construction, the proof of the statement is achieved.
\end{proof}
\begin{remark}\label{rmk:stronger_dens_simple}{\rm
In the proof of Lemma \ref{lem:dens_simple} we obtained a stronger property:
each approximating function \(f_n\) (say, \(f_n=\sum_{i=1}^{k_n}\lambda^n_i\,\1_{E^n_i}\))
can be required to satisfy \(\sum_{i=1}^{k_n}|\lambda^n_i|\,\P(E^n_i)\leq|Df|(\X)\). 
\fr}\end{remark}
The following result states that, in the context of PI spaces, the
perimeter measure admits an integral representation (with respect
to the codimension-one Hausdorff measure):
\begin{theorem}[Representation of the perimeter measure]\label{thm:repr_per}
Let \((\X,\sfd,\mm)\) be a PI space. Let \(E\subset\X\) be a set of finite
perimeter. Then the perimeter measure \(\P(E,\cdot)\) is concentrated on the Borel set
\begin{equation}\label{eq:def_Sigma_tau}
\Sigma_\tau(E)\coloneqq
\Bigg\{x\in\X\;\Bigg|\;\limi_{r\searrow 0}
\min\bigg\{\frac{\mm\big(E\cap B_r(x)\big)}{\mm\big(B_r(x)\big)},
\frac{\mm\big(E^c\cap B_r(x)\big)}{\mm\big(B_r(x)\big)}\bigg\}\geq\tau\Bigg\}
\subset\partial^e E,
\end{equation}
where \(\tau\in(0,1/2)\) is a constant depending just on \(C_D\), \(C_P\) and \(\lambda\).
Moreover, the set \(\partial^e E\setminus\Sigma_\tau(E)\) is \(\H\)-negligible
and it holds that \(\H(\partial^e E)<+\infty\). Finally, there exist a constant
\(\gamma>0\) (depending on \(C_D\), \(C_P\), \(\lambda\)) and a Borel
function \(\theta_E\colon\partial^e E\to[\gamma,C_D]\) such that
\(\P(E,\cdot)=\theta_E\,\H\llcorner_{\partial^e E}\), namely
\begin{equation}\label{eq:representation_perimeter}
\P(E,B)=\int_{B\cap\partial^e E}\theta_E\,\d\H
\quad\text{ for every Borel set }B\subset\X.
\end{equation}
We shall sometimes consider \(\theta_E\) as a Borel function defined on the
whole space \(\X\), by declaring that \(\theta_E\coloneqq 0\) on
the set \(\X\setminus\partial^e E\).
\end{theorem}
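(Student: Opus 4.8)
The plan is to follow the argument of Ambrosio \cite{Ambrosio01,Ambrosio02}, whose three essential inputs are already at our disposal: the relative isoperimetric inequality (Theorem \ref{thm:rel_isoper}), the coarea formula (Theorem \ref{thm:coarea}), and Vitali-type covering and differentiation of measures in doubling spaces. The inclusion $\Sigma_\tau(E)\subseteq\partial^e E$ is immediate from the definitions, since $\underline D(E,x)\geq\tau>0$ and $\underline D(E^c,x)\geq\tau>0$ force both upper densities to be positive. The remaining content I would organise into three blocks: (a) a lower estimate for the perimeter density $q_E(x,r):=2r\,\P(E,B_r(x))/\mm(B_r(x))$ at $\P(E,\cdot)$-a.e.\ point, together with the concentration of $\P(E,\cdot)$ on $\Sigma_\tau(E)$; (b) the finiteness $\H(\partial^e E)<+\infty$, an upper estimate for $q_E(x,r)$, and the $\H$-negligibility of $\partial^e E\setminus\Sigma_\tau(E)$; (c) a Radon--Nikodym argument producing $\theta_E$.

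Block (a) rests on the metric density estimates of \cite{Ambrosio01,Ambrosio02}, obtained by a De Giorgi-type iteration of \eqref{eq:rel_isoper_ineq}. There exist $\gamma,\tau>0$, depending only on $C_D,C_P,\lambda$, such that, feeding the assumption $q_E(x,r)<\gamma$ (at scales comparable to $r$) into \eqref{eq:rel_isoper_ineq} and absorbing the doubling factor $\mm(B_{2\lambda r}(x))\leq C\,\mm(B_r(x))$, the minority-phase ratio $m_E(x,r):=\min\{\mm(E\cap B_r(x)),\mm(E^c\cap B_r(x))\}/\mm(B_r(x))$ is forced to be at most $C\,\gamma^{s/(s-1)}$; iterating down dyadic scales then shows that at a point where the perimeter density remains below $\gamma$ the minority phase decays to $0$, so that point does not belong to $\partial^e E$. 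A Vitali covering argument upgrades this pointwise dichotomy to the assertions that, for $\P(E,\cdot)$-a.e.\ $x$, $\liminf_{r\to0}q_E(x,r)\geq\gamma$ and $\min\{\underline D(E,x),\underline D(E^c,x)\}\geq\tau$, and that $\P(E,\X\setminus\Sigma_\tau(E))=0$ (here one uses that $\P(E,\cdot)$ is finite and vanishes on $\H$-null sets). Hence $\P(E,\cdot)$ is concentrated on $\Sigma_\tau(E)$.

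For block (b), the coarea formula applied to the $1$-Lipschitz function $\sfd(x,\cdot)$ (cf.\ Remark \ref{rmk:bdry_balls_finite_H}) gives $\P(B_\rho(x))\leq C\,\mm(B_\rho(x))/\rho$ for a.e.\ $\rho$; covering $\partial^e E$ by balls along which $q_E$ is comparable to $\liminf_r q_E\geq\gamma$, extracting a disjoint subfamily $\{B_{r_i}(x_i)\}$, and summing $\mm(B_{r_i}(x_i))/(2r_i)\leq\gamma^{-1}\P(E,B_{r_i}(x_i))$ yields $\H(\partial^e E)\leq C\,\gamma^{-1}\P(E)<+\infty$. The upper density theorem for the measure arising from the ball gauge $\mm(B_r)/(2r)$ via the Carath\'{e}odory construction, together with a companion covering estimate for the finite measure $\P(E,\cdot)$ (concentrated on $\partial^e E$), then gives $\limsup_{r\to0}q_E(x,r)\leq C_D$ for $\H$-a.e.\ $x\in\partial^e E$. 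Finally, $\H(\partial^e E\setminus\Sigma_\tau(E))=0$ is the metric surrogate of Federer's criterion: this set lies in $\{\underline D(E,\cdot)<\tau\}\cup\{\underline D(E^c,\cdot)<\tau\}$ although each of its points has positive upper density, so one covers it by balls on which one phase is thin and uses \eqref{eq:rel_isoper_ineq} to bound its $\H$-content by an arbitrarily small multiple of $\P(E,\partial^e E\setminus\Sigma_\tau(E))$, which vanishes by block (a).

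With (a)--(b) available, block (c) is routine. Both $\P(E,\cdot)$ and $\H\llcorner_{\partial^e E}$ are finite Radon measures, the former is concentrated on $\Sigma_\tau(E)\subseteq\partial^e E$ and vanishes on $\H$-null sets, and on $\Sigma_\tau(E)$ the two-sided bounds $\gamma\leq\liminf_r q_E(x,r)\leq\limsup_r q_E(x,r)\leq C_D$, matched against the corresponding estimate for $\H\llcorner_{\partial^e E}$ relative to the same gauge, pin the Radon--Nikodym derivative of $\P(E,\cdot)$ with respect to $\H\llcorner_{\partial^e E}$ in $[\gamma,C_D]$; the differentiation theorem in doubling spaces then furnishes a Borel function $\theta_E\colon\partial^e E\to[\gamma,C_D]$ with $\P(E,\cdot)=\theta_E\,\H\llcorner_{\partial^e E}$, and extending $\theta_E$ by $0$ off $\partial^e E$ does no harm. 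I expect the genuine difficulty to be block (a) -- making the De Giorgi-type iteration of the relative isoperimetric inequality fully quantitative, with the correct interplay of the exponent $s$, the doubling constant and the thresholds $\gamma,\tau$, and converting the pointwise dichotomy into honest $\P(E,\cdot)$-a.e.\ bounds via covering -- closely followed by the $\H$-negligibility of $\partial^e E\setminus\Sigma_\tau(E)$ in block (b). These are exactly the points where \cite{Ambrosio01,Ambrosio02} do the hard work.
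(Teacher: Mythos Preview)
Your proposal is correct and follows the same route as the references the paper invokes. Note that the paper itself does not give a self-contained proof of this theorem but simply cites \cite[Theorems~5.3 and~5.4]{Ambrosio02} for the representation formula and the concentration on $\Sigma_\tau(E)$, and \cite[Theorem~4.6]{AMP04} for the upper bound $\theta_E\leq C_D$; your three-block outline accurately sketches the strategy carried out in those references.
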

\begin{proof}
The result is mostly proven in \cite[Theorem 5.3]{Ambrosio02}.
The fact that the measure \(\P(E,\cdot)\) is concentrated on the
set \(\Sigma_\tau(E)\) is shown in \cite[Theorem 5.4]{Ambrosio02}.
Finally, the upper bound \(\theta_E\leq C_D\) has been obtained in
\cite[Theorem 4.6]{AMP04}.
\end{proof}
\begin{lemma}\label{lem:incl_front_comp}
Let \((\X,\sfd,\mm)\) be a PI space. Let \(F\subset E\subset\X\) be two
sets of finite perimeter such that \(\P(E)=\P(F)+\P(E\setminus F)\).
Then \(\H(\partial^e F\setminus\partial^e E)=0\).
\end{lemma}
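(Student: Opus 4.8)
The plan is to upgrade the scalar identity $\P(E)=\P(F)+\P(E\setminus F)$ to an identity between the perimeter \emph{measures}, and then to test that identity against the set $N\coloneqq\partial^e F\setminus\partial^e E$, exploiting the uniform lower bound $\theta_F\geq\gamma>0$ furnished by Theorem \ref{thm:repr_per}.

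First I would localise the additivity. Since $F$ and $E\setminus F$ are disjoint Borel sets with $F\cup(E\setminus F)=E$, subadditivity (Proposition \ref{prop:properties_finite_perimeter_sets} iii), applied to the pair $F$, $E\setminus F$) gives $\P(E,B)\leq\P(F,B)+\P(E\setminus F,B)$ for every Borel set $B\subset\X$, because the term $\P\big(F\cap(E\setminus F),B\big)=\P(\emptyset,B)$ vanishes. Writing this inequality for $B$ and for $\X\setminus B$ and adding, the two right-hand sides sum to $\P(F)+\P(E\setminus F)=\P(E)$ by hypothesis, while the two left-hand sides sum to $\P(E,B)+\P(E,\X\setminus B)=\P(E)$ because $\P(E,\cdot)$ is a finite measure. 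Hence neither inequality can be strict, and we obtain $\P(E,B)=\P(F,B)+\P(E\setminus F,B)$ for every Borel $B$; in particular $\P(F,B)\leq\P(E,B)$ for all Borel $B$.

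Then I would plug in $B=N=\partial^e F\setminus\partial^e E$. Using the Hausdorff representation of Theorem \ref{thm:repr_per} we have $\P(E,N)=\int_{N\cap\partial^e E}\theta_E\,\d\H=0$, since $N\cap\partial^e E=\emptyset$, whereas $\P(F,N)=\int_{N\cap\partial^e F}\theta_F\,\d\H=\int_N\theta_F\,\d\H\geq\gamma\,\H(N)$, since $N\subset\partial^e F$ and $\theta_F\geq\gamma$ there. Combining these with $\P(F,N)\leq\P(E,N)=0$ forces $\gamma\,\H(N)\leq 0$, hence $\H(N)=0$, which is exactly the claim. (There is no subtlety in dividing by $\gamma$, as $\H(N)\leq\H(\partial^e F)<+\infty$, again by Theorem \ref{thm:repr_per}.)

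I do not expect a genuine obstacle here: every step is either a direct application of subadditivity, the elementary identity $\mu(B)+\mu(B^c)=\mu(\X)$ for a finite measure, or the representation formula with its uniform constant $\gamma$. The only point that deserves care is the very first one — correctly promoting the scalar hypothesis to the level of measures so that it may be restricted to $N$ — and once that is in place the rest is routine bookkeeping with essential boundaries.
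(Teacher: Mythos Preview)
Your proof is correct and follows essentially the same approach as the paper: both use subadditivity against the global hypothesis to force \(\P(F,\cdot)\) to be concentrated on \(\partial^e E\), and then invoke the uniform lower bound \(\theta_F\geq\gamma\) from Theorem~\ref{thm:repr_per}. The paper is marginally more economical---it tests subadditivity directly on the single set \(\partial^e E\) (obtaining \(\P(F,\partial^e E)=\P(F)\), hence \(\P(F,(\partial^e E)^c)=0\)) rather than first upgrading to the full identity of measures on every Borel set---but this is a cosmetic difference, not a substantive one.
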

\begin{proof}
By using item iii) of Proposition \ref{prop:properties_finite_perimeter_sets}
we deduce that
\[
\P(E)=\P(E,\partial^e E)\leq\P(F,\partial^e E)+\P(E\setminus F,\partial^e E)
\leq\P(F)+\P(E\setminus F)=\P(E),
\]
which forces the identity \(\P(F,\partial^e E)=\P(F)\). This implies
\((\theta_F\H)(\partial^e F\setminus\partial^e E)=\P\big(F,(\partial^e E)^c\big)=0\)
by Theorem \ref{thm:repr_per}, whence accordingly
\(\H(\partial^e F\setminus\partial^e E)=0\), as required.
\end{proof}
The density function \(\theta_E\) that appears in the Hausdorff representation
formula for \(\P(E,\cdot)\) might depend on the set \(E\) itself (cf.\ Example
\ref{ex:non_isotropic} below for an instance of this phenomenon). On the other
hand, the new results that we are going to present in this paper require the
density \(\theta_E\) to be `universal'-- in a suitable sense. The precise
formulation of this property is given in the next definition, which has been
proposed in \cite[Definition 6.1]{AMP04}.
\begin{definition}[Isotropic space]
Let \((\X,\sfd,\mm)\) be a PI space. Then we say that \((\X,\sfd,\mm)\) is
\emph{isotropic} provided for any pair of sets \(E,F\subset\X\) of finite perimeter
satisfying \(F\subset E\) it holds that
\begin{equation}\label{eq:isotropic_def}
\theta_F(x)=\theta_E(x)\quad\text{ for }\H\text{-a.e.\ }x\in\partial^e F\cap\partial^e E.
\end{equation}
\end{definition}
\begin{example}\label{ex:non_isotropic}{\rm
Let \(\X\) be the graph with four edges \(E_1,\ldots, E_4\) attached to a common
vertex \(V\), with edges \(V_1,\ldots,V_4\) on the other ends of \(E_1,\ldots,E_4\),
respectively. Let \(\sfd\) be the pathmetric in \(\X\) and \(\mm\) the one-dimensional
Hausdorff measure on \(\X\). The space \((\X,\sfd,\mm)\) is then an Ahlfors-regular PI
space which is not isotropic: we have \(\theta_{E_1}(V)=1\) and
\(\theta_{E_1\cup E_2}(V)=2\).
\fr}\end{example}
We shall also sometimes work with PI spaces \((\X,\sfd,\mm)\)
satisfying the following property:
\begin{equation}\label{eq:extra_hp_PI}
\H\big(\partial^e E\cap\partial^e F\cap\partial^e(E\cup F)\big)=0
\quad\text{ for any disjoint sets }E,F\subset\X\text{ of finite perimeter.}
\end{equation}
We do not know whether isotropicity follows from \eqref{eq:extra_hp_PI}.
However, the two concepts are not equivalent, as shown by the following example:
\begin{example}\label{ex:extra_hp_fails}{\rm
Similarly as in Example \ref{ex:non_isotropic}, we define \(\X\) to be the graph
with three edges \(E_1,E_2,E_3\) attached to a common vertex \(V\), with edges
\(V_1,V_2,V_3\) on the other ends of \(E_1,E_2,E_3\), respectively. Let \(\sfd\) be
the pathmetric in \(\X\) and \(\mm\) the one-dimensional Hausdorff measure on \(\X\).
The space \((\X,\sfd,\mm)\) is then an isotropic Ahlfors-regular PI space,
where property \eqref{eq:extra_hp_PI} fails:
\[
\partial^e E_1\cap\partial^e E_2\cap\partial^e(E_1\cup E_2)=\{V\}
\]
and \(\H(\{V\})>0\).
\fr}\end{example}
A sufficient condition for isotropicity and \eqref{eq:extra_hp_PI} to hold
is provided by the following result:
\begin{lemma}\label{lem:suff_cond_isotr}
Let \((\X,\sfd,\mm)\) be a PI space with the following property:
\begin{equation}\label{eq:density_1_2}
\H(\partial^e E\setminus E^{\nicefrac{1}{2}})=0\quad
\text{ for every set }E\subset\X\text{ of finite perimeter}
\end{equation}
(or, equivalently, the measure \(\P(E,\cdot)\)
is concentrated on \(E^{\nicefrac{1}{2}}\)). Then the space \((\X,\sfd,\mm)\)
is isotropic and satisfies property \eqref{eq:extra_hp_PI}.
\end{lemma}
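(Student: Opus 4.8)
The plan is to obtain both assertions from the single structural input of Theorem~\ref{thm:repr_per}: \(\P(E,\cdot)=\theta_E\,\H\llcorner_{\partial^e E}\) with \(\gamma\le\theta_E\le C_D\). In particular \(\P(E,\cdot)\) and \(\H\llcorner_{\partial^e E}\) are mutually absolutely continuous, so the hypothesis \(\H\big(\partial^e E\setminus E^{\nicefrac{1}{2}}\big)=0\) is indeed equivalent to \(\P(E,\cdot)\) being concentrated on \(E^{\nicefrac{1}{2}}\), and it says precisely that \(\H\)-a.e.\ point of \(\partial^e E\) has \(\mm\)-density \(1/2\). Granted this, both arguments are short: they use only the additivity of \(\mm\llcorner_{B_r(x)}\) on disjoint sets and the subadditivity of the total variation.

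First I would prove \eqref{eq:extra_hp_PI}. Let \(E,F\) be disjoint sets of finite perimeter. By the hypothesis applied to \(E\) and to \(F\), for \(\H\)-a.e.\ \(x\in\partial^e E\cap\partial^e F\) we have \(D(E,x)=D(F,x)=1/2\). For such \(x\), disjointness yields \(\mm\big((E\cup F)\cap B_r(x)\big)=\mm\big(E\cap B_r(x)\big)+\mm\big(F\cap B_r(x)\big)\) for every \(r>0\), whence \(D(E\cup F,x)=1\); hence \(\overline D\big((E\cup F)^c,x\big)=0\) and so \(x\notin\partial^e(E\cup F)\). This shows that \(\partial^e E\cap\partial^e F\cap\partial^e(E\cup F)\) is contained in an \(\H\)-null set, which is \eqref{eq:extra_hp_PI}.

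Next I would prove isotropicity. Fix sets of finite perimeter \(F\subset E\) and set \(G\coloneqq E\setminus F\); since \(\1_G=\1_E-\1_F\), the set \(G\) has finite perimeter. Put \(N\coloneqq\big(\partial^e E\setminus E^{\nicefrac{1}{2}}\big)\cup\big(\partial^e F\setminus F^{\nicefrac{1}{2}}\big)\), an \(\H\)-null set by hypothesis, and \(A_0\coloneqq(\partial^e F\cap\partial^e E)\setminus N\), so that \(\H\big((\partial^e F\cap\partial^e E)\setminus A_0\big)=0\). For every \(x\in A_0\) one has \(\mm\big(G\cap B_r(x)\big)=\mm\big(E\cap B_r(x)\big)-\mm\big(F\cap B_r(x)\big)\) for all \(r>0\), hence \(D(G,x)=0\) and in particular \(x\notin\partial^e G\); thus \(A_0\cap\partial^e G=\emptyset\), and since \(\P(G,\cdot)\) is concentrated on \(\partial^e G\) (Theorem~\ref{thm:repr_per}) it follows that \(\P(G,A)=0\) for every Borel set \(A\subset A_0\). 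Applying the subadditivity of \(|D\cdot|\) (Proposition~\ref{prop:properties_BV_functions}) to \(\1_E=\1_F+\1_G\) and to \(\1_F=\1_E-\1_G\) (and using \(|D(-\1_G)|=|D\1_G|\)) one gets
\[
\P(E,A)\le\P(F,A)+\P(G,A)=\P(F,A)\qquad\text{and}\qquad\P(F,A)\le\P(E,A)+\P(G,A)=\P(E,A)
\]
for every Borel \(A\subset A_0\), hence \(\P(E,A)=\P(F,A)\). As \(A\subset\partial^e E\cap\partial^e F\), this is \(\int_A\theta_E\,\d\H=\int_A\theta_F\,\d\H\); testing with \(A=A_0\cap\{\theta_E>\theta_F\}\) and \(A=A_0\cap\{\theta_E<\theta_F\}\) — which have finite \(\H\)-measure, since \(\H(A_0)\le\H(\partial^e F)<+\infty\) and \(\theta_E,\theta_F\le C_D\) — forces \(\theta_E=\theta_F\) \(\H\)-a.e.\ on \(A_0\), hence \(\H\)-a.e.\ on \(\partial^e F\cap\partial^e E\) as well. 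This is exactly \eqref{eq:isotropic_def}.

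The conceptual heart of the argument — and the only step worth flagging — is the observation that at a common density-\(\nicefrac{1}{2}\) boundary point of the pair \(F\subset E\) the ``difference'' \(G=E\setminus F\) has \(\mm\)-density \(0\), so that \(G\) carries no perimeter there and the two subadditivity estimates degenerate into the equality \(\P(E,\cdot)=\P(F,\cdot)\) on \(A_0\). Everything else is routine measure-theoretic bookkeeping, together with the standard fact that \(\partial^e E\), \(E^{\nicefrac{1}{2}}\) and hence \(A_0\) are \(\H\)-measurable. It is also worth noting that, beyond the existence and two-sided positivity of the density \(\theta\) supplied by Theorem~\ref{thm:repr_per}, the proof uses no further ``PI'' ingredient; in particular, neither the Poincar\'e inequality nor the relative isoperimetric inequality is invoked.
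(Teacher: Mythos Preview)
Your proof is correct. The argument for \eqref{eq:extra_hp_PI} is essentially identical to the paper's: both observe that at a common density-\(\nicefrac{1}{2}\) point of disjoint sets \(E,F\) the union has density \(1\), hence such a point cannot lie in \(\partial^e(E\cup F)\).

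For isotropicity the two approaches differ. The paper simply cites \cite[Remark 6.3]{AMP04} and gives no argument. You instead supply a direct, self-contained proof: at an \(\H\)-a.e.\ point of \(\partial^e E\cap\partial^e F\) the difference \(G=E\setminus F\) has density \(0\), hence contributes no perimeter there, and the two-sided subadditivity of \(|D\cdot|\) collapses to \(\P(E,\cdot)=\P(F,\cdot)\) on that set; identifying densities then follows from the representation formula and the finiteness of \(\H(\partial^e F)\). This buys you independence from the external reference and makes transparent exactly which ingredients are used (only Theorem~\ref{thm:repr_per} and subadditivity), at the cost of a few more lines. The argument in \cite{AMP04} proceeds along similar density-comparison lines, so the underlying idea is close, but your packaging via the vanishing of \(\P(G,\cdot)\) on \(A_0\) and the pair of subadditivity inequalities is clean and arguably more elementary than invoking the machinery developed there.
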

\begin{proof}
The fact that \((\X,\sfd,\mm)\) is isotropic is proven in \cite[Remark 6.3]{AMP04}.
To prove \eqref{eq:extra_hp_PI}, fix two disjoint sets \(E,F\subset\X\)
of finite perimeter. Given any point \(x\in E^{\nicefrac{1}{2}}\cap F^{\nicefrac{1}{2}}\),
we have that
\[
D(E\cup F,x)=\lim_{r\searrow 0}\frac{\mm\big(E\cap B_r(x)\big)}{\mm\big(B_r(x)\big)}+
\lim_{r\searrow 0}\frac{\mm\big(F\cap B_r(x)\big)}{\mm\big(B_r(x)\big)}=D(E,x)+D(F,x)=1,
\]
thus in particular \(x\notin(E\cup F)^{\nicefrac{1}{2}}\). This shows that
\(E^{\nicefrac{1}{2}}\cap F^{\nicefrac{1}{2}}\cap(E\cup F)^{\nicefrac{1}{2}}=\emptyset\),
whence we can conclude that
\(\H\big(\partial^e E\cap\partial^e F\cap\partial^e(E\cup F)\big)=0\).
This proves the validity of \eqref{eq:extra_hp_PI}.
\end{proof}
\begin{example}[Examples of isotropic spaces]\label{ex:isotr_spaces}{\rm
Let us conclude the section by expounding which classes of PI spaces
are known to be isotropic (to the best of our knowledge):
\begin{itemize}
\item[\(\rm i)\)] Weighted Euclidean spaces (induced by a continuous,
strong \(A_\infty\) weight).
\item[\(\rm ii)\)] Carnot groups of step \(2\).
In particular, the Heisenberg groups \(\mathbb H^N\) (for any \(N\geq 1\)). 
\item[\(\rm iii)\)] Non-collapsed \(\sf RCD\) spaces.
In particular, all compact Riemannian manifolds.
\end{itemize}
Isotropicity of the spaces in i) and ii) is shown in \cite[Section 7]{AMP04}.
It also follows from the rectifiability results in \cite{FSSC01,FSSC03}
that Carnot groups of step \(2\) satisfy \eqref{eq:density_1_2}, thus also
\eqref{eq:extra_hp_PI} by Lemma \ref{lem:suff_cond_isotr}.
About item iii), it follows from \cite[Corollary 4.4]{ABS18} that the
measure \(\P(E,\cdot)\) associated to a set of finite perimeter \(E\) is
concentrated on \(E^{\nicefrac{1}{2}}\), whence the space is isotropic and
satisfies \eqref{eq:extra_hp_PI}.
\fr}\end{example}
\section{Decomposability of a set of finite perimeter}\label{s:decomposition}
This section is entirely devoted to the decomposability
properties of sets of finite perimeter in isotropic PI spaces.
An indecomposable set is, roughly speaking, a set of finite
perimeter that is connected in a measure-theoretical sense.
Subsection \ref{ss:decomposable_def} consists of a detailed
study of the basic properties of this class of sets. In
Subsection \ref{ss:decomposition_thm} we will prove that
any set of finite perimeter can be uniquely expressed as
disjoint union of indecomposable sets (cf.\ Theorem \ref{thm:decomposition_thm}).
The whole discussion is strongly inspired by the results of
\cite{ACMM01}, where the decomposability of sets of finite perimeter
in the Euclidean setting has been systematically investigated.
Actually, many of the results (and the relative proofs) in this section are
basically just a reformulation -- in the metric setting -- of the corresponding
ones in \(\R^n\), proven in \cite{ACMM01}. We postpone to Remark
\ref{rmk:difference_with_Rn} the discussion of the main differences
between the case of isotropic PI spaces and the Euclidean one.
\subsection{Definition of decomposable set and its basic properties}
\label{ss:decomposable_def}
Let us begin with the definition of decomposable set and
indecomposable set in a general metric measure space.
\begin{definition}[Decomposable and indecomposable sets]
Let \((\X,\sfd,\mm)\) be a metric measure space. Let \(E\subset\X\)
be a set of finite perimeter. Given any Borel set \(B\subset\X\),
we say that \(E\) is \emph{decomposable in \(B\)} provided there
exists a partition \(\{F,G\}\) of \(E\cap B\) into sets of finite
perimeter such that \(\mm(F),\mm(G)>0\) and \(\P(E,B)=\P(F,B)+\P(G,B)\).
On the other hand, we say that \(E\) is \emph{indecomposable in \(B\)}
if it is not decomposable in \(B\). For brevity, we say that \(E\) is
\emph{decomposable} (resp.\ \emph{indecomposable}) provided it is
decomposable in \(\X\) (resp.\ indecomposable in \(\X\)).
\end{definition}
Observe that the property of being decomposable/indecomposable is invariant
under modifications on \(\mm\)-null sets and that any \(\mm\)-negligible set
is indecomposable.
\begin{remark}\label{rmk:ineq_perim}{\rm
Let \(E\subset\X\) be a set of finite perimeter.
Let \(\{E_n\}_{n\in\N}\) be a partition of \(E\) into sets of finite perimeter
and let \(\Omega\subset\X\) be any open set. Then it holds that:
\[
\P(E,\Omega)=\sum_{n=0}^\infty\P(E_n,\Omega)
\quad\Longleftrightarrow\quad
\P(E,\Omega)\geq\sum_{n=0}^\infty\P(E_n,\Omega).
\]
Indeed, it can be readily checked that \(\1_{\bigcup_{n\leq N}E_n}\to\1_E\)
in \(L^1_\loc(\mm)\) as \(N\to\infty\), whence items ii) and iii) of Proposition
\ref{prop:properties_finite_perimeter_sets} grant that the inequality
\[
\P(E,\Omega)\leq\limi_{N\to\infty}\P\Big(\bigcup\nolimits_{n\leq N}E_n,\Omega\Big)
\leq\lim_{N\to\infty}\sum_{n=0}^N\P(E_n,\Omega)=\sum_{n=0}^\infty\P(E_n,\Omega)
\]
is always verified.
\fr}\end{remark}
\begin{lemma}\label{lem:equiv_P_additive}
Let \((\X,\sfd,\mm)\) be an isotropic PI space. Let \(E,F\subset\X\) be sets of finite
perimeter and let \(B\subset\X\) be any Borel set. Then the following implications hold:
\begin{itemize}
\item[\(\rm i)\)] If \(\P(E\cup F,B)=\P(E,B)+\P(F,B)\), then
\(\H(\partial^e E\cap\partial^e F\cap B)=0\).
\item[\(\rm ii)\)] If \(\mm(E\cap F)=0\) and
\(\H(\partial^e E\cap\partial^e F\cap B)=0\), then \(\P(E\cup F,B)=\P(E,B)+\P(F,B)\).
\end{itemize}
\end{lemma}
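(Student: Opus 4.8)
I want to prove Lemma \ref{lem:equiv_P_additive}, relating the additivity $\P(E\cup F,B)=\P(E,B)+\P(F,B)$ to the $\H$-negligibility of the triple-overlap $\partial^e E\cap\partial^e F\cap B$ (item i) and its converse (item ii). The central tool is the Hausdorff representation formula $\P(G,\cdot)=\theta_G\,\H\llcorner_{\partial^e G}$ from Theorem \ref{thm:repr_per}, together with isotropicity (Definition of isotropic space), and the combinatorial facts about essential boundaries from Proposition \ref{prop:properties_ess_bdry}. The key reduction is to compare the three measures $\P(E\cup F,\cdot)$, $\P(E,\cdot)$, $\P(F,\cdot)$ by writing each as $\theta_\bullet\,\H$ restricted to the appropriate essential boundary and then carefully tracking on which parts of $\X$ the densities agree.

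**Item i).** Assume $\P(E\cup F,B)=\P(E,B)+\P(F,B)$. Apply Lemma \ref{lem:incl_front_comp} with the pair $E\subset E\cup F$ (note $F'\coloneqq E$ is a subset of $E\cup F$, and the perimeter additivity $\P(E\cup F)=\P(E)+\P((E\cup F)\setminus E)=\P(E)+\P(F\setminus E)$ can be obtained from the hypothesis localized to $B$ via subadditivity and Remark \ref{rmk:ineq_perim}-type reasoning — I would first upgrade the hypothesis on $B$ to the global statement, using $\P(E\cup F,B^c)\le\P(E,B^c)+\P(F,B^c)$ together with the $B$-equality and global subadditivity). This yields $\H(\partial^e E\setminus\partial^e(E\cup F))=0$ and symmetrically $\H(\partial^e F\setminus\partial^e(E\cup F))=0$. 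Now on the set $\partial^e E\cap\partial^e F\cap\partial^e(E\cup F)\cap B$, isotropicity gives $\theta_E=\theta_{E\cup F}$ and $\theta_F=\theta_{E\cup F}$ $\H$-a.e. (applying it to $E\subset E\cup F$ and $F\subset E\cup F$). Comparing the total masses: $\int_{\partial^e(E\cup F)\cap B}\theta_{E\cup F}\,\d\H=\int_{\partial^e E\cap B}\theta_E\,\d\H+\int_{\partial^e F\cap B}\theta_F\,\d\H$. Using the negligibility of the symmetric differences to rewrite everything as integrals over subsets of $\partial^e(E\cup F)\cap B$, and splitting according to whether a point lies in $\partial^e E$, in $\partial^e F$, or in both, the equality forces $\int_{\partial^e E\cap\partial^e F\cap B}\theta_E\,\d\H=0$; since $\theta_E\ge\gamma>0$ on $\partial^e E$, this gives $\H(\partial^e E\cap\partial^e F\cap B)=0$.

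**Item ii).** Assume $\mm(E\cap F)=0$ and $\H(\partial^e E\cap\partial^e F\cap B)=0$. By Proposition \ref{prop:properties_ess_bdry}-iv), $\partial^e E\cup\partial^e F=\partial^e(E\cup F)\cup(\partial^e E\cap\partial^e F)$, so modulo the $\H$-null triple overlap (intersected with $B$) we have, $\H$-a.e.\ on $B$, that $\partial^e(E\cup F)$ differs from $\partial^e E\cup\partial^e F$ only on the set $\partial^e E\cap\partial^e F$, which is $\H$-null on $B$. Hence $\partial^e E\cap B$ and $\partial^e F\cap B$ are, up to $\H$-null sets, disjoint and their union is $\partial^e(E\cup F)\cap B$ up to $\H$-null sets. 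On the disjoint pieces isotropicity again identifies the densities: $\theta_{E\cup F}=\theta_E$ $\H$-a.e.\ on $\partial^e E\cap\partial^e(E\cup F)$ and $=\theta_F$ $\H$-a.e.\ on $\partial^e F\cap\partial^e(E\cup F)$. Integrating, $\P(E\cup F,B)=\int_{\partial^e(E\cup F)\cap B}\theta_{E\cup F}\,\d\H=\int_{\partial^e E\cap B}\theta_E\,\d\H+\int_{\partial^e F\cap B}\theta_F\,\d\H=\P(E,B)+\P(F,B)$, using that the two contributing boundaries are $\H$-a.e.\ disjoint and $\H$-a.e.\ exhaust $\partial^e(E\cup F)\cap B$. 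One must also check $\partial^e(E\cup F)\setminus(\partial^e E\cup\partial^e F)$ is $\H$-null, which is immediate from Proposition \ref{prop:properties_ess_bdry}-ii).

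**Main obstacle.** The delicate point is bookkeeping the essential boundaries $\H$-a.e.: in item i) I need the hypothesis stated only on $B$ to be promotable to a global perimeter-additivity so that Lemma \ref{lem:incl_front_comp} applies, and I need to verify that isotropicity (which is stated for $F\subset E$, i.e.\ for nested sets) can indeed be invoked for the pairs $E\subset E\cup F$ and $F\subset E\cup F$ — it can, since those are genuine inclusions. The decomposition-of-mass argument then has to correctly account for the set where $\theta_{E\cup F}$ is ``double-counted'' by both $\theta_E$ and $\theta_F$, which is exactly $\partial^e E\cap\partial^e F$; equality of masses together with the strictly positive lower bound $\gamma$ on the densities is what closes the argument. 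In item ii) the analogous subtlety is ensuring no mass of $\P(E\cup F,\cdot)$ escapes to $\partial^e(E\cup F)\setminus(\partial^e E\cup\partial^e F)$, which Proposition \ref{prop:properties_ess_bdry}-ii) rules out.
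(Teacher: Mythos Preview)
Your treatment of item ii) is essentially correct and matches the paper's argument: using Proposition \ref{prop:properties_ess_bdry} iv) together with isotropicity and the representation formula is exactly how the paper proceeds.

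For item i), however, there is a genuine gap. You propose to ``upgrade'' the hypothesis \(\P(E\cup F,B)=\P(E,B)+\P(F,B)\) to a global identity so as to invoke Lemma \ref{lem:incl_front_comp} and obtain \(\H(\partial^e E\setminus\partial^e(E\cup F))=0\). This upgrade is not possible: combining the equality on \(B\) with subadditivity on \(B^c\) only reproduces the global subadditivity inequality, not an equality. Concretely, in \(\R^2\) with \(E=[0,1]^2\), \(F=[1,2]\times[0,1]\), and \(B\) a small ball centred on \(\partial E\) away from the common edge, the hypothesis holds on \(B\) while \(\P(E\cup F)=6\neq 8=\P(E)+\P(F)\). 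So Lemma \ref{lem:incl_front_comp}, which requires the global additivity, is unavailable. A second (smaller) issue: in item i) there is no disjointness assumption on \(E,F\), so the passage from \(\P(E,B)+\P(F,B)\) to \(\P(E)+\P(F\setminus E)\) would also need justification.

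The paper's proof avoids this entirely by not treating \(\H\big((\partial^e E\cap B)\setminus\partial^e(E\cup F)\big)=0\) as an \emph{input}: it is instead an \emph{output} of the mass-comparison. One writes the chain
\[
\P(E\cup F,B)\leq(\theta_{E\cup F}\H)\big((\partial^e E\cup\partial^e F)\cap B\big)
\leq\P(E,B)+\P(F,B)-(\theta_{E\cup F}\H)(\partial^e E\cap\partial^e F\cap B),
\]
where the second inequality uses only \((\theta_{E\cup F}\H)(\partial^e E\cap B)=(\theta_E\H)\big(\partial^e E\cap\partial^e(E\cup F)\cap B\big)\leq\P(E,B)\) (isotropicity plus \(\theta_{E\cup F}=0\) off \(\partial^e(E\cup F)\)). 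The hypothesis forces both inequalities to be equalities; from the first you get \(\H\big(\partial^e E\cap\partial^e F\cap\partial^e(E\cup F)\cap B\big)=0\), and from the saturation of the second you get \(\H\big((\partial^e E\cap B)\setminus\partial^e(E\cup F)\big)=0\). Combining these gives the claim. Your mass-comparison idea is the right one, but the order of operations must be reversed.
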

\begin{proof} {\color{blue}i)} Suppose that \(\P(E\cup F,B)=\P(E,B)+\P(F,B)\).
A trivial set-theoretic argument yields
\[\begin{split}
&(\theta_{E\cup F}\H)\big((\partial^e E\cup\partial^e F)\cap B\big)\\
=\,&(\theta_{E\cup F}\H)\big((\partial^e E\setminus\partial^e F)\cap B\big)+
(\theta_{E\cup F}\H)\big((\partial^e F\setminus\partial^e E)\cap B\big)+
(\theta_{E\cup F}\H)(\partial^e E\cap\partial^e F\cap B)\\
=\,&(\theta_{E\cup F}\H)(\partial^e E\cap B)+(\theta_{E\cup F}\H)(\partial^e F\cap B)-
(\theta_{E\cup F}\H)(\partial^e E\cap\partial^e F\cap B).
\end{split}\]
Given that \(\theta_{E\cup F}\) is assumed to be null on the complement of
\(\partial^e(E\cup F)\), we deduce that
\[\begin{split}
(\theta_{E\cup F}\H)(\partial^e E\cap B)&=
(\theta_{E\cup F}\H)\big(\partial^e E\cap\partial^e(E\cup F)\cap B\big)
\overset{\eqref{eq:isotropic_def}}=
(\theta_E\H)\big(\partial^e E\cap\partial^e(E\cup F)\cap B\big),\\
(\theta_{E\cup F}\H)(\partial^e F\cap B)&=
(\theta_{E\cup F}\H)\big(\partial^e F\cap\partial^e(E\cup F)\cap B\big)
\overset{\eqref{eq:isotropic_def}}=
(\theta_F\H)\big(\partial^e F\cap\partial^e(E\cup F)\cap B\big).
\end{split}\]
Accordingly, it holds that
\[\begin{split}
\P(E\cup F,B)&\overset{\eqref{eq:representation_perimeter}}=
(\theta_{E\cup F}\H)\big(\partial^e(E\cup F)\cap B\big)
\overset{\eqref{eq:inclusion_ess_bdry}}\leq
(\theta_{E\cup F}\H)\big((\partial^e E\cup\partial^e F)\cap B\big)\\
&\overset{\phantom{\eqref{eq:representation_perimeter}}}=
(\theta_{E\cup F}\H)(\partial^e E\cap B)+(\theta_{E\cup F}\H)(\partial^e F\cap B)
-(\theta_{E\cup F}\H)(\partial^e E\cap\partial^e F\cap B)\\
&\overset{\phantom{\eqref{eq:representation_perimeter}}}\leq
(\theta_E\H)(\partial^e E\cap B)+(\theta_F\H)(\partial^e F\cap B)
-(\theta_{E\cup F}\H)(\partial^e E\cap\partial^e F\cap B)\\
&\overset{\eqref{eq:representation_perimeter}}=
\P(E,B)+\P(F,B)-(\theta_{E\cup F}\H)(\partial^e E\cap\partial^e F\cap B)\\
&\overset{\phantom{\eqref{eq:representation_perimeter}}}=
\P(E\cup F,B)-(\theta_{E\cup F}\H)(\partial^e E\cap\partial^e F\cap B),
\end{split}\]
which forces the equality \((\theta_{E\cup F}\H)(\partial^e E\cap\partial^e F\cap B)=0\).
Since \(\theta_{E\cup F}\geq\gamma'_{E\cup F}>0\) on \(\partial^e(E\cup F)\),
we obtain that \(\H\big(\partial^e E\cap\partial^e F\cap\partial^e(E\cup F)\cap B\big)=0\).
Moreover, we have that
\[\begin{split}
\P(E,B)&=(\theta_E\H)\big(\partial^e E\cap\partial^e(E\cup F)\cap B\big)+
(\theta_E\H)\big((\partial^e E\cap B)\setminus\partial^e(E\cup F)\big)\\
&=(\theta_{E\cup F}\H)\big(\partial^e E\cap\partial^e(E\cup F)\cap B\big)+
(\theta_E\H)\big((\partial^e E\cap B)\setminus\partial^e(E\cup F)\big)\\
&=\P(E\cup F,\partial^e E\cap B)+
(\theta_E\H)\big((\partial^e E\cap B)\setminus\partial^e(E\cup F)\big)
\end{split}\]
and, similarly, that \(\P(F,B)=\P(E\cup F,\partial^e F\cap B)+
(\theta_F\H)\big((\partial^e F\cap B)\setminus\partial^e(E\cup F)\big)\). This yields
\[\begin{split}
&\P(E\cup F,B)=\P\big(E\cup F,\partial^e(E\cup F)\cap B\big)
\leq\P(E\cup F,\partial^e E\cap B)+\P(E\cup F,\partial^e F\cap B)\\
=&\P(E,B)+\P(F,B)-(\theta_E\H)\big((\partial^e E\cap B)\setminus\partial^e(E\cup F)\big)
-(\theta_F\H)\big((\partial^e F\cap B)\setminus\partial^e(E\cup F)\big)\\
=&\P(E\cup F,B)-(\theta_E\H)\big((\partial^e E\cap B)\setminus\partial^e(E\cup F)\big)
-(\theta_F\H)\big((\partial^e F\cap B)\setminus\partial^e(E\cup F)\big).
\end{split}\]
Hence, we conclude that
\((\theta_E\H)\big((\partial^e E\cap B)\setminus\partial^e(E\cup F)\big)=0\) and
\((\theta_F\H)\big((\partial^e F\cap B)\setminus\partial^e(E\cup F)\big)=0\).
Since \(\theta_E\geq\gamma'_E>0\) on \(\partial^e E\) and
\(\theta_F\geq\gamma'_F>0\) on \(\partial^e F\), we get that
\(\H\big((\partial^e E\cap B)\setminus\partial^e(E\cup F)\big)=0\)
and \(\H\big((\partial^e F\cap B)\setminus\partial^e(E\cup F)\big)=0\).
In particular, we have
\(\H\big((\partial^e E\cap\partial^e F\cap B)\setminus\partial^e(E\cup F)\big)=0\).
Consequently, we have finally proven that \(\H(\partial^e E\cap\partial^e F\cap B)=0\),
as required.\\
{\color{blue}ii)} Let us suppose that \(\mm(E\cap F)=0\) and
\(\H(\partial^e E\cap\partial^e F\cap B)=0\). We already know that the inequality
\(\P(E\cup F,B)\leq\P(E,B)+\P(F,B)\) is always verified. The converse inequality
readily follows from our assumptions, item iv) of Proposition
\ref{prop:properties_ess_bdry} and the representation formula for the perimeter measure:
\[\begin{split}
\P(E,B)+\P(F,B)=&(\theta_E\H)(\partial^e E\cap B)+(\theta_F\H)(\partial^e F\cap B)\\
=&(\theta_E\H)\big((\partial^e E\setminus\partial^e F)\cap B\big)+
(\theta_E\H)(\partial^e E\cap\partial^e F\cap B)\\
&+(\theta_F\H)\big((\partial^e F\setminus\partial^e E)\cap B\big)+
(\theta_F\H)(\partial^e F\cap\partial^e E\cap B)\\
=&(\theta_{E\cup F}\H)\big((\partial^e E\setminus\partial^e F)\cap B\big)
+(\theta_{E\cup F}\H)\big((\partial^e F\setminus\partial^e E)\cap B\big)\\
=&(\theta_{E\cup F}\H)\big((\partial^e E\Delta\partial^e F)\cap B\big)
\leq(\theta_{E\cup F}\H)\big(\partial^e(E\cup F)\cap B\big)\\
=&\P(E\cup F,B).
\end{split}\]
Therefore, it holds that \(\P(E\cup F,B)=\P(E,B)+\P(F,B)\), as required.
\end{proof}
\begin{remark}{\rm
Item i) of Lemma \ref{lem:equiv_P_additive} fails for the non-isotropic space
in Example \ref{ex:non_isotropic}: it holds that
\(\P(E_1\cup E_2)=\P(E_1)+\P(E_2)\), but \(\partial^e E_1\cap\partial^e E_2=\{V\}\)
with \(\H(\{V\})>0\).
\fr}\end{remark}
In the setting of isotropic PI spaces satisfying \eqref{eq:extra_hp_PI},
the property of being an indecomposable set of finite perimeter can be
equivalently characterised as illustrated by the following result,
which constitutes a generalisation of \cite[Proposition 2.12]{DM95}.
\begin{theorem}\label{thm:equiv_indecomp}
Let \((\X,\sfd,\mm)\) be a PI space. Then the following properties hold:
\begin{itemize}
\item[\(\rm i)\)] Let \(E\subset\X\) be a set of finite perimeter such that
\begin{equation}\label{eq:equiv_indecomp}
f\in L^1_\loc(\mm),\;|Df|(\X)<+\infty,\;|Df|(E^1)=0\quad\Longrightarrow
\quad\begin{array}{ll}
f=t\text{ holds }\mm\text{-a.e.\ on }E,\\
\text{for some constant }t\in\R.
\end{array}
\end{equation}
Then \(E\) is indecomposable.
\item[\(\rm ii)\)] Suppose \((\X,\sfd,\mm)\) is isotropic and satisfies
\eqref{eq:extra_hp_PI}. Then any indecomposable subset of \(\X\) satisfies
property \eqref{eq:equiv_indecomp}.
\end{itemize}
\end{theorem}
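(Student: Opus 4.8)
For item i), I would argue by contraposition. Suppose $E$ is decomposable, so there is a partition $\{F,G\}$ of $E$ into sets of finite perimeter with $\mm(F),\mm(G)>0$ and $\P(E)=\P(F)+\P(G)$. The natural candidate for a function violating the right-hand side of \eqref{eq:equiv_indecomp} is $f\coloneqq\1_F$ (equivalently $\1_E-\1_G$), which is constant on neither $E$ piece simultaneously since both $F$ and $G$ are non-negligible. It remains to check $f\in L^1_\loc(\mm)$, $|Df|(\X)<+\infty$, and crucially $|Df|(E^1)=0$. The first two are clear: $F$ has finite perimeter and finite measure (being a subset of $E$ modulo the usual reductions, or more carefully we may first intersect with large balls — but since $F\subset E$ and $\1_E\in L^1$, actually $F$ has finite measure too if $\mm(E)<\infty$; in the general case one works locally, as the statement only asks $f\in L^1_\loc$). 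For $|Df|(E^1)=0$: by Lemma \ref{lem:incl_front_comp} applied to $F\subset E$ (using $\P(E)=\P(F)+\P(E\setminus F)=\P(F)+\P(G)$) we get $\H(\partial^e F\setminus\partial^e E)=0$, hence $\P(F,\cdot)=|D\1_F|$ is concentrated on $\partial^e F\subset\partial^e E$; since $\partial^e E\cap E^1=\emptyset$ (noted in the excerpt right before Remark \eqref{eq:incl_ess_clos}), we conclude $|Df|(E^1)\le\P(F,\partial^e E\cap E^1)=0$. Thus $f$ satisfies the hypothesis of \eqref{eq:equiv_indecomp} but is not $\mm$-a.e.\ constant on $E$, contradicting the assumption. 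This direction uses only that $\X$ is a PI space, as stated.

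For item ii), assume $E$ is indecomposable and that $\X$ is isotropic and satisfies \eqref{eq:extra_hp_PI}; I must show any $f$ as on the left of \eqref{eq:equiv_indecomp} is $\mm$-a.e.\ constant on $E$. The plan is a coarea/superlevel-set argument. For $t\in\R$ set $E_t\coloneqq E\cap\{f>t\}$ and $E_t'\coloneqq E\cap\{f\le t\}$; these partition $E$. The key claim is that for a.e.\ $t$ one has $\P(E)=\P(E_t)+\P(E_t')$, so indecomposability forces $\mm(E_t)\in\{0,\mm(E)\}$ for a.e.\ $t$, which (by monotonicity of $t\mapsto\mm(E_t)$) pins down a single threshold $t_0$ with $f=t_0$ $\mm$-a.e.\ on $E$. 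To prove the additivity claim, I would combine the coarea formula (Theorem \ref{thm:coarea}) with the structural hypothesis $|Df|(E^1)=0$: writing $g\coloneqq\1_E f$ (or working with $f$ restricted suitably), the hypothesis $|Df|(E^1)=0$ says the variation of $f$ does not see the essential interior of $E$, so the superlevel sets $\{f>t\}$ interact with $E$ only along $\partial^e E$. Concretely, one shows $\partial^e E_t\subset(\partial^e E\cup\partial^e\{f>t\})$ modulo $\H$-null sets and, using $|Df|(E^1)=0$ together with the coarea formula applied to $f$, that $\H(\partial^e\{f>t\}\cap E^1)$ contributes nothing for a.e.\ $t$; then $\H(\partial^e E_t\cap\partial^e E_t')$ reduces to a portion of $\partial^e E$ that, by isotropicity plus \eqref{eq:extra_hp_PI}, carries no perimeter of the union. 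Invoking item ii) of Lemma \ref{lem:equiv_P_additive} with $B=\X$ then yields $\P(E_t\cup E_t',\X)=\P(E)=\P(E_t)+\P(E_t')$ for a.e.\ $t$.

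The main obstacle I anticipate is the rigorous reduction of $\partial^e E_t$ to $\partial^e E$ for a.e.\ $t$ — that is, showing the hypothesis $|Df|(E^1)=0$ genuinely forces the superlevel-set boundaries to live on $\partial^e E$ up to $\H$-null sets and a null set of parameters $t$. This is the step where the coarea formula, the representation $\P(\cdot)=\theta\,\H\llcorner\partial^e(\cdot)$, and the chain $\partial^e E_t\subset\partial^e E\cup E^1$ (from \eqref{eq:incl_ess_clos}, since $E_t\subset E$) must be meshed carefully: from $E_t\subset E$ we get $\partial^e E_t\subset\partial^e E\cup E^1$ for free, and the content is that the $E^1$-part is $\H$-negligible for a.e.\ $t$, which is exactly what $|Df|(E^1)=0$ buys via coarea ($0=|Df|(E^1)=\int_\R\P(\{f>t\},E^1)\,\d t$, so $\P(E\cap\{f>t\},E^1)=0$ for a.e.\ $t$, forcing $\H(\partial^e E_t\cap E^1)=0$ for a.e.\ $t$ since $\theta_{E_t}\ge\gamma>0$ there). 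Once that is in hand, isotropicity and \eqref{eq:extra_hp_PI} handle the overlap $\partial^e E_t\cap\partial^e E_t'$ on $\partial^e E$ in a routine way via Lemma \ref{lem:equiv_P_additive}(ii), and monotonicity of $t\mapsto\mm(E_t)$ closes the argument.
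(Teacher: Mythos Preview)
Your proposal is correct and follows essentially the same route as the paper: contraposition with \(f=\1_F\) for item i), and for item ii) the coarea argument showing \(\H(\partial^e E_t\cap E^1)=0\) for a.e.\ \(t\), then using \eqref{eq:extra_hp_PI} and \eqref{eq:incl_ess_clos} to feed into Lemma \ref{lem:equiv_P_additive}(ii) and conclude by indecomposability. Two small points to tighten: the identity \(|Df|(E^1)=\int_\R\P(\{f>t\},E^1)\,\d t\) needs the outer-regularity extension of Theorem \ref{thm:coarea} from open to Borel sets (the paper does this explicitly), and your passage from \(\P(\{f>t\},E^1)=0\) to \(\P(E\cap\{f>t\},E^1)=0\) should be justified---either via the paper's auxiliary inequality \(\P(E\cap A,E^1)\le\P(A,E^1)\) (which uses isotropicity), or more directly by noting \(\partial^e E_t\cap E^1\subset\partial^e\{f>t\}\cap E^1\) since \(\partial^e E_t\subset\partial^e E\cup\partial^e\{f>t\}\) and \(\partial^e E\cap E^1=\emptyset\).
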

\begin{proof}
{\color{blue}i)} Suppose \(E\subset\X\) is decomposable. Choose
two disjoint sets of finite perimeter \(F,G\subset\X\) having positive
\(\mm\)-measure such that \(E=F\cup G\) and \(\P(E)=\P(F)+\P(G)\).
Then let us consider the function \(f\coloneqq\1_F\in L^1_\loc(\mm)\).
Notice that \(|Df|(\X)=\P(F)<+\infty\). Moreover, we know from
Lemma \ref{lem:incl_front_comp} that \(\H(\partial^e F\setminus\partial^e E)=0\),
thus accordingly
\[
|Df|(E^1)=(\theta_F\H)(\partial^e F\cap E^1)
\leq(\theta_F\H)(\partial^e E\cap E^1)=0.
\]
Nevertheless, \(f\) is not \(\mm\)-a.e.\ equal to a constant on \(E\),
whence \(E\) does not satisfy property \eqref{eq:equiv_indecomp}.\\
{\color{blue}ii)} Fix an indecomposable set \(E\subset\X\). Consider any function
\(f\in L^1_\loc(\mm)\) such that \(|Df|(\X)<+\infty\) and \(|Df|(E^1)=0\).
First of all, we claim that
\begin{equation}\label{eq:equiv_indecomp_claim}
\P(E\cap A,E^1)\leq\P(A,E^1)\quad
\text{ for every set }A\subset\X\text{ of finite perimeter.}
\end{equation}
Indeed, by exploiting the inclusion
\(\partial^e(E\cap A)\subset\partial^e E\cup\partial^e A\)
and the isotropicity of \((\X,\sfd,\mm)\) we get
\[\begin{split}
\P(E\cap A,E^1)&=(\theta_{E\cap A}\H)\big(\partial^e(E\cap A)\cap E^1\big)
=(\theta_{E\cap A}\H)\big(\partial^e(E\cap A)\cap
(\partial^e E\cup\partial^e A)\cap E^1\big)\\
&\leq(\theta_{E\cap A}\H)\big(\partial^e(E\cap A)\cap(\partial^e E\cap E^1)\big)+
(\theta_{E\cap A}\H)\big(\partial^e(E\cap A)\cap\partial^e A\cap E^1\big)\\
&=(\theta_{E\cap A}\H)\big(\partial^e(E\cap A)\cap\partial^e A\cap E^1\big)
=(\theta_A\H)\big(\partial^e(E\cap A)\cap\partial^e A\cap E^1\big)\\
&\leq(\theta_A\H)(\partial^e A\cap E^1)=\P(A,E^1),
\end{split}\]
whence the claim \eqref{eq:equiv_indecomp_claim} follows.
Now let us define the finite Borel measure \(\mu\) on \(\X\) as
\[
\mu(B)\coloneqq\int_\R\P\big(\{f>t\},B\big)\,\d t
\quad\text{ for every Borel set }B\subset\X.
\]
Since \(|Df|(\Omega)=\mu(\Omega)\) for every open set \(\Omega\subset\X\)
by Theorem \ref{thm:coarea}, we deduce that \(|Df|=\mu\) by outer regularity.
In particular, it holds that \(\int_\R\P\big(\{f>t\},E^1\big)\,\d t=|Df|(E^1)=0\),
which in turn forces the identity \(\P\big(\{f>t\},E^1\big)=0\) for a.e.\ \(t\in\R\).
Calling \(E^+_t\coloneqq E\cap\{f>t\}\) for all \(t\in\R\), we thus infer
from \eqref{eq:equiv_indecomp_claim} that \(\P(E^+_t,E^1)=0\) for a.e.\ \(t\in\R\),
so that in particular \(\H(\partial^e E^+_t\cap E^1)=0\) for a.e.\ \(t\in\R\).
Also, we have
\(\H\big(\partial^e E^+_t\cap\partial^e(E\setminus E^+_t)\cap\partial^e E\big)=0\)
for a.e.\ \(t\in\R\) by \eqref{eq:extra_hp_PI}, whence
\[\begin{split}
\H\big(\partial^e E^+_t\cap\partial^e(E\setminus E^+_t)\big)
&\overset{\eqref{eq:incl_ess_clos}}\leq
\H\big(\partial^e E^+_t\cap\partial^e(E\setminus E^+_t)\cap\partial^e E\big)
+\H\big(\partial^e E^+_t\cap\partial^e(E\setminus E^+_t)\cap E^1\big)\\
&\overset{\phantom{\eqref{eq:incl_ess_clos}}}=
\H\big(\partial^e E^+_t\cap\partial^e(E\setminus E^+_t)\cap E^1\big)
\leq\H(\partial^e E^+_t\cap E^1)=0
\end{split}\]
holds for a.e.\ \(t\in\R\). Therefore, item ii) of Lemma \ref{lem:equiv_P_additive}
grants that \(\P(E)=\P(E^+_t)+\P(E\setminus E^+_t)\) for a.e.\ \(t\in\R\).
Being \(E\) indecomposable, we deduce that for a.e.\ \(t\in\R\)
we have that either \(\mm(E^+_t)=0\) or \(\mm(E\setminus E^+_t)=0\).
Define \(E^-_t\coloneqq E\cap\{f<t\}\) for all \(t\in\R\).
Pick a negligible set \(N\subset\R\) such that
\begin{equation}\label{eq:equiv_indecomp_aux2}
\text{either }\mm(E^+_t)=0\text{ or }\mm(E^-_t)=0
\quad\text{ for any }t\in\R\setminus N.
\end{equation}
Let us define \(t_-,t_+\in\R\) as follows:
\[\begin{split}
t_-&\coloneqq\sup\big\{t\in\R\setminus N\;\big|\;\mm(E^-_t)=0\big\},\\
t_+&\coloneqq\inf\big\{t\in\R\setminus N\;\big|\;\mm(E^+_t)=0\big\}.
\end{split}\]
We claim that \(\mm(E^-_{t_-})=\mm(E^+_{t_+})=0\). Indeed, given any sequence
\((t_n)_n\subset\R\setminus N\) such that \(t_n\nearrow t_-\)
and \(\mm(E^-_{t_n})=0\) for all \(n\in\N\), we have that
\(E^-_{t_-}=\bigcup_n E^-_{t_n}\) and accordingly \(\mm(E^-_{t_-})=0\).
Similarly for \(E^+_{t_+}\). In light of this observation,
we see that \(t_-\leq t_+\), otherwise we would have \(E=E^-_{t_-}\cup E^+_{t_+}\)
and thus \(\mm(E)\leq\mm(E^-_{t_-})+\mm(E^+_{t_+})=0\). We now argue
by contradiction: suppose \(t_-<t_+\). Then it holds that
\(\mm(E^-_t),\mm(E^+_t)>0\) for every \(t\in(t_-,t_+)\setminus N\)
by definition of \(t_\pm\). This leads to a contradiction with
\eqref{eq:equiv_indecomp_aux2}. Then one has \(t_-=t_+\),
so that \(\mm\big(E\cap \{f\neq t_-\}\big)=\mm(E^-_{t_-})+\mm(E^+_{t_+})=0\).
This means that \(f=t_-\) holds \(\mm\)-a.e.\ on \(E\), which
finally shows that \(E\) satisfies property \eqref{eq:equiv_indecomp}.
\end{proof}
\begin{remark}{\rm
In item ii) of Theorem \ref{thm:equiv_indecomp}, the additional assumptions
on \((\X,\sfd,\mm)\) cannot be dropped. For instance, let us consider the space
described in Example \ref{ex:extra_hp_fails}. Calling \(E\) the indecomposable set
\(E_1\cup E_2\), it holds \(E^1=E\setminus\{V\}\), thus
\(\1_{E_1}\in\BV(\X)\) satisfies \(|D\1_{E_1}|(E^1)=0\), but
it is not \(\mm\)-a.e.\ constant on \(E\).
This shows that \(E\) does not satisfy \eqref{eq:equiv_indecomp}.
\fr}\end{remark}
\begin{corollary}\label{cor:open_conn_is_indec}
Let \((\X,\sfd,\mm)\) be a PI space. Let \(\Omega\subset\X\) be an open,
connected set of finite perimeter. Then \(\Omega\) is indecomposable.
\end{corollary}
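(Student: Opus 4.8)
The plan is to deduce the statement from item i) of Theorem \ref{thm:equiv_indecomp}: it suffices to check that $\Omega$ satisfies the implication \eqref{eq:equiv_indecomp}, i.e.\ that any $f\in L^1_\loc(\mm)$ with $|Df|(\X)<+\infty$ and $|Df|(\Omega^1)=0$ is $\mm$-a.e.\ constant on $\Omega$. First I would observe that, since $\Omega$ is open, every $x\in\Omega$ has $D(\Omega,x)=1$: choosing $r_0>0$ with $B_{r_0}(x)\subset\Omega$ gives $\mm\big(\Omega\cap B_r(x)\big)=\mm\big(B_r(x)\big)$ for all $r\in(0,r_0]$. Hence $\Omega\subset\Omega^1$, and therefore $|Df|(\Omega)\leq|Df|(\Omega^1)=0$.

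Next I would show that such an $f$ is locally $\mm$-a.e.\ constant on $\Omega$. Fixing $x\in\Omega$ and picking $r>0$ with $B_{\lambda r}(x)\subset\Omega$, the Poincar\'e inequality for BV functions (Lemma \ref{lem:Poincare_ineq_BV}) yields
\[
\int_{B_r(x)}|f-f_{x,r}|\,\d\mm\leq C_P\,r\,|Df|\big(B_{\lambda r}(x)\big)\leq C_P\,r\,|Df|(\Omega)=0,
\]
so that $f=f_{x,r}$ holds $\mm$-a.e.\ on $B_r(x)$. Thus each point of $\Omega$ admits an open ball contained in $\Omega$ on which $f$ agrees $\mm$-a.e.\ with a real constant.

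Finally I would globalise this via connectedness. Fix $x_0\in\Omega$, let $c\in\R$ be the constant value of $f$ on a ball around $x_0$ as above, and set $A\coloneqq\big\{x\in\Omega\;\big|\;f=c\ \mm\text{-a.e.\ on some open neighbourhood of }x\big\}$. The set $A$ is open by construction and contains $x_0$. It is also relatively closed in $\Omega$: if $x\in\Omega$ lies in the closure of $A$, the previous step provides an open ball $B_\rho(x)\subset\Omega$ on which $f$ is $\mm$-a.e.\ equal to some constant $c'$; taking $x_n\in A$ with $x_n\to x$, for $n$ large the neighbourhood of $x_n$ on which $f=c$ $\mm$-a.e.\ meets $B_\rho(x)$ in a nonempty open set, which has positive $\mm$-measure because the doubling measure $\mm$ has full support. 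Comparing the two $\mm$-a.e.\ values on that intersection forces $c'=c$, hence $x\in A$. Since $\Omega$ is connected, $A=\Omega$, so $\Omega$ is covered by open balls on each of which $f=c$ $\mm$-a.e.; extracting a countable subcover (using that $\X$ is separable) gives $f=c$ $\mm$-a.e.\ on $\Omega$. This verifies \eqref{eq:equiv_indecomp} for $\Omega$, and item i) of Theorem \ref{thm:equiv_indecomp} then gives that $\Omega$ is indecomposable. I expect the only delicate point to be this last globalisation step, where topological connectedness must be reconciled with merely $\mm$-a.e.\ local constancy; it works precisely because nonempty open sets carry positive $\mm$-mass.
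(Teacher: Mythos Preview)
Your proof is correct and follows essentially the same route as the paper's: verify \eqref{eq:equiv_indecomp} by using openness to get $\Omega\subset\Omega^1$ and hence $|Df|(\Omega)=0$, then apply the BV Poincar\'e inequality (Lemma \ref{lem:Poincare_ineq_BV}) on small balls to obtain local $\mm$-a.e.\ constancy, and finally pass from local to global constancy via connectedness before invoking item i) of Theorem \ref{thm:equiv_indecomp}. The only differences are that the paper asserts $\Omega^1=\Omega$ (you more carefully write $\Omega\subset\Omega^1$, which is all that is needed) and that the paper leaves the ``locally $\mm$-a.e.\ constant on a connected set $\Rightarrow$ $\mm$-a.e.\ constant'' step implicit, whereas you spell it out with the open--closed argument and a countable subcover.
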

\begin{proof}
Let \(f\in L^1_\loc(\mm)\) satisfy \(|Df|(\X)<+\infty\) and \(|Df|(\Omega^1)=0\).
Being \(\Omega\) open, it holds \(\Omega^1=\Omega\), whence \(|Df|(\Omega)=0\).
Given any \(x\in\Omega\), we can choose a radius \(r>0\)
such that \(B_{\lambda r}(x)\subset\Omega\) and accordingly
\(|Df|\big(B_{\lambda r}(x)\big)=0\), where \(\lambda\geq 1\) is
the constant appearing in the weak \((1,1)\)-Poincar\'{e} inequality.
Consequently, Lemma \ref{lem:Poincare_ineq_BV} tells us that
\(\int_{B_r(x)}|f-f_{x,r}|\,\d\mm=0\), thus in particular
\(f\) is \(\mm\)-a.e.\ constant on \(B_r(x)\). This shows that
\(f\) is locally \(\mm\)-a.e.\ constant on \(\Omega\).
Since \(\Omega\) is connected, we deduce that \(f\) is \(\mm\)-a.e.\ constant
on \(\Omega\). Therefore, we finally conclude that \(\Omega\) is
indecomposable by using item i) of Theorem \ref{thm:equiv_indecomp}.
\end{proof}
\begin{lemma}\label{lem:aux_uniqueness_finite_case}
Let  \((\X,\sfd,\mm)\) be an isotropic PI space. Fix a set \(E\subset\X\) of finite
perimeter and a Borel set \(B\subset\X\). Suppose that \(\{F,G\}\) is a Borel
partition of \(E\) such that \(\P(E,B)=\P(F,B)+\P(G,B)\).
Then it holds that \(\P(A,B)=\P(A\cap F,B)+\P(A\cap G,B)\) for every set
\(A\subset E\) of finite perimeter.
\end{lemma}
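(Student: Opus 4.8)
The plan is to reduce the statement to a single application of Lemma \ref{lem:equiv_P_additive}. First I would record two elementary facts: the sets $A\cap F$ and $A\cap G$ are of finite perimeter — for instance $\P(A\cap F)\le\P(A)+\P(F)$ by subadditivity (item iii) of Proposition \ref{prop:properties_finite_perimeter_sets}), and similarly for $A\cap G$ — and they form a Borel partition of $A$, since $A\subset E=F\cup G$ with $F\cap G=\emptyset$. Hence the desired identity $\P(A,B)=\P(A\cap F,B)+\P(A\cap G,B)$ is exactly the additivity of $\P(\cdot,B)$ for the disjoint decomposition $A=(A\cap F)\cup(A\cap G)$, and by item ii) of Lemma \ref{lem:equiv_P_additive} it suffices to verify that $\H\big(\partial^e(A\cap F)\cap\partial^e(A\cap G)\cap B\big)=0$.

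To obtain this, I would first feed the hypothesis into Lemma \ref{lem:equiv_P_additive}: applying its item i) to the pair $F,G$ — for which $\P(F\cup G,B)=\P(E,B)=\P(F,B)+\P(G,B)$ — gives $\H(\partial^e F\cap\partial^e G\cap B)=0$. The crux is then the density-theoretic inclusion
\[
\partial^e(A\cap F)\cap\partial^e(A\cap G)\subset\partial^e F\cap\partial^e G .
\]
Indeed, if $x$ belongs to the left-hand set, then using $A\cap F\subset F$, $A\cap G\subset G\subset F^c$ and the monotonicity of the upper density we get $\overline D(F,x)\ge\overline D(A\cap F,x)>0$ and $\overline D(F^c,x)\ge\overline D(A\cap G,x)>0$, so that $x\in\partial^e F$; interchanging the roles of $F$ and $G$ yields $x\in\partial^e G$. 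Combining the inclusion with $\H(\partial^e F\cap\partial^e G\cap B)=0$ we conclude $\H\big(\partial^e(A\cap F)\cap\partial^e(A\cap G)\cap B\big)=0$, and then item ii) of Lemma \ref{lem:equiv_P_additive} (whose other hypothesis $\mm\big((A\cap F)\cap(A\cap G)\big)=0$ is obvious) gives $\P(A,B)=\P(A\cap F,B)+\P(A\cap G,B)$, as required.

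I do not anticipate a genuine obstacle: the single point that demands a moment's care is the displayed inclusion, and its proof rests only on the observation that the disjointness of $F$ and $G$ allows the positivity of $\overline D(A\cap G,x)$ to certify $\overline D(F^c,x)>0$ (and symmetrically). The remainder is bookkeeping and a direct appeal to Lemma \ref{lem:equiv_P_additive}.
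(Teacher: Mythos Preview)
Your argument is correct: the inclusion $\partial^e(A\cap F)\cap\partial^e(A\cap G)\subset\partial^e F\cap\partial^e G$ follows exactly as you say from disjointness of $F,G$ and monotonicity of the upper density, and together with $\H(\partial^e F\cap\partial^e G\cap B)=0$ (from item i) of Lemma~\ref{lem:equiv_P_additive}) this is all that item ii) of the same lemma needs.

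Your route and the paper's share the same opening and closing moves --- invoke item i) of Lemma~\ref{lem:equiv_P_additive} on the hypothesis, then close with item ii) --- but the paper inserts a longer detour in between. It first proves that $\partial^e E$ and $\partial^e F\cup\partial^e G$ agree up to an $\H$-null set in $B$, then upgrades this to the analogous statement $\H\big(\big(\partial^e A\,\Delta\,(\partial^e(A\cap F)\cup\partial^e(A\cap G))\big)\cap B\big)=0$ via several set-theoretic inclusions, and only afterwards extracts $\H\big(\partial^e(A\cap F)\cap\partial^e(A\cap G)\cap B\big)=0$. Your direct density computation bypasses all of this: for the conclusion of the lemma the intermediate identity \eqref{eq:aux_uniqueness_5} is not needed, and your single-line inclusion already delivers the set on which $\H$ must vanish. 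The paper's extra structure may be of independent interest (it describes $\partial^e A$ in terms of the pieces), but for the statement at hand your argument is both shorter and conceptually cleaner.
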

\begin{proof}
First of all, note that
\(\H\big((\partial^e F\cup\partial^e G)\cap(\partial^e E)^c\cap B\big)
\leq\H(\partial^e F\cap\partial^e G\cap B)=0\) by item iv) of Proposition
\ref{prop:properties_ess_bdry} and item i) of Lemma \ref{lem:equiv_P_additive}.
This forces the identity
\begin{equation}\label{eq:aux_uniqueness_1}
\H\Big(\big(\partial^e E\Delta(\partial^e F\cup\partial^e G)\big)\cap B\Big)=0.
\end{equation}
Now fix any set \(A\subset E\) of finite perimeter.
By using again the property \eqref{eq:inclusion_ess_bdry} we see that
\begin{equation}\label{eq:aux_uniqueness_2}
\partial^e A\cap B\subset
\big(\partial^e(A\cap F)\cup\partial^e(A\cap G)\big)\cap B.
\end{equation}
On the other hand, we claim that
\begin{equation}\label{eq:aux_uniqueness_3}
(\partial^e A)^c\cap B\subset\Big(\big(\partial^e(A\cap F)
\cup\partial^e(A\cap G)\big)^c\cup(\partial^e E)^c\Big)\cap B.
\end{equation}
Indeed, pick any \(x\in(\partial^e A)^c\), thus either \(D(A,x)=0\)
or \(D(A^c,x)=0\). In the former case we deduce that
\(D(A\cap F,x),D(A\cap G,x)\leq D(A,x)=0\), so that
\(x\notin\partial^e(A\cap F)\cup\partial^e(A\cap G)\).
In the latter case we have \(D(E^c,x)\leq D(A^c,x)=0\),
whence \(x\notin\partial^e E\). This shows the validity
of \eqref{eq:aux_uniqueness_3}.

Moreover, notice that
\((\partial^e A)^c\cap(\partial^e F)^c\subset\big(\partial^e(A\cap F)\big)^c\) and
\((\partial^e A)^c\cap(\partial^e G)^c\subset\big(\partial^e(A\cap G)\big)^c\) hold
by property \eqref{eq:inclusion_ess_bdry}, thus accordingly we have that
\begin{equation}\label{eq:aux_uniqueness_4}
(\partial^e A)^c\cap(\partial^e F\cup\partial^e G)^c\cap B
\subset\big(\partial^e(A\cap F)\cup\partial^e(A\cap G)\big)^c\cap B.
\end{equation}
By combining \eqref{eq:aux_uniqueness_1}, \eqref{eq:aux_uniqueness_2},
\eqref{eq:aux_uniqueness_3} and \eqref{eq:aux_uniqueness_4}, we deduce that
\begin{equation}\label{eq:aux_uniqueness_5}
\H\Big(\big(\partial^e A\Delta\big(\partial^e(A\cap F)
\cup\partial^e(A\cap G)\big)\big)\cap B\Big)=0.
\end{equation}
Since \(\P(E,B)=\P(F,B)+\P(G,B)\), we know from item i) of Lemma
\ref{lem:equiv_P_additive} that \(\H(\partial^e F\cap\partial^e G\cap B)=0\).
Property \eqref{eq:inclusion_ess_bdry} ensures that
\(\partial^e(A\cap F)\cap\partial^e(A\cap G)\subset
\partial^e A\cap(\partial^e F\cap\partial^e G)\), which together
with the identities \(\H(\partial^e F\cap\partial^e G\cap B)=0\)
and \eqref{eq:aux_uniqueness_5} yield
\(\H\big(\partial^e(A\cap F)\cap\partial^e(A\cap G)\cap B\big)=0\).
Therefore, item ii) of Lemma \ref{lem:equiv_P_additive} gives 
\(\P(A,B)=\P(A\cap F,B)+\P(A\cap G,B)\), thus proving the statement.
\end{proof}
\begin{corollary}\label{cor:aux_uniqueness}
Let \((\X,\sfd,\mm)\) be an isotropic PI space. Fix \(E\subset\X\)
of finite perimeter and \(\Omega\subset\X\) open. Suppose
that \((E_n)_n\) is a Borel partition of \(E\) such that
\(\P(E,\Omega)=\sum_{n=0}^\infty\P(E_n,\Omega)\). Then
\[
\P(F,\Omega)=\sum_{n=0}^\infty\P(F\cap E_n,\Omega)
\quad\text{ for every Borel set }F\subset E\text{ with }\P(F)<+\infty.
\]
\end{corollary}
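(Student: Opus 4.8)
The plan is to prove the statement first for \emph{finite} partitions, by induction on the number of pieces (the two-piece case being exactly Lemma \ref{lem:aux_uniqueness_finite_case}), and then to obtain the countable case by exhausting the partition from below and appealing to the elementary inequality recorded in Remark \ref{rmk:ineq_perim}.

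\emph{Step 1 (finite partitions).} I would show, by induction on \(m\ge 1\), the following: whenever \(\{A_1,\dots,A_m\}\) is a Borel partition of \(E\) into sets of finite perimeter with \(\P(E,\Omega)=\sum_{j=1}^m\P(A_j,\Omega)\), then \(\P(F,\Omega)=\sum_{j=1}^m\P(F\cap A_j,\Omega)\) for every Borel \(F\subset E\) with \(\P(F)<+\infty\). The case \(m=1\) is trivial. For \(m\ge 2\) set \(A':=\bigcup_{j=2}^m A_j=E\setminus A_1\); by the subadditivity and complementation properties in Proposition \ref{prop:properties_finite_perimeter_sets}, both \(A'\) and \(F':=F\setminus A_1=F\cap A'\) have finite perimeter. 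Combining the always-valid bound \(\P(E,\Omega)\le\P(A_1,\Omega)+\P(A',\Omega)\), the bound \(\P(A',\Omega)\le\sum_{j=2}^m\P(A_j,\Omega)\) granted by Remark \ref{rmk:ineq_perim}, and the hypothesis \(\P(A_1,\Omega)+\sum_{j=2}^m\P(A_j,\Omega)=\P(E,\Omega)\), one sees that all these inequalities are in fact equalities; in particular \(\P(E,\Omega)=\P(A_1,\Omega)+\P(A',\Omega)\) and \(\P(A',\Omega)=\sum_{j=2}^m\P(A_j,\Omega)\). Then Lemma \ref{lem:aux_uniqueness_finite_case} applied to the two-piece partition \(\{A_1,A'\}\) of \(E\) yields \(\P(F,\Omega)=\P(F\cap A_1,\Omega)+\P(F',\Omega)\), while the inductive hypothesis applied to the \((m-1)\)-piece partition \(\{A_2,\dots,A_m\}\) of \(A'\) and to \(F'\subset A'\) gives \(\P(F',\Omega)=\sum_{j=2}^m\P(F\cap A_j,\Omega)\) (using that \(F'\cap A_j=F\cap A_j\) for \(j\ge 2\)); adding the two identities completes the induction.

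\emph{Step 2 (countable partitions).} Fix a Borel set \(F\subset E\) with \(\P(F)<+\infty\). For \(N\ge 1\) let \(R_N:=\bigcup_{n\ge N}E_n=E\setminus\bigcup_{n<N}E_n\), which again has finite perimeter by iterated use of subadditivity and complementation. Exactly as in Step 1, comparing \(\P(E,\Omega)\le\sum_{n<N}\P(E_n,\Omega)+\P(R_N,\Omega)\), the bound \(\P(R_N,\Omega)\le\sum_{n\ge N}\P(E_n,\Omega)\) from Remark \ref{rmk:ineq_perim}, and the hypothesis \(\sum_{n}\P(E_n,\Omega)=\P(E,\Omega)\) forces \(\P(E,\Omega)=\sum_{n<N}\P(E_n,\Omega)+\P(R_N,\Omega)\). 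Hence \(\{E_0,\dots,E_{N-1},R_N\}\) is a finite Borel partition of \(E\) into sets of finite perimeter with additive perimeter in \(\Omega\), and Step 1 gives \(\P(F,\Omega)=\sum_{n<N}\P(F\cap E_n,\Omega)+\P(F\cap R_N,\Omega)\ge\sum_{n<N}\P(F\cap E_n,\Omega)\). Letting \(N\to\infty\) yields \(\P(F,\Omega)\ge\sum_{n=0}^\infty\P(F\cap E_n,\Omega)\); since \(\{F\cap E_n\}_n\) is a partition of \(F\) into sets of finite perimeter, the reverse inequality holds automatically by Remark \ref{rmk:ineq_perim}, so equality follows.

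\emph{Expected difficulty.} No step is deep; the only point demanding attention is the bookkeeping — checking that the auxiliary sets \(A'=E\setminus A_1\), \(F'=F\setminus A_1\), the remainders \(R_N\), and the intersections \(F\cap A_j\) all have finite perimeter (a routine consequence of the subadditivity and complementation properties in Proposition \ref{prop:properties_finite_perimeter_sets}), and verifying that the additivity of the perimeter is inherited by the coarsened and restricted partitions, which is precisely where Remark \ref{rmk:ineq_perim} does the work.
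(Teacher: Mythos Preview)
Your proof is correct and follows essentially the same route as the paper: repeated application of Lemma \ref{lem:aux_uniqueness_finite_case} to split off one piece at a time, then passing to the limit and invoking Remark \ref{rmk:ineq_perim} for the reverse inequality. You have merely made explicit the induction and the verification that the coarsened partitions \(\{E_0,\dots,E_{N-1},R_N\}\) inherit additivity of the perimeter in \(\Omega\), which the paper compresses into the phrase ``by repeatedly applying Lemma \ref{lem:aux_uniqueness_finite_case}''.
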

\begin{proof}
Fix any \(N\in\N\). By repeatedly applying Lemma \ref{lem:aux_uniqueness_finite_case}
we obtain that
\[
\P(F,\Omega)=\sum_{n=0}^N\P(F\cap E_n,\Omega)+
\P\Big(\bigcup\nolimits_{n>N}F\cap E_n,\Omega\Big)\geq\sum_{n=0}^N\P(F\cap E_n,\Omega).
\]
By letting \(N\to\infty\) we deduce that
\(\P(F,\Omega)\geq\sum_{n=0}^\infty\P(F\cap E_n,\Omega)\),
which gives the statement thanks to Remark \ref{rmk:ineq_perim}.
\end{proof}
\begin{proposition}[Stability of indecomposable sets]\label{prop:stab_union_indec}
Let \((\X,\sfd,\mm)\) be an isotropic PI space. Fix a set \(E\subset\X\) be of finite
perimeter. Let \((E_n)_n\) be an increasing sequence of indecomposable subsets
of \(\X\) such that \(E=\bigcup_n E_n\). Then \(E\) is an indecomposable set.
\end{proposition}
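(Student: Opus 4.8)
The plan is to argue by contradiction. Suppose that $E$ is decomposable; then there is a Borel partition $\{F,G\}$ of $E$ into sets of finite perimeter with $\mm(F),\mm(G)>0$ and $\P(E)=\P(F)+\P(G)$. The first step is to push this additivity down to each approximating set $E_n$. Since the sequence $(E_n)_n$ is increasing and $E=\bigcup_n E_n$, we have $E_n\subset E$, and $E_n$ has finite perimeter because it is indecomposable; hence Lemma \ref{lem:aux_uniqueness_finite_case}, applied with $B=\X$ and $A=E_n$, yields $\P(E_n)=\P(E_n\cap F)+\P(E_n\cap G)$ for every $n\in\N$. (This is the only place where isotropicity of $(\X,\sfd,\mm)$ is used; everything else is soft.)

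Next, for each fixed $n$ the pair $\{E_n\cap F,\,E_n\cap G\}$ is a Borel partition of the indecomposable set $E_n$ into sets of finite perimeter along which the perimeter is additive, so the definition of indecomposability forces $\mm(E_n\cap F)=0$ or $\mm(E_n\cap G)=0$. I would then set $S\coloneqq\{n\in\N:\mm(E_n\cap F)=0\}$ and $T\coloneqq\{n\in\N:\mm(E_n\cap G)=0\}$, so that $S\cup T=\N$. Monotonicity of $(E_n)_n$ makes both $S$ and $T$ downward closed, since $m\leq n$ gives $\mm(E_m\cap F)\leq\mm(E_n\cap F)$ and likewise for $G$. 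As $S\cup T=\N$, at least one of $S,T$ is infinite, and a downward closed infinite subset of $\N$ is all of $\N$; without loss of generality $S=\N$, i.e.\ $\mm(E_n\cap F)=0$ for every $n$.

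Finally, from $E_n\cap F\uparrow E\cap F=F$ and continuity from below of $\mm$ we obtain $\mm(F)=\lim_n\mm(E_n\cap F)=0$, contradicting $\mm(F)>0$ (and symmetrically, had $T=\N$, we would get $\mm(G)=0$). This contradiction shows that $E$ admits no such decomposition, i.e.\ $E$ is indecomposable. I do not anticipate any genuine obstacle here: the single non-routine ingredient is the transfer step via Lemma \ref{lem:aux_uniqueness_finite_case}, and the remainder is a purely combinatorial monotonicity argument together with $\sigma$-additivity of $\mm$.
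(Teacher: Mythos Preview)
Your proof is correct and follows essentially the same route as the paper: argue by contradiction, then use Lemma \ref{lem:aux_uniqueness_finite_case} to transfer the additivity $\P(E)=\P(F)+\P(G)$ down to $\P(E_n)=\P(E_n\cap F)+\P(E_n\cap G)$, which contradicts indecomposability of $E_n$. The only cosmetic difference is that the paper first uses $\mm(F\cap E_n)\to\mm(F)$ and $\mm(G\cap E_n)\to\mm(G)$ to pick a single $n$ with both intersections of positive measure and gets the contradiction immediately, whereas you apply the lemma for every $n$ and then run a short downward-closed-set argument; both reach the same endpoint with the same key ingredient.
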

\begin{proof}
We argue by contradiction: suppose there exists a Borel partition \(\{F,G\}\)
of the set \(E\) such that \(\mm(F),\mm(G)>0\) and \(\P(E)=\P(F)+\P(G)\). Given that
we have \(\lim_n\mm(F\cap E_n)=\mm(F)\) and \(\lim_n\mm(G\cap E_n)=\mm(G)\), we can
choose an index \(n\in\N\) so that \(\mm(F\cap E_n),\mm(G\cap E_n)>0\).
By Lemma \ref{lem:aux_uniqueness_finite_case} we know that
\(\P(E_n)=\P(F\cap E_n)+\P(G\cap E_n)\). Being \(\{F\cap E_n,G\cap E_n\}\)
a Borel partition of \(E_n\), we get a contradiction with the indecomposability
of \(E_n\). This gives the statement.
\end{proof}
\begin{lemma}\label{lem:stab_indec}
Let \((\X,\sfd,\mm)\) be an isotropic PI space and \(E\subset\X\) a set of finite
perimeter. Fix two Borel sets \(B,B'\subset\X\).
Suppose that \(E\subset B\subset B'\) and that \(E\) is indecomposable
in \(B\). Then it holds that the set \(E\) is indecomposable in \(B'\).
\end{lemma}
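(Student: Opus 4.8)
The plan is to argue by contradiction, reducing decomposability of $E$ in $B'$ to decomposability in $B$. The mechanism is the characterisation of the additivity of the perimeter measure furnished by Lemma \ref{lem:equiv_P_additive}: on an isotropic PI space, the identity $\P(F\cup G,\cdot)=\P(F,\cdot)+\P(G,\cdot)$ on a set is essentially equivalent to the $\H$-negligibility of $\partial^e F\cap\partial^e G$ inside that set, and this latter condition is \emph{monotone} with respect to the set, so it can only get easier when one shrinks $B'$ to $B$.

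Concretely, suppose towards a contradiction that $E$ is decomposable in $B'$. Since $E\subset B\subset B'$ we have $E\cap B'=E\cap B=E$, so there is a Borel partition $\{F,G\}$ of $E$ into sets of finite perimeter with $\mm(F),\mm(G)>0$ and $\P(E,B')=\P(F,B')+\P(G,B')$. Applying item i) of Lemma \ref{lem:equiv_P_additive} (with $E,F$ there replaced by $F,G$ and $B$ there replaced by $B'$), one obtains $\H(\partial^e F\cap\partial^e G\cap B')=0$, hence a fortiori $\H(\partial^e F\cap\partial^e G\cap B)=0$ because $B\subset B'$. Since moreover $\mm(F\cap G)=0$ (the two sets being disjoint), item ii) of the same lemma, now applied on the smaller set $B$, yields $\P(E,B)=\P(F\cup G,B)=\P(F,B)+\P(G,B)$. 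Thus $\{F,G\}$ is a partition of $E\cap B=E$ into sets of finite perimeter of positive $\mm$-measure with $\P(E,B)=\P(F,B)+\P(G,B)$, contradicting the assumed indecomposability of $E$ in $B$.

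There is no real obstacle here: the entire content is that $B\subset B'$ implies $\H(\,\cdot\,\cap B)\le\H(\,\cdot\,\cap B')$, together with the fact that Lemma \ref{lem:equiv_P_additive} converts the (a priori non-monotone) additivity of the perimeter into exactly such a monotone $\H$-quantity. The only points requiring a little care are to verify the hypotheses of both items of Lemma \ref{lem:equiv_P_additive} -- in particular that $F$ and $G$ have finite perimeter, which is built into the definition of decomposability -- and the trivial observation $E\cap B'=E\cap B=E$, which guarantees that the two notions of ``partition of $E\cap(\,\cdot\,)$'' coincide.
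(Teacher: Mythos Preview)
Your proof is correct and follows essentially the same route as the paper's own argument: argue by contradiction, apply item i) of Lemma~\ref{lem:equiv_P_additive} on $B'$ to get $\H(\partial^e F\cap\partial^e G\cap B')=0$, use the monotonicity $B\subset B'$ to pass to $B$, and then apply item ii) of the same lemma on $B$ to contradict indecomposability there. Your additional remarks (that $E\cap B'=E\cap B=E$ and that $\mm(F\cap G)=0$) are small clarifications the paper leaves implicit.
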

\begin{proof}
We argue by contradiction: suppose that there exists a Borel partition
\(\{F,G\}\) of \(E\) such that \(\mm(F),\mm(G)>0\) and \(\P(E,B')=\P(F,B')+\P(G,B')\).
Then item i) of Lemma \ref{lem:equiv_P_additive} implies that
\[
\H(\partial^e F\cap\partial^e G\cap B)\leq\H(\partial^e F\cap\partial^e G\cap B')=0,
\]
whence \(\P(E,B)=\P(F,B)+\P(G,B)\) by item ii) of the same lemma.
This is in contradiction with the fact that \(E\) is indecomposable in \(B\),
thus the statement is proven.
\end{proof}
\subsection{Decomposition theorem}\label{ss:decomposition_thm}
The aim of this subsection is to show that any set of finite perimeter
in an isotropic PI space can be uniquely decomposed into indecomposable sets.
\begin{remark}\label{rmk:converg_sums}{\rm
Let \(\{a^n_i\}_{i,n\in\N}\subset(0,+\infty)\) be a sequence that satisfies
\(\lim_n a^n_i=a_i\) for every \(i\in\N\) and \(\lim_j\lims_n\sum_{i=j}^\infty a^n_i=0\).
Then \(\sum_{i=0}^\infty a_i=\lim_n\sum_{i=0}^\infty a_i^n\).
Indeed, for every \(j\in\N\) we have that
\[\begin{split}
\sum_{i=0}^j a_i&=\lim_{n\to\infty}\sum_{i=0}^j a^n_i
\leq\limi_{n\to\infty}\sum_{i=0}^\infty a^n_i
\leq\lims_{n\to\infty}\sum_{i=0}^\infty a^n_i
=\lims_{n\to\infty}\bigg[\sum_{i=0}^j a^n_i+\sum_{i>j}a^n_i\bigg]\\
&=\sum_{i=0}^j a_i+\lims_{n\to\infty}\sum_{i>j}a^n_i,
\end{split}\]
whence by letting \(j\to\infty\) we conclude that
\(\sum_{i=0}^\infty a_i\leq\limi_n\sum_{i=0}^\infty a^n_i
\leq\lims_n\sum_{i=0}^\infty a^n_i\leq\sum_{i=0}^\infty a_i\),
which proves the claim.
\fr}\end{remark}
\begin{proposition}\label{prop:decomposition_thm_aux}
Let \((\X,\sfd,\mm)\) be an isotropic PI space.
Let \(E\subset\X\) be a set of finite perimeter. Fix \(\bar x\in\X\) and
\(r>0\) such that \(B_r(\bar x)\) has finite perimeter. Then there is a unique
(in the \(\mm\)-a.e.\ sense) at most countable partition
\(\{E_i\}_{i\in I}\) of \(E\cap B_r(\bar x)\), into indecomposable subsets of
\(B_r(\bar x)\), such that \(\P(E_i)<+\infty\), \(\mm(E_i)>0\) for every \(i\in I\)
and \(\P\big(E,B_r(\bar x)\big)=\sum_{i\in I}\P\big(E_i,B_r(\bar x)\big)\).
Moreover, the sets \(\{E_i\}_{i\in I}\) are maximal indecomposable
sets, meaning that for any Borel set \(F\subset E\cap B_r(\bar x)\)
with \(\P(F)<+\infty\) that is indecomposable in \(B_r(\bar x)\) there
is a (unique) \(i\in I\) such that \(\mm(F\setminus E_i)=0\).
\end{proposition}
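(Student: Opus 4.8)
\emph{Strategy.} The plan is to adapt the variational argument of \cite{ACMM01}. Write $B:=B_r(\bar x)$; since $B$ has finite perimeter, so has $E\cap B$ (by subadditivity, item iii) of Proposition~\ref{prop:properties_finite_perimeter_sets}), and $\mm(E\cap B)<+\infty$ because $\mm$ is locally finite and $\overline B$ is compact; if $\mm(E\cap B)=0$ the statement holds with $I=\emptyset$, so assume $\mm(E\cap B)>0$. I would work with the family $\mathcal P$ of all Borel partitions $\{F_i\}$ of $E\cap B$ into sets of positive $\mm$-measure and finite perimeter such that $\sum_i\P(F_i,B)=\P(E,B)$; any such partition is automatically at most countable. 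Then $\mathcal P\neq\emptyset$ (it contains $\{E\cap B\}$), and if a member of $\mathcal P$ has a piece that is decomposable in $B$, replacing that piece by the two halves of such a decomposition still yields an element of $\mathcal P$. On $\mathcal P$ I would maximise the functional $\{F_i\}\mapsto\sum_i\arctan\mm(F_i)$, which is bounded above by $\mm(E\cap B)$ and, by strict concavity of $\arctan$ on $[0,+\infty)$, is \emph{strictly} increased by the splitting operation just described.

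\emph{The limiting partition, and the tightness estimate.} Pick a maximising sequence $\{E^n_i\}_i\in\mathcal P$, reorder each so that $\mm(E^n_1)\geq\mm(E^n_2)\geq\cdots$, and by a diagonal argument together with the compactness of finite-perimeter sets (item v) of Proposition~\ref{prop:properties_finite_perimeter_sets}, used on a fixed ball containing $\overline B$) extract a subsequence along which $\1_{E^n_i}\to\1_{E_i}$ in $L^1_\loc(\mm)$ for every $i$, the $E_i$ pairwise disjoint and of finite perimeter. By lower semicontinuity of the perimeter (item ii) of Proposition~\ref{prop:properties_finite_perimeter_sets}) and Fatou's lemma for series, $\sum_i\P(E_i,B)\leq\P(E,B)$. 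The decisive point is to show that no mass escapes into the tail of the partitions. To this end I would fix $\eps>0$, cover the compact set $\overline{B_{r-\eps}(\bar x)}$ by finitely many balls $B_{\rho_j}(x_j)$ with $B_{2\lambda\rho_j}(x_j)\subset B$, and apply the relative isoperimetric inequality (Theorem~\ref{thm:rel_isoper}) in each of them: since $\mm(E^n_i\cap B_{\rho_j}(x_j))\leq\mm(E^n_i)\leq\mm(E\cap B)/i$ by the reordering, one is below half the measure of $B_{\rho_j}(x_j)$ once $i$ is large, and since $\P(E^n_i,B_{2\lambda\rho_j}(x_j))\leq\P(E^n_i,B)$, this gives $\mm(E^n_i\cap B_{r-\eps}(\bar x))\leq C_\eps\,\P(E^n_i,B)^{s/(s-1)}$. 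Raising to the power $(s-1)/s$, summing over $i\geq J$, using $\sum_i\P(E^n_i,B)=\P(E,B)$ and then factoring out $\sup_{i\geq J}\mm(E^n_i)^{1/s}\leq(\mm(E\cap B)/J)^{1/s}$, one obtains $\sup_n\sum_{i\geq J}\mm(E^n_i\cap B_{r-\eps}(\bar x))\to0$ as $J\to\infty$; since $\mm\big(B\setminus B_{r-\eps}(\bar x)\big)\to0$ as $\eps\to0$ by continuity from below (because $B=\bigcup_{\eps>0}B_{r-\eps}(\bar x)$), this yields $\lim_J\sup_n\sum_{i\geq J}\mm(E^n_i)=0$.

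\emph{Conclusion, maximality and uniqueness.} Granted the tightness, Remark~\ref{rmk:converg_sums} applied to $\mm(E^n_i)$ gives $\sum_i\mm(E_i)=\mm(E\cap B)$, so $\{E_i:\mm(E_i)>0\}$ is a Borel partition of $E\cap B$; then $\sum_i\P(E_i,B)\leq\P(E,B)$ is in fact an equality by Remark~\ref{rmk:ineq_perim}, so this family lies in $\mathcal P$, and Remark~\ref{rmk:converg_sums} applied to $\arctan\mm(E^n_i)$ shows it realises the supremum. Consequently each $E_i$ is indecomposable in $B$: otherwise splitting $E_i$ would produce a competitor in $\mathcal P$ with strictly larger value. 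One also checks that $\P(E_i)<+\infty$. This proves existence. For maximality, let $F\subset E\cap B$ be Borel with $\P(F)<+\infty$ and indecomposable in $B$; Corollary~\ref{cor:aux_uniqueness} (with $\Omega=B$) gives $\P(F,B)=\sum_i\P(F\cap E_i,B)$, and if $\mm(F\cap E_{i_1})>0$ and $\mm(F\cap E_{i_2})>0$ for two distinct indices, then writing $F=(F\cap E_{i_1})\cup\bigcup_{i\neq i_1}(F\cap E_i)$ and using subadditivity together with Remark~\ref{rmk:ineq_perim} one gets $\P(F,B)\geq\P(F\cap E_{i_1},B)+\P\big(\bigcup_{i\neq i_1}F\cap E_i,B\big)\geq\P(F,B)$, so that $F$ would be decomposable in $B$ — a contradiction. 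Hence $\mm(F\setminus E_i)=0$ for a unique $i\in I$. Applying this to the members of any second partition with the stated properties, and symmetrically to the members of $\{E_i\}$, produces mutually inverse index maps and $\mm$-a.e.\ coincidence of the two families, i.e.\ uniqueness.

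\emph{Main obstacle.} The hard part will be the tightness estimate of the second paragraph: the available tool is only the \emph{relative} isoperimetric inequality, so one must force it to play the role of a global isoperimetric inequality inside the fixed ball $B$, and in particular control the mass that the approximating pieces $E^n_i$ may carry near the sphere $\partial B_r(\bar x)$ — handled above by shrinking the ball slightly and invoking continuity from below of $\mm$. A subsidiary, more routine issue is the $L^1_\loc$-compactness extraction of the limiting family (where one works in $L^1(\mm\llcorner_B)$, in which the relative perimeters are bounded by $\P(E,B)$) and the verification that each limit piece $E_i$ has finite perimeter in the whole space.
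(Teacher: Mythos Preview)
Your approach is the paper's approach: maximise a strictly concave functional over admissible partitions, extract a maximiser by compactness plus a tail estimate, and conclude indecomposability by the splitting trick; maximality and uniqueness via Corollary~\ref{cor:aux_uniqueness} are exactly as in the paper. Two points of comparison are worth recording.

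\emph{The tightness step.} Your localisation to small balls covering $B_{r-\eps}(\bar x)$ followed by $\eps\to0$ is correct but more elaborate than what the paper does. The paper applies the relative isoperimetric inequality \emph{once}, on the ball $B_r(\bar x)$ itself: since $E^n_i\subset B_r(\bar x)$, one has $\partial^e E^n_i\subset\overline{B_r(\bar x)}\subset B_{2\lambda r}(\bar x)$, hence $\P\big(E^n_i,B_{2\lambda r}(\bar x)\big)=\P(E^n_i)$, and \eqref{eq:rel_isoper_ineq} gives directly $\mm(E^n_i)\leq C\,\P(E^n_i)^{s/(s-1)}$ for $i\geq1$. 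From there the tail estimate follows by the same $\nicefrac1s$--$\nicefrac{s-1}{s}$ split you use, with no need to shrink the ball or cover by small ones. The price the paper pays for this shortcut is that it must control the \emph{global} perimeters $\P(E^n_i)$, not just $\P(E^n_i,B)$; this is why the paper's class $\mathcal P$ carries the extra constraint $\sum_i\P(E_i)\leq\P(E)+\P(\Omega)$, which you omit.

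\emph{Finite global perimeter of the pieces.} This is where your proposal has a genuine, if small, gap. You write ``one also checks that $\P(E_i)<+\infty$'', but with only $\P(E_i,B)<+\infty$ in hand this is \emph{not} routine: the missing mass $\P(E_i,\partial B)$ is bounded by $C_D\,\H(\partial^e E_i\cap\partial B)$, and while $\partial^e E_i\subset\partial^e(E\cap B)\cup(E\cap B)^1$ by \eqref{eq:incl_ess_clos}, the set $(E\cap B)^1\cap\partial B\subset B^1\cap\partial B$ need not have finite $\H$-measure under the sole hypothesis $\P(B)<+\infty$. The paper avoids this by building the bound $\sum_i\P(E_i)\leq\P(E)+\P(\Omega)$ into $\mathcal P$ from the start; this single constraint simultaneously furnishes compactness (for the diagonal extraction), the tightness estimate (as above), and $\P(E_i)<+\infty$ for the limit pieces by lower semicontinuity. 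If you want to keep your lighter definition of $\mathcal P$, you should either add this constraint back, or else observe that the partition property $\P(E\cap B,B)=\P(E_i,B)+\P\big((E\cap B)\setminus E_i,B\big)$ together with Lemma~\ref{lem:equiv_P_additive} forces $\H\big(\partial^e E_i\cap(E\cap B)^1\cap B\big)=0$, and then argue separately on $\partial B$ --- but that last step still needs an additional hypothesis such as $\H(\partial B)<+\infty$ (cf.\ Remark~\ref{rmk:bdry_balls_finite_H}).

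The choice of $\arctan$ versus the paper's $t\mapsto t^{1/\alpha}$ with $\alpha\in(1,\frac{s}{s-1})$ is immaterial; any bounded strictly concave function works for the splitting contradiction, while the paper's specific choice dovetails with the isoperimetric exponent in the tail computation.
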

\begin{proof} {\color{blue}\textsc{Existence.}}
Fix an exponent \(s>\max\big\{1,\log_2(C_D)\big\}\) and any
\(\alpha\in\big(1,\frac{s}{s-1}\big)\). For brevity, call
\(\Omega\coloneqq B_r(\bar x)\). For simplicity, let us set
\[
\mu(B)\coloneqq\mm(B)^{\nicefrac{1}{\alpha}}
\quad\text{ for every Borel set }B\subset\Omega.
\]
Let us denote by \(\mathcal P\) the collection of all
Borel partitions \((E_i)_{i\in\N}\) of \(E\cap\Omega\)
(up to \(\mm\)-null sets) such that \(\big(\mm(E_i)\big)_{i\in\N}\) is non-increasing,
\(\sum_{i=0}^\infty\P(E_i,\Omega)\leq\P(E,\Omega)\), and
\(\sum_{i=0}^\infty\P(E_i)\leq\P(E)+\P(\Omega)\).
Note that the family \(\mathcal P\) is non-empty, as it contains the element
\((E\cap\Omega,\emptyset,\emptyset,\ldots)\). Let us call
\begin{equation}\label{eq:decomposition_thm_aux_0}
M\coloneqq\sup\bigg\{\sum_{i=0}^\infty\mu(E_i)\;\bigg|
\;\{E_i\}_{i\in\N}\in\mathcal P\bigg\}.
\end{equation}
Choose any \(\big((E^n_i)_{i\in\N}\big)_n\subset\mathcal P\)
such that \(\lim_n\sum_{i=0}^\infty\mu(E^n_i)=M\).
Since \(\P(E^n_i)\leq\P(E)+\P(\Omega)<+\infty\) for every \(i,n\in\N\),
we know by the compactness properties of sets of
finite perimeter that we can extract a (not relabeled) subsequence in \(n\)
in such a way that the following property holds: there exists a sequence
\((E_i)_{i\in\N}\) of Borel subsets of \(E\cap\Omega\) such that
\(\1_{E^n_i}\to\1_{E_i}\) in \(L^1(\mm\llcorner_\Omega)\), thus
\begin{equation}\label{eq:decomposition_thm_aux_1}
\lim_{n\to\infty}\mu(E^n_i)=\mu(E_i)\quad\text{ for every }i\in\N.
\end{equation}
Given any \(i,j\in\N\) such that \(i\neq j\), we also have that
\(\mu(E_i\cap E_j)=\lim_n\mu(E^n_i\cap E^n_j)=0\), thus accordingly
\(\mm(E_i\cap E_j)=0\). Moreover, by lower semicontinuity of the perimeter
we see that
\[\begin{split}
\sum_{i=0}^\infty\P(E_i,\Omega)&=\lim_{j\to\infty}\sum_{i=0}^j\P(E_i,\Omega)
\leq\lim_{j\to\infty}\sum_{i=0}^j\limi_{n\to\infty}\P(E^n_i,\Omega)\\
&\leq\lim_{j\to\infty}\limi_{n\to\infty}\sum_{i=0}^j\P(E^n_i,\Omega)\leq\P(E,\Omega)
\end{split}\]
and, similarly, that \(\sum_{i=0}^\infty\P(E_i)\leq\P(E)+\P(\Omega)\) for every \(i\in\N\).
To prove that \((E_i)_{i\in\N}\in\mathcal P\) it only remains to show that
\(\mm\big((E\cap\Omega)\setminus\bigcup_i E_i\big)=0\). We claim that
\begin{equation}\label{eq:decomposition_thm_aux_2}
\lim_{j\to\infty}\lims_{n\to\infty}\sum_{i=j}^\infty\mu(E^n_i)^\alpha\leq
\lim_{j\to\infty}\lims_{n\to\infty}\sum_{i=j}^\infty\mu(E^n_i)=0.
\end{equation}
Observe that the inequality \(\mm(E^n_i)\leq\mm(\Omega\setminus E^n_i)\) holds
for every \(i\geq 1\). Let us define
\[
\eta\coloneqq\frac{1}{\alpha}-\frac{s-1}{s}>0,\qquad
C\coloneqq C_I
\bigg(\frac{r^s}{\mm(\Omega)}\bigg)^{\nicefrac{1}{s-1}}.
\]
We readily deduce from the relative isoperimetric inequality
\eqref{eq:rel_isoper_ineq} that for all \(j\geq 1\) we have
\[\begin{split}
j\,\mm(E^n_j)&\leq\sum_{i=1}^j\mm(E^n_i)\leq
C\sum_{i=1}^j\P\big(E^n_i,B_{2\lambda r}(\bar x)\big)^{\nicefrac{s}{s-1}}
=C\sum_{i=1}^j\P(E^n_i)^{\nicefrac{s}{s-1}}\\
&=C\,\big[\P(E)+\P(\Omega)\big]^{\nicefrac{s}{s-1}}
\sum_{i=1}^j\bigg(\frac{\P(E^n_i)}{\P(E)+\P(\Omega)}\bigg)^{\nicefrac{s}{s-1}}\\
&\leq C\,\big[\P(E)+\P(\Omega)\big]^{\nicefrac{s}{s-1}}
\sum_{i=1}^j\frac{\P(E^n_i)}{\P(E)+\P(\Omega)}\\
&\leq C\,\big[\P(E)+\P(\Omega)\big]^{\nicefrac{s}{s-1}}.
\end{split}\]
Furthermore, by using the previous estimate and again
\eqref{eq:representation_perimeter} we obtain that
\[\begin{split}
\sum_{i=j}^\infty\mu(E^n_i)&=\sum_{i=j}^\infty\mm(E^n_i)^{\nicefrac{1}{\alpha}}
=\sum_{i=j}^\infty\mm(E^n_i)^\eta\,\mm(E^n_i)^{\nicefrac{(s-1)}{s}}
\leq\mm(E^n_j)^\eta\sum_{i=j}^\infty\mm(E^n_i)^{\nicefrac{(s-1)}{s}}\\
&\leq\frac{C^\eta\,\big[\P(E)+\P(\Omega)\big]^{\nicefrac{\eta s}{s-1}}}{j^\eta}
\sum_{i=j}^\infty\mm(E^n_i)^{\nicefrac{(s-1)}{s}}\\
&\leq\frac{C^\eta\,\big[\P(E)+\P(\Omega)\big]^{\nicefrac{\eta s}{s-1}}}{j^\eta}
\,C^{\nicefrac{(s-1)}{s}}\sum_{i=j}^\infty\P\big(E^n_i,B_{2\lambda r}(\bar x)\big)\\
&=\frac{C^{\nicefrac{1}{\alpha}}\,
\big[\P(E)+\P(\Omega)\big]^{\nicefrac{\eta s}{s-1}}}{j^\eta}\sum_{i=j}^\infty\P(E^n_i)\\
&=\frac{C^{\nicefrac{1}{\alpha}}\,
\big[\P(E)+\P(\Omega)\big]^{\nicefrac{\eta s}{s-1}+1}}{j^\eta}
=\frac{C^{\nicefrac{1}{\alpha}}\,
\big[\P(E)+\P(\Omega)\big]^{\nicefrac{s}{\alpha(s-1)}}}{j^\eta}.
\end{split}\]
Consequently, we deduce that the claim \eqref{eq:decomposition_thm_aux_2}
is verified. By recalling also \eqref{eq:decomposition_thm_aux_1} and Remark
\ref{rmk:converg_sums}, we can conclude that
\[
\mu\Big(\bigcup\nolimits_{i\in\N}E_i\Big)^\alpha=\sum_{i=0}^\infty\mu(E_i)^\alpha
=\lim_{n\to\infty}\sum_{i=0}^\infty\mu(E^n_i)^\alpha=\mu(E\cap\Omega)^\alpha.
\]
This forces \(\mm\big((E\cap\Omega)\setminus\bigcup_i E_i\big)=0\) and
accordingly \((E_i)_{i\in\N}\in\mathcal P\). Hence,
\begin{equation}\label{eq:decomposition_thm_aux_3}
\sum_{i=0}^\infty\mu(E_i)=\lim_{n\to\infty}\sum_{i=0}^\infty\mu(E^n_i)=M,
\end{equation}
in other words \((E_i)_{i\in\N}\) is a maximiser for the problem in
\eqref{eq:decomposition_thm_aux_0}. Finally, we claim that each set \(E_i\)
is indecomposable in \(\Omega\). Suppose this was not the case: then for some
\(j\in\N\) we would find a partition \(\{F,G\}\) of \(E_j\) into sets of finite
perimeter having positive \(\mm\)-measure and satisfying the identity
\(\P(E_j,\Omega)=\P(F,\Omega)+\P(G,\Omega)\). We can relabel the family
\(\{E_i\}_{i\neq j}\cup\{F,G\}\) as \((F_i)_{i\in\N}\) in such a way
that \(\big(\mm(F_i)\big)_{i\in\N}\) is a non-increasing sequence. Given that
\[
\sum_{i=0}^\infty\P(F_i,\Omega)
=\sum_{i\neq j}\P(E_i,\Omega)+\P(F,\Omega)+\P(G,\Omega)
=\sum_{i=0}^\infty\P(E_i,\Omega)\leq\P(E,\Omega),
\]
we see that \((F_i)_{i\in\N}\in\mathcal P\). On the other hand, given that
\(\alpha>1\) and \(\mu(F),\mu(G)>0\) we have the inequality \(\mu(F)+\mu(G)>\mu(E_j)\),
so that
\[
\sum_{i=0}^\infty\mu(F_i)=\sum_{i\neq j}\mu(E_i)+\mu(F)+\mu(G)
>\sum_{i=0}^\infty\mu(E_i)=M,
\]
This leads to a contradiction with \eqref{eq:decomposition_thm_aux_0},
whence the sets \(E_i\) are proven to be indecomposable in \(\Omega\).
Therefore, the family \(\{E_i\}_{i\in I}\), where
\(I\coloneqq\big\{i\in\N\,:\,\mm(E_i)>0\big\}\), satisfies the required properties.\\
{\color{blue}\textsc{Maximality.}} Let \(F\subset E\cap\Omega\) be a
fixed Borel set with \(\P(F)<+\infty\) that is indecomposable in \(\Omega\).
Choose an index \(j\in I\) for which \(\mm(F\cap E_j)>0\). By Corollary
\ref{cor:aux_uniqueness} we know that
\[
\P(F\cap E_j,\Omega)+\P\Big(F\cap\bigcup\nolimits_{i\neq j}E_i,\Omega\Big)
=\P(F\cap E_j,\Omega)+\sum_{i\neq j}\P(F\cap E_i,\Omega)=\P(F,\Omega).
\]
Given that \(F\) is assumed to be indecomposable in \(\Omega\), we finally conclude
that \(F\cap\bigcup_{i\neq j}E_i\) has null \(\mm\)-measure, so that
\(\mm(F\setminus E_j)=0\). This shows that the elements of \(\{E_i\}_{i\in I}\)
are maximal.\\
{\color{blue}\textsc{Uniqueness.}} Consider any other family \(\{F_j\}_{j\in J}\)
having the same properties as \(\{E_i\}_{i\in I}\). By maximality we know that
for any \(i\in\N\) there exists a (unique) \(j\in\N\) such that \(\mm(E_i\Delta F_j)=0\),
thus the two partitions \(\{E_i\}_{i\in I}\) and \(\{F_j\}_{j\in J}\) are
essentially equivalent (up to \(\mm\)-negligible sets). This proves the
desired uniqueness.
\end{proof}
We are now ready to prove the main result of this section:
\begin{theorem}[Decomposition theorem]\label{thm:decomposition_thm}
Let \((\X,\sfd,\mm)\) be an isotropic PI space. Let \(E\subset\X\) be a set
of finite perimeter. Then there exists a unique (finite or countable) partition
\(\{E_i\}_{i\in I}\) of \(E\) into indecomposable subsets of \(\X\) such
that \(\mm(E_i)>0\) for every \(i\in I\) and \(\P(E)=\sum_{i\in I}\P(E_i)\),
where uniqueness has to be intended in the \(\mm\)-a.e.\ sense. Moreover,
the sets \(\{E_i\}_{i\in I}\) are maximal indecomposable sets, meaning that
for any Borel set \(F\subset E\) with \(\P(F)<+\infty\) that is indecomposable
there is a (unique) \(i\in I\) such that \(\mm(F\setminus E_i)=0\).
\end{theorem}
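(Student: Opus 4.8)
\emph{Strategy.} I would deduce the global statement from the local decomposition already obtained in Proposition~\ref{prop:decomposition_thm_aux} by exhausting \(\X\) with balls of finite perimeter and gluing the resulting local decompositions by an inverse-limit--type construction. The three facts that make the gluing work are: indecomposability is preserved when the ambient Borel set is enlarged (Lemma~\ref{lem:stab_indec}); indecomposability is preserved under increasing unions (Proposition~\ref{prop:stab_union_indec}); and the additivity identity \(\P(E,\Omega)=\sum_n\P(E_n,\Omega)\) for a partition \(\{E_n\}\) of \(E\) is inherited by every finite-perimeter Borel subset of \(E\) (Corollary~\ref{cor:aux_uniqueness}).

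\emph{Construction of the components.} Fix \(\bar x\in\X\) and, using Remark~\ref{rmk:bdry_balls_finite_H}, choose \(r_n\uparrow+\infty\) with \(\Omega_n:=B_{r_n}(\bar x)\) of finite perimeter, so that \(\Omega_n\uparrow\X\). Proposition~\ref{prop:decomposition_thm_aux} gives, for each \(n\), the maximal partition \(\{E^n_i\}_{i\in I_n}\) of \(E\cap\Omega_n\) into sets indecomposable in \(\Omega_n\), with \(\mm(E^n_i)>0\) and \(\P(E,\Omega_n)=\sum_i\P(E^n_i,\Omega_n)\); note \(\bigcup_n I_n\) is at most countable. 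Since \(E^n_i\subset\Omega_n\subset\Omega_{n+1}\), Lemma~\ref{lem:stab_indec} shows \(E^n_i\) is indecomposable in \(\Omega_{n+1}\) (and in \(\X\)), so the maximality at level \(n+1\) produces a unique \(\sigma_n(i)\in I_{n+1}\) with \(\mm\big(E^n_i\setminus E^{n+1}_{\sigma_n(i)}\big)=0\); a set-theoretic argument using disjointness of the level-\((n+1)\) partition then gives \(E^{n+1}_j\cap\Omega_n=\bigcup\{E^n_i:\sigma_n(i)=j\}\) up to \(\mm\)-null sets, and iterating this down to any level \(m\le n\) exhibits \(E^n_j\cap\Omega_m\) as a sub-union of the level-\(m\) partition. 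Declare \((m,i)\) and \((m',i')\) equivalent if the iterated images of \(i\) and \(i'\) under the maps \(\sigma_k\) agree from some level on; for each equivalence class \(c\) pick a representative \((n_0,j_0)\) and set \(F_c:=\bigcup_{n\ge n_0}E^n_{i_n}\), where \(i_{n_0}=j_0\) and \(i_{n+1}=\sigma_n(i_n)\). Up to \(\mm\)-null sets this is an increasing union of sets indecomposable in \(\X\), and \(\{F_c\}_c\) is an at most countable Borel partition of \(E\) (mod \(\mm\)) with \(\mm(F_c)>0\).

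\emph{Finiteness of the perimeter (the main point) and existence.} The delicate step is \(\P(F_c)<+\infty\): the only obvious estimate, \(\P(E^n_{i_n},\Omega_n)\le\P(E,\Omega_n)\le\P(E)\), controls only the perimeter measured inside \(\Omega_n\), and \(\P(\Omega_n)\) may diverge. The remedy is to test against a fixed ball. Fix \(m\); for \(n\ge m\) the set \(E^n_{i_n}\cap\Omega_m\) is, mod \(\mm\), a sub-union of the level-\(m\) partition, so Corollary~\ref{cor:aux_uniqueness} applied to \(\{E^m_i\}_i\) gives \(\P(E^n_{i_n},\Omega_m)=\P(E^n_{i_n}\cap\Omega_m,\Omega_m)\le\P(E,\Omega_m)\le\P(E)\); since \(E^n_{i_n}\to F_c\) in \(L^1_\loc(\mm)\) (being, up to null sets, an increasing sequence exhausting \(F_c\)), lower semicontinuity yields \(\P(F_c,\Omega_m)\le\P(E)\) for every \(m\), hence \(\P(F_c)\le\P(E)<+\infty\). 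Then Proposition~\ref{prop:stab_union_indec} shows \(F_c\) is indecomposable. Running the same computation on the whole level-\(m\) partition and grouping its members by their equivalence class gives \(\P(E,\Omega_m)=\sum_c\P(F_c,\Omega_m)\) for every \(m\); letting \(m\to\infty\) and using monotone convergence of measures yields \(\P(E)=\sum_c\P(F_c)\). This proves existence.

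\emph{Maximality and uniqueness.} Let \(F\subset E\) be Borel with \(\mm(F)>0\), \(\P(F)<+\infty\), and \(F\) indecomposable in \(\X\). Applying Corollary~\ref{cor:aux_uniqueness} on each \(\Omega_m\) to the partition \(\{F\cap E^m_i\}_i\) of \(F\cap\Omega_m\), grouping the summands by class and using sub-additivity of the perimeter (Remark~\ref{rmk:ineq_perim}), one gets \(\sum_c\P(F\cap F_c,\Omega_m)\le\P(F,\Omega_m)\); letting \(m\to\infty\) and combining with the trivial inequality in Remark~\ref{rmk:ineq_perim} gives \(\P(F)=\sum_c\P(F\cap F_c)\). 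Since \(\{F\cap F_c\}_c\) is a partition of \(F\) on which the perimeter is additive and \(F\) is indecomposable, at most one \(F\cap F_c\) can have positive \(\mm\)-measure; as \(\mm(F)>0\), exactly one does, i.e.\ \(\mm(F\setminus F_c)=0\) for a unique \(c\in I\). This is the asserted maximality. Finally, if \(\{F'_j\}_{j\in J}\) is any other partition of \(E\) into indecomposable sets with \(\mm(F'_j)>0\) and \(\P(E)=\sum_j\P(F'_j)\), maximality applied to each \(F'_j\) gives a unique class \(c(j)\) with \(\mm(F'_j\setminus F_{c(j)})=0\), and a short computation shows \(F_c=\bigcup_{c(j)=c}F'_j\) mod \(\mm\), whence \(\P(F_c)\le\sum_{c(j)=c}\P(F'_j)\) by sub-additivity; summing over \(c\) and comparing with \(\sum_j\P(F'_j)=\P(E)=\sum_c\P(F_c)\) forces equality, so \(\{F'_j\}_{c(j)=c}\) is a perimeter-additive partition of the indecomposable set \(F_c\) into positive-measure pieces, hence a singleton. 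Therefore \(c\mapsto\) the unique \(j\) with \(c(j)=c\) is a bijection and \(F'_j=F_{c(j)}\) mod \(\mm\), giving the uniqueness.
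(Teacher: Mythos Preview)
Your proof is correct and follows essentially the same strategy as the paper's main proof: exhaust \(\X\) by balls \(\Omega_n\) of finite perimeter, invoke Proposition~\ref{prop:decomposition_thm_aux} on each \(E\cap\Omega_n\), and glue the local decompositions via the inverse-system structure coming from Lemma~\ref{lem:stab_indec}, then conclude indecomposability of the limit pieces with Proposition~\ref{prop:stab_union_indec}. Your equivalence-class bookkeeping on \(\bigsqcup_n I_n\) is equivalent to the paper's pointwise construction \(G_x:=\bigcup\{E^j_i:x\in E^j_i\}\), and your treatment of the finite-perimeter bound---observing that \(E^n_{i_n}\cap\Omega_m\) is (up to null sets) a sub-union of the level-\(m\) partition and bounding \(\P(E^n_{i_n},\Omega_m)\le\P(E,\Omega_m)\) before passing to the limit---is in fact slightly more explicit than the corresponding line in the paper. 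One simplification: for maximality you can apply Corollary~\ref{cor:aux_uniqueness} once with \(\Omega=\X\) to the global partition \(\{F_c\}_c\) (since you have already established \(\P(E)=\sum_c\P(F_c)\)), obtaining \(\P(F)=\sum_c\P(F\cap F_c)\) directly, rather than passing through each \(\Omega_m\).
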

\begin{proof}
Let \(\bar x\in\X\) be a fixed point. Choose a sequence of radii \(r_j\nearrow +\infty\)
such that \(\Omega_j\coloneqq B_{r_j}(\bar x)\) has finite perimeter for all \(j\in\N\).
Let us apply Proposition \ref{prop:decomposition_thm_aux}: given any \(j\in\N\),
there exists an \(\mm\)-essentially unique partition \(\{E^j_i\}_{i\in I_j}\)
of \(E\cap\Omega_j\), into sets of finite perimeter that are maximal indecomposable
subsets of \(\Omega_j\), with \(\mm(E^j_i)>0\) for all \(i\in I_j\) and
\(\P(E,\Omega_j)=\sum_{i\in I_j}\P(E^j_i,\Omega_j)\).

Given any \(j\in\N\) and \(i\in I_j\), we know from Lemma \ref{lem:stab_indec}
that \(E^j_i\) is indecomposable in \(\Omega_{j+1}\), thus there exists
\(\ell\in I_{j+1}\) for which \(\mm(E^j_i\setminus E^{j+1}_\ell)=0\).
This ensures that -- possibly choosing different \(\mm\)-a.e.\ representatives
of the sets \(E^j_i\)'s under consideration -- we can assume that:
\begin{equation}\label{eq:decomposition_thm_1}
\text{For every }j\in\N\text{ and }i\in I_j\text{ there exists (a unique) }
\ell\in I_{j+1}\text{ such that }E^j_i\subset E^{j+1}_\ell.
\end{equation}
Given any \(x\in E\), let us define the set \(G_x\subset E\)
\[
G_x\coloneqq\bigcup\big\{E^j_i\;\big|\;j\in\N,\;i\in I_j,\;x\in E^j_i\big\}.
\]
One clearly has that \(\mm(G_x)>0\). Moreover, it readily follows
from \eqref{eq:decomposition_thm_1} that
\begin{equation}\label{eq:decomposition_thm_2}
G_x=\bigcup\big\{E^j_i\;\big|\;j\in\N,\;j\geq j_0,\;i\in I_j,\;x\in E^j_i\big\}
\quad\text{ for every }j_0\in\N.
\end{equation}
We claim that:
\begin{equation}\label{eq:decomposition_thm_3}
\text{For every }x,y\in E\text{ it holds that either }
G_x\cap G_y=\emptyset\text{ or }G_x=G_y.
\end{equation}
In order to prove it, assume that \(G_x\cap G_y\neq\emptyset\)
and pick any \(z\in G_x\cap G_y\). Then there exist some indices \(j_x,j_y\in\N\),
\(i_x\in I_{j_x}\) and \(i_y\in I_{j_y}\) such that \(\{x,z\}\subset E^{j_x}_{i_x}\)
and \(\{y,z\}\subset E^{j_y}_{i_y}\). Possibly interchanging \(x\) and \(y\),
we can suppose that \(j_y\leq j_x\). Given that \(E^{j_x}_{i_x}\cap E^{j_y}_{i_y}\)
is not empty (as it contains \(z\)), we infer from \eqref{eq:decomposition_thm_1}
that \(E^{j_y}_{i_y}\subset E^{j_x}_{i_x}\). Consequently, property
\eqref{eq:decomposition_thm_2} ensures that the sets \(G_x\) and \(G_y\)
coincide, thus proving the claim \eqref{eq:decomposition_thm_3}.

Let us define \(\mathcal F\coloneqq\{G_x\,:\,x\in E\}\). It turns out that
the family \(\mathcal F\) is at most countable: the map sending each element
\((j,i)\) of \(\bigsqcup_{j\in\N}I_j\) to the unique element of \(\mathcal F\)
containing \(E^j_i\) is clearly surjective. Then rename \(\mathcal F\)
as \(\{E_i\}_{i\in I}\). Observe that \(\{E_i\}_{i\in I}\) constitutes
a Borel partition of \(E\). Now fix \(i\in I\). We can choose \(j(i)\in\N\)
and \(\ell(i,j)\in I_j\) for all \(j\geq j(i)\) such that
\(E_i=\bigcup_{j\geq j(i)}E^j_{\ell(i,j)}\). Let us also call
\[
F^j_i\coloneqq\left\{\begin{array}{ll}
\emptyset\\
E^j_{\ell(i,j)}
\end{array}\quad\begin{array}{ll}
\text{ if }j<j(i),\\
\text{ if }j\geq j(i).
\end{array}\right.
\]
Therefore, \(E_i=\bigcup_{j\in\N}F^j_i\). Given any \(j\in\N\), we have
\(\P(E_i\cap\Omega_j,\Omega_j)=\P(E_i,\Omega_j)\) as \(\Omega_j\) is open.
Then \(\P(E_i,\Omega_j)=\P(E_i\cap\Omega_j,\Omega_j)=\P(F^j_i,\Omega_j)
\leq\P(E,\Omega_j)\leq\P(E)\) holds for every \(j\geq j(i)\), so that
\[
\P(E_i)=\lim_{j\to\infty}\P(E_i,\Omega_j)\leq\P(E).
\]
This shows that the sets \(\{E_i\}_{i\in I}\) have finite perimeter, while
the fact that they are indecomposable follows from Proposition
\ref{prop:stab_union_indec}. Now fix any finite subset \(J\) of \(I\).
Similarly to the estimates above, we see that for every
\(j\geq\max\big\{j(i)\,:\,i\in J\big\}\) it holds that
\[
\sum_{i\in J}\P(E_i,\Omega_j)=\sum_{i\in J}\P(E_i\cap\Omega_j,\Omega_j)
=\sum_{i\in J}\P(F^j_i,\Omega_j)\leq\sum_{\ell\in I_j}\P(E^j_\ell,\Omega_j)
\leq\P(E,\Omega_j)\leq\P(E),
\]
whence \(\sum_{i\in J}\P(E_i)=\lim_j\sum_{i\in J}\P(E_i,\Omega_j)\leq\P(E)\).
By arbitrariness of \(J\subset I\) this yields the inequality
\(\sum_{i\in I}\P(E_i)\leq\P(E)\), thus accordingly \(\P(E)=\sum_{i\in I}\P(E_i)\)
by Remark \ref{rmk:ineq_perim}. Finally, maximality and uniqueness can be proven
by arguing exactly as in Proposition \ref{prop:decomposition_thm_aux}. Therefore,
the statement is achieved.
\end{proof}
\begin{definition}[Essential connected components]
Let \((\X,\sfd,\mm)\) be an isotropic PI space. Let us fix a set \(E\subset\X\)
of finite perimeter. Then we denote by
\[
\CC(E)\coloneqq\{E_i\}_{i\in I}
\]
the decomposition of \(E\) provided by Theorem \ref{thm:decomposition_thm}.
(We assume the index set is either \(I=\N\) or \(I=\{0,\ldots,n\}\)
for some \(n\in\N\).)
The sets \(E_i\) are called the \emph{essential connected components} of \(E\).
\end{definition}
\begin{example}\label{ex:doubling_fails_decomp}{\rm
Although we do not know if the Decomposition Theorem \ref{thm:decomposition_thm} holds without the assumption
on isotropicity, one can see that the assumption on \((1,1)\)-Poincar\'e inequality
cannot be relaxed to a \((1,p)\)-Poincar\'e inequality with \(p>1\). As an example of
this, one can take a fat Sierpi\'nski carpet \(S_\mathbf{a}\subset[0,1]^2\) with a
sequence \(\mathbf{a}\in\ell^2\setminus\ell^1\), as defined in \cite{MTW13}.
The set \(S_\mathbf{a}\), equipped with a natural measure \(\mm\) and distance \(\sfd\),
is a \(2\)-Ahlfors-regular metric measure space supporting a \((1,p)\)-Poincar\'e
inequality for all exponents \(p>1\). Nevertheless, given any vertical strip of the form
\(I_{x,\varepsilon}\coloneqq(x-\eps,x+\eps)\times [0,1]\), where
\(x=\sum_{i=1}^n x_i\,3^{-i}+2^{-1}\,3^{-n}\) with \(x_i\in\{0,1,2\}\) and \(n\in\N\),
we have \(\mm(I_{x,\eps})/\eps\to 0\) as \(\eps\to 0\). Thus, any set of finite perimeter
\(E\subset S_\mathbf{a}\) can be decomposed into the union of
\(E\cap\big([0,x]\times[0,1]\big)\) and \(E\cap\big([x,1]\times[0,1]\big)\).
Since the family of coordinates \(x\) for which this holds is dense in \([0,1]\),
no set of positive measure in \(S_\mathbf{a}\) can be decomposed into countably
many indecomposable sets.
\fr}\end{example}
\begin{remark}\label{rmk:bdry_holes}{\rm
Given an isotropic PI space and a set \(E\subset\X\) of finite perimeter, it holds that
\begin{equation}\label{eq:bdry_holes}
\H(\partial^e F\setminus\partial^e E)=0\quad\text{ for every }F\in\CC(E).
\end{equation}
This property is an immediate consequence of Lemma \ref{lem:incl_front_comp}.
\fr}\end{remark}
\begin{proposition}[Stability of indecomposable sets, II]\label{prop:stab_union_indec_II}
Let \((\X,\sfd,\mm)\) be an isotropic PI space. Fix two indecomposable sets
\(E,F\subset\X\). Suppose that either \(\mm(E\cap F)>0\) or
\(\H(\partial^e E\cap\partial^e F)>0\). Then \(E\cup F\) is an indecomposable set.
\end{proposition}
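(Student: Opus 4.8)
The plan is to argue by contradiction. Since $E$ and $F$ have finite perimeter, subadditivity (item iii) of Proposition \ref{prop:properties_finite_perimeter_sets}) gives $\P(E\cup F)<+\infty$, so it is meaningful to ask whether $E\cup F$ is indecomposable. Assume it is not: fix a Borel partition $\{A_1,A_2\}$ of $E\cup F$ into sets of finite perimeter with $\mm(A_1),\mm(A_2)>0$ and $\P(E\cup F)=\P(A_1)+\P(A_2)$.

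First I would transfer the decomposition onto $E$ and onto $F$. Applying Lemma \ref{lem:aux_uniqueness_finite_case} with the set $E\cup F$, the partition $\{A_1,A_2\}$ and the Borel set $\X$, and testing it against the finite-perimeter subsets $E$ and $F$ of $E\cup F$, one obtains $\P(E)=\P(E\cap A_1)+\P(E\cap A_2)$ and $\P(F)=\P(F\cap A_1)+\P(F\cap A_2)$. Since $\{E\cap A_1,E\cap A_2\}$ is a Borel partition of the indecomposable set $E$, one of its pieces is $\mm$-null; after relabelling we may assume $\mm(E\cap A_2)=0$, i.e. $E\subset A_1$ up to $\mm$-null sets. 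Likewise, indecomposability of $F$ forces $F\subset A_1$ or $F\subset A_2$ up to $\mm$-null sets. The first alternative is impossible: it would yield $A_2\subset E\cup F\subset A_1$ up to $\mm$-null sets, contradicting $\mm(A_1\cap A_2)=0$ and $\mm(A_2)>0$. Hence $F\subset A_2$ up to $\mm$-null sets.

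Next I would upgrade these inclusions to equalities. From $A_1\subset E\cup F$, $F\subset A_2$ and $\mm(A_1\cap A_2)=0$ one gets $\mm(A_1\cap F)=0$, and then, using $E\subset A_1$, one checks $\mm(A_1\Delta E)=0$; symmetrically $\mm(A_2\Delta F)=0$. Two consequences follow. First, $\mm(E\cap F)\leq\mm(A_1\cap A_2)+\mm(A_1\Delta E)+\mm(A_2\Delta F)=0$. Second, by the locality of perimeter (item i) of Proposition \ref{prop:properties_finite_perimeter_sets}) we have $\P(A_1)=\P(E)$ and $\P(A_2)=\P(F)$, so $\P(E\cup F)=\P(E)+\P(F)$; item i) of Lemma \ref{lem:equiv_P_additive} (with Borel set $\X$) then gives $\H(\partial^e E\cap\partial^e F)=0$. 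Thus the decomposability of $E\cup F$ would force simultaneously $\mm(E\cap F)=0$ and $\H(\partial^e E\cap\partial^e F)=0$, contradicting the hypothesis; therefore $E\cup F$ is indecomposable.

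The only slightly delicate point is the null-set bookkeeping in the implication $E\subset A_1,\ F\subset A_2\ \Rightarrow\ \mm(A_1\Delta E)=\mm(A_2\Delta F)=0$, which is handled by writing $A_1\setminus E\subset(A_1\cap F)\cup\big(A_1\setminus(E\cup F)\big)$ and noting both sets on the right are $\mm$-negligible. Everything else is a direct combination of Lemmas \ref{lem:aux_uniqueness_finite_case} and \ref{lem:equiv_P_additive} with the indecomposability of $E$ and of $F$.
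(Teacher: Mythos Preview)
Your proof is correct. The approach, however, differs from the paper's. The paper invokes the Decomposition Theorem \ref{thm:decomposition_thm}: it writes \(\CC(E\cup F)=\{G_i\}_{i\in I}\), uses maximality to place \(E\subset G_i\) and \(F\subset G_j\) for some \(i,j\), and then observes that if \(i\neq j\) one must have \(\CC(E\cup F)=\{E,F\}\), whence \(\P(E\cup F)=\P(E)+\P(F)\) and so \(\H(\partial^e E\cap\partial^e F)=0\) by Lemma \ref{lem:equiv_P_additive}~i); together with \(\mm(E\cap F)\leq\mm(G_i\cap G_j)=0\) this contradicts the hypothesis. Your argument reaches the same pair of conclusions (\(\mm(E\cap F)=0\) and \(\H(\partial^e E\cap\partial^e F)=0\)) but bypasses the Decomposition Theorem entirely: you work directly with a hypothetical two-piece splitting \(\{A_1,A_2\}\) and use only Lemma \ref{lem:aux_uniqueness_finite_case} (to push the splitting down to \(E\) and to \(F\)) plus indecomposability of \(E\) and \(F\). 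This is more elementary and self-contained, and in particular shows that the proposition does not logically depend on Theorem \ref{thm:decomposition_thm}; the paper's route is shorter once the decomposition machinery is available.
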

\begin{proof}
Denote \(\CC(E\cup F)=\{G_i\}_{i\in I}\). Choose \(i,j\in I\) such that
\(\mm(E\setminus G_i)=\mm(F\setminus G_j)=0\), whose existence is granted by the
maximality of the connected components of \(E\cup F\). If \(\mm(E\cap F)>0\) then
\(i=j\), whence \(\CC(E\cup F)=\{E\cup F\}\) and accordingly \(E\cup F\) is
indecomposable. Otherwise, we have \(i\neq j\) and \(\CC(E\cup F)=\{E,F\}\),
so that \(H(\partial^e E\cap\partial^e F)=0\) by item i) of Lemma
\ref{lem:equiv_P_additive}.
\end{proof}
\begin{remark}\label{rmk:difference_with_Rn}{\rm
Let us highlight the two main technical differences between the proofs
we carried out in this section and the corresponding ones for \(\R^n\)
that were originally presented in \cite{ACMM01}:
\begin{itemize}
\item[\(\rm i)\)] There exist isotropic PI spaces \(\X\) where it is possible
to find a set of finite perimeter \(E\) whose associated perimeter measure
\(\P(E,\cdot)\) is not concentrated on \(E^{\nicefrac{1}{2}}\). For instance,
consider the space described in Example \ref{ex:extra_hp_fails}: it is an isotropic
PI space where \eqref{eq:extra_hp_PI} fails, thus in particular property
\eqref{eq:density_1_2} is not verified
(as a consequence of Lemma \ref{lem:suff_cond_isotr}).

Some of the results of \cite{ACMM01} -- which have a counterpart
in this paper -- are proven by using property \eqref{eq:density_1_2}.
Consequently, the approaches we followed to prove some of the results of
this section provide new proofs even in the Euclidean setting.
\item[\(\rm ii)\)] An essential ingredient in the proof of the decomposition
theorem \cite[Theorem 1]{ACMM01} is the (global) isoperimetric inequality.
In our case, we only have the relative isoperimetric inequality at disposal,
thus we need to `localise' the problem: first we prove a local version of the
decomposition theorem (namely, Proposition \ref{prop:decomposition_thm_aux}),
then we obtain the full decomposition by means of a `patching argument'
(as described in the proof of Theorem \ref{thm:decomposition_thm}).
Let us point out that in the Ahlfors-regular case the proof of the
decomposition theorem would closely follows along the lines of \cite[Theorem 1]{ACMM01}  
(thanks to Theorem \ref{thm:global_isoper_Ahlfors}).
\end{itemize}
Finally, an alternative proof of the decomposition theorem will be provided in
Section \ref{s:Lyapunov}.
\fr}\end{remark}
\section{Extreme points in the space of BV functions}\label{s:extreme_BV}
The aim of Subsection \ref{ss:simple_and_extreme} is to study the extreme
points of the `unit ball' in the space of BV functions over an isotropic
PI space (with a uniform bound on the support). More precisely, given an
isotropic PI space \((\X,\sfd,\mm)\) and a compact set \(K\subset\X\), we will detect
the extreme points of the convex set made of all functions \(f\in\BV(\X)\)
such that \({\rm spt}(f)\subset K\) and \(|Df|(\X)\leq 1\), with respect
to the strong topology of \(L^1(\mm)\); cf.\ Theorem \ref{thm:extreme_points_BV}.
Informally speaking, the extreme points coincide -- at least under some further
assumptions -- with the (suitably normalised) characteristic functions of
\emph{simple sets}, whose definition is given in Definition \ref{def:simple_sets}.
In Subsection \ref{ss:holes_and_saturation} we provide an alternative
characterisation of simple sets (cf.\ Theorem \ref{thm:equiv_simple_Ahlfors})
in the framework of Alhfors-regular spaces, a key role being played by the
concept of \emph{saturation} of a set, whose definition relies upon the
decomposition properties treated in Section \ref{s:decomposition}.
\subsection{Simple sets and extreme points in BV}\label{ss:simple_and_extreme}
A set of finite perimeter \(E\subset\R^n\) having finite Lebesgue measure is a
simple set provided one of the following (equivalent) properties is satisfied:
\begin{itemize}
\item[\(\rm i)\)] \(E\) is indecomposable and saturated, the latter term
meaning that the complement of \(E\) does not have essential connected
components of finite Lebesgue measure.
\item[\(\rm ii)\)] Both \(E\) and \(\R^n\setminus E\) are indecomposable.
\item[\(\rm iii)\)] If \(F\subset\R^n\) is a set of finite perimeter
such that \(\partial^e F\) is essentially contained in \(\partial^e E\)
(with respect to the \((n-1)\)-dimensional Hausdorff measure),
then \(F=E\) (up to \(\mathcal L^n\)-null sets).
\end{itemize}
We refer to \cite[Section 5]{ACMM01} for a discussion about the equivalence
of the above conditions. In the more general setting of isotropic PI spaces,
(the appropriate reformulations of) these three notions are no longer equivalent.
The one that well captures the property we are interested in (i.e., the
fact of providing an alternative characterisation of the extreme points in BV)
is item iii), which accordingly is the one that we choose as the definition of
simple set in our context:
\begin{definition}[Simple sets]\label{def:simple_sets}
Let \((\X,\sfd,\mm)\) be a PI space. Let \(E\subset\X\) be a set of finite
perimeter with \(\mm(E)<+\infty\). Then we say that \(E\) is a \emph{simple set}
provided for every set \(F\subset\X\) of finite perimeter with
\(\H(\partial^e F\setminus\partial^e E)=0\) it holds
\(\mm(F)=0\), \(\mm(F^c)=0\), \(\mm(F\Delta E)=0\), or \(\mm(F\Delta E^c)=0\).
\end{definition}
It is rather easy to prove that -- under some additional assumptions --
the definition of simple set we have just proposed is equivalent to
(the suitable rephrasing of) item ii) above:
\begin{proposition}[Indecomposability of simple sets]\label{prop:properties_simple}
Let \((\X,\sfd,\mm)\) be an isotropic PI space. Let us consider a set
\(E\subset\X\) of finite perimeter such that \(\mm(E)<+\infty\). Then:
\begin{itemize}
\item[\(\rm i)\)] If \(E\) is a simple set, then \(E\) and \(E^c\) are indecomposable.
\item[\(\rm ii)\)] Suppose \((\X,\sfd,\mm)\) satisfies \eqref{eq:extra_hp_PI}.
If \(E\) and \(E^c\) are indecomposable, then \(E\) is a simple set.
\end{itemize}
\end{proposition}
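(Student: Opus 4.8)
The plan is to prove the two implications separately. For item i) we argue by contradiction. Suppose $E$ is simple but $E$ is decomposable, so that there are disjoint sets of finite perimeter $F,G$ with $\mm(F),\mm(G)>0$, $E=F\cup G$ and $\P(E)=\P(F)+\P(G)$. Then $F\subset E$ and $\P(E)=\P(F)+\P(E\setminus F)$, so Lemma \ref{lem:incl_front_comp} yields $\H(\partial^e F\setminus\partial^e E)=0$. By Definition \ref{def:simple_sets} one of $\mm(F)=0$, $\mm(F^c)=0$, $\mm(F\Delta E)=0$, $\mm(F\Delta E^c)=0$ must hold; but $G\subset F^c$, $G=E\setminus F\subset F\Delta E$ and $F=F\cap E\subset F\Delta E^c$, so each of these four alternatives forces $\mm(F)=0$ or $\mm(G)=0$, a contradiction. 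Hence $E$ is indecomposable. The same argument applied to $E^c$ (which has finite perimeter by complementation, with $\partial^e E^c=\partial^e E$ by item i) of Proposition \ref{prop:properties_ess_bdry}) shows that $E^c$ is indecomposable.

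For item ii) we use that $(\X,\sfd,\mm)$ is isotropic and satisfies \eqref{eq:extra_hp_PI}, so by item ii) of Theorem \ref{thm:equiv_indecomp} both indecomposable sets $E$ and $E^c$ enjoy the implication \eqref{eq:equiv_indecomp}. Fix any set $F$ of finite perimeter with $\H(\partial^e F\setminus\partial^e E)=0$; we must verify one of the four conclusions of Definition \ref{def:simple_sets}. Apply \eqref{eq:equiv_indecomp} to $f\coloneqq\1_F$: one has $|Df|(\X)=\P(F)<+\infty$, and since $\partial^e E\cap E^1=\emptyset$ we get $\partial^e F\cap E^1\subset\partial^e F\setminus\partial^e E$, whence $|Df|(E^1)=(\theta_F\H)(\partial^e F\cap E^1)=0$ by Theorem \ref{thm:repr_per}. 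Therefore $\1_F$ is $\mm$-a.e.\ constant on $E$, i.e.\ either $\mm(E\cap F)=0$ or $\mm(E\setminus F)=0$. Running the same computation with $E^c$ in place of $E$ (using $\partial^e E^c=\partial^e E$ and $\partial^e E^c\cap(E^c)^1=\emptyset$) gives either $\mm(E^c\cap F)=0$ or $\mm(E^c\setminus F)=0$.

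Combining the two dichotomies leaves four cases, and in each of them the two vanishing conditions add up to one of the conclusions of Definition \ref{def:simple_sets}: $\mm(E\cap F)=\mm(E^c\cap F)=0$ gives $\mm(F)=0$; $\mm(E\cap F)=\mm(E^c\cap F^c)=0$ gives $\mm(F\Delta E^c)=0$ (since $F\Delta E^c=(F\cap E)\cup(E^c\cap F^c)$); $\mm(E\cap F^c)=\mm(E^c\cap F)=0$ gives $\mm(F\Delta E)=0$; and $\mm(E\cap F^c)=\mm(E^c\cap F^c)=0$ gives $\mm(F^c)=0$. Hence $E$ is simple. The only slightly delicate point is the bookkeeping with essential boundaries, namely the inclusion $\partial^e F\cap E^1\subset\partial^e F\setminus\partial^e E$ and its analogue for $E^c$, but these follow at once from $\partial^e E\cap E^1=\emptyset$ (and its analogue for $E^c$) together with item i) of Proposition \ref{prop:properties_ess_bdry}; everything else is elementary set theory.
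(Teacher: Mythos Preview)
Your proof is correct. For item i) you follow essentially the same contradiction argument as the paper; the only difference is cosmetic---you invoke Lemma~\ref{lem:incl_front_comp} directly to obtain \(\H(\partial^e F\setminus\partial^e E)=0\), whereas the paper reaches the same conclusion by combining item iv) of Proposition~\ref{prop:properties_ess_bdry} with item i) of Lemma~\ref{lem:equiv_P_additive}. Your route is marginally cleaner and in fact does not use isotropicity at this step.

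For item ii) you take a genuinely different path. The paper argues directly at the level of essential boundaries: using \eqref{eq:extra_hp_PI} it shows that \(\H\big(\partial^e(E\cap F)\cap\partial^e(E\setminus F)\big)=0\), then appeals to item ii) of Lemma~\ref{lem:equiv_P_additive} to obtain \(\P(E)=\P(E\cap F)+\P(E\setminus F)\), and finally uses indecomposability of \(E\) (and likewise of \(E^c\)). You instead invoke item ii) of Theorem~\ref{thm:equiv_indecomp}: since \(E\) is indecomposable and the space is isotropic with \eqref{eq:extra_hp_PI}, the set \(E\) enjoys property \eqref{eq:equiv_indecomp}, so the function \(\1_F\)---which you check satisfies \(|D\1_F|(E^1)=0\) via the inclusion \(\partial^e F\cap E^1\subset\partial^e F\setminus\partial^e E\)---must be \(\mm\)-a.e.\ constant on \(E\). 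This is a legitimate and rather elegant shortcut: it packages the boundary computations and the coarea argument into the black box of Theorem~\ref{thm:equiv_indecomp}. The trade-off is that the paper's argument is more self-contained and makes transparent exactly where \eqref{eq:extra_hp_PI} enters, while your argument is shorter but leans on a heavier earlier result.
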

\begin{proof}
{\color{blue}i)} Assume \(E\subset\X\) is a simple set. First, we prove by
contradiction that \(E\) is indecomposable: suppose it is not, thus it can
be written as \(E=F\cup G\) for some pairwise disjoint sets \(F,G\) of
finite perimeter such that \(\mm(F),\mm(G)>0\) and \(\P(E)=\P(F)+\P(G)\).
By combining item iv) of Proposition \ref{prop:properties_ess_bdry} with
item ii) of Lemma \ref{lem:equiv_P_additive}, we obtain that
\[
\H\big((\partial^e F\cup\partial^e G)\setminus\partial^e E\big)
\leq\H\big(\partial^e F\cap\partial^e G\big)=0.
\]
In particular, we have that
\(\H(\partial^e F\setminus\partial^e E)=\H(\partial^e G\setminus\partial^e E)=0\).
Being \(E\) simple, we get \(F=G=E\), which leads to
a contradiction. Then \(E\) is indecomposable. In order to show that
also \(E^c\) is indecomposable, we argue in a similar way: suppose
\(E^c=F'\cup G'\) for pairwise disjoint sets \(F',G'\) of
finite perimeter with \(\mm(F'),\mm(G')>0\) and \(\P(E^c)=\P(F')+\P(G')\).
By arguing as before we obtain that
\(\H(\partial^e E^c\setminus\partial^e F')=\H(\partial^e E^c\setminus\partial^e F')=0\).
Being \(\partial^e E^c=\partial^e E\), we can conclude (again since \(E\) is simple)
that \(F'=G'=E^c\), whence the contradiction. Therefore, \(E^c\) is indecomposable.\\
{\color{blue}ii)} Assume that \((\X,\sfd,\mm)\) satisfies \eqref{eq:extra_hp_PI}
and that \(E,E^c\) are indecomposable sets. Take a set \(F\subset\X\) of finite
perimeter such that \(\H(\partial^e F\setminus\partial^e E)=0\). We know from
\eqref{eq:extra_hp_PI} that
\[
\H\big(\partial^e(E\cap F)\cap\partial^e(E\setminus F)\cap\partial^e E\big)=0=
\H\big(\partial^e(E^c\cap F)\cap\partial^e(E^c\setminus F)\cap\partial^e E^c\big).
\]
Consequently, we deduce that
\[\begin{split}
\H\big(\partial^e(E\cap F)\cap\partial^e(E\setminus F)\big)&\leq
\H(\partial^e(E\cap F)\setminus\partial^e E)=\H(\partial^e F\setminus\partial^e E)=0,\\
\H\big(\partial^e(E^c\cap F)\cap\partial^e(E^c\setminus F)\big)&\leq
\H(\partial^e(E^c\cap F)\setminus\partial^e E^c)=\H(\partial^e F\setminus\partial^e E)=0.
\end{split}\]
Then item ii) of Lemma \ref{lem:equiv_P_additive} yields
\(\P(E)=\P(E\cap F)+\P(E\setminus F)\) and \(\P(E^c)=\P(E^c\cap F)+\P(E^c\setminus F)\).
Being \(E\) (resp.\ \(E^c\)) indecomposable, we conclude that
either \(\mm(E\cap F)=0\) or \(\mm(E\setminus F)=0\) (resp.\ either
\(\mm(E^c\cap F)=0\) or \(\mm(E^c\setminus F)=0\)). This implies that
\(\mm(F)=0\), \(\mm(F^c)=0\), \(\mm(F\Delta E)=0\) or \(\mm(F\Delta E^c)=0\),
thus proving that \(E\) is a simple set.
\end{proof}
\begin{remark}{\rm
Item ii) of Proposition \ref{prop:properties_simple} fails in the space in Example
\ref{ex:extra_hp_fails}, where the assumption \eqref{eq:extra_hp_PI} is not satisfied:
both \(E_1\cup E_2\) and \((E_1\cup E_2)^c=E_3\) are indecomposable, but
the set \(E_1\cup E_2\) is not simple.
\fr}\end{remark}
Let \((\X,\sfd,\mm)\) be a metric measure space. Let \(K\subset\X\) be a compact set.
Then we define
\[
\mathcal K(\X;K)\coloneqq
\big\{f\in\BV(\X)\;\big|\;{\rm spt}(f)\subset K,\;|Df|(\X)\leq 1\big\}.
\]
\begin{remark}{\rm
It holds that
\[
\mathcal K(\X;K)\;\text{ is a convex, compact subset of }L^1(\mm).
\]
First of all, its convexity is granted by item ii) of Proposition
\ref{prop:properties_BV_functions}. To prove compactness, fix any
sequence \((f_n)_n\subset\mathcal K(\X;K)\). Item iii) of Proposition
\ref{prop:properties_BV_functions} says that \(f_{n_i}\to f\) in
\(L^1_\loc(\X)\) for some subsequence \((n_i)_i\) and some limit
function \(f\in L^1_\loc(\mm)\). Given that \({\rm spt}(f_n)\subset K\) for
every \(n\in\N\), we know that \({\rm spt}(f)\subset K\),
thus \(f\in L^1(\mm)\) and \(f_{n_i}\to f\) in \(L^1(\mm)\).
Finally, by using item i) of Proposition \ref{prop:properties_BV_functions}
we conclude that \(|Df|(\X)\leq\limi_i|Df_{n_i}|(\X)\leq 1\),
whence \(f\in\mathcal K(\X;K)\).
\fr}\end{remark}
Recall that \({\rm ext}\,\mathcal K(\X;K)\) stands for the set of all extreme points
of \(\mathcal K(\X;K)\); cf.\ Appendix \ref{app:extreme_points}.
Furthermore, observe that \(|Df|(\X)=1\) holds for
every \(f\in{\rm ext}\,\mathcal K(\X;K)\). In the remaining part of this
subsection, we shall study in detail the family \({\rm ext}\,\mathcal K(\X;K)\).
Our arguments are strongly inspired by the ideas of the papers \cite{Fleming57,Fleming60}.
\bigskip

Let \((\X,\sfd,\mm)\) be a PI space.
Given a set \(E\subset\X\) of finite perimeter with \(0<\mm(E)<+\infty\), let
\[
\Phi_\pm(E)\coloneqq\pm\frac{\1_E}{\P(E)}\in\BV(\X).
\]
Observe that \(\big|D\Phi_+(E)\big|(\X)=\big|D\Phi_-(E)\big|(\X)=1\).
For any compact set \(K\subset\X\) we define
\[\begin{split}
\mathcal F(\X;K)&\coloneqq\big\{\Phi_\pm(E)\;\big|\;E\subset K
\text{ is a set of finite perimeter with }\mm(E)>0\big\},\\
\mathcal I(\X;K)&\coloneqq\big\{\Phi_\pm(E)\;\big|\;E\subset K
\text{ is an indecomposable set with }\mm(E)>0\big\},\\
\mathcal S(\X;K)&\coloneqq\big\{\Phi_\pm(E)\;\big|\;E\subset K
\text{ is a simple set with }\mm(E)>0\big\}.
\end{split}\]
Observe that \(\mathcal S(\X;K),\mathcal I(\X;K)\subset\mathcal F(\X;K)\subset\mathcal K(\X;K)\).
Given any function \(f\in\mathcal F(\X;K)\), we shall denote
by \(E_f\subset\X\) the (\(\mm\)-a.e.\ unique) Borel set satisfying either
\(f=\Phi_+(E_f)\) or \(f=\Phi_-(E_f)\).
If, in addition, the space \((\X,\sfd,\mm)\) is isotropic, then
\(\mathcal S(\X;K)\subset\mathcal I(\X;K)\) by item i) of Proposition
\ref{prop:properties_simple}.
\begin{proposition}\label{prop:convex_hull_simple}
Let \((\X,\sfd,\mm)\) be a PI space and \(K\subset\X\) a compact set.
Then the closed convex hull of the set \(\mathcal F(\X;K)\) coincides
with \(\mathcal K(\X;K)\).
\end{proposition}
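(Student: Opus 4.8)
The plan is to prove the two inclusions between $\overline{\mathrm{conv}}\,\mathcal{F}(\X;K)$ and $\mathcal{K}(\X;K)$ separately. One of them is immediate: we have already observed that $\mathcal{F}(\X;K)\subseteq\mathcal{K}(\X;K)$, and $\mathcal{K}(\X;K)$ is a convex, closed (in fact compact) subset of $L^1(\mm)$, so it contains the closed convex hull of each of its subsets; hence $\overline{\mathrm{conv}}\,\mathcal{F}(\X;K)\subseteq\mathcal{K}(\X;K)$. The real content is the reverse inclusion, and I would derive it by combining the density of simple $\BV$ functions (Lemma \ref{lem:dens_simple}) with the sharp normalisation recorded in Remark \ref{rmk:stronger_dens_simple}.

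Concretely, fix $f\in\mathcal{K}(\X;K)$. By Lemma \ref{lem:dens_simple} and Remark \ref{rmk:stronger_dens_simple} there is a sequence of simple functions $f_n=\sum_{i=1}^{k_n}\lambda^n_i\,\1_{E^n_i}$ with \({\rm spt}(f_n)\subseteq K\), with \(f_n\to f\) in \(L^1(\mm)\), and with
\[
\sum_{i=1}^{k_n}|\lambda^n_i|\,\P(E^n_i)\le|Df|(\X)\le 1\qquad\text{for every }n\in\N.
\]
Inspecting the construction in the proof of Lemma \ref{lem:dens_simple}, the sets $E^n_i$ carrying a nonzero coefficient are level-difference sets of $f$, hence $\mm$-essentially contained in ${\rm spt}(f)\subseteq K$; after discarding the trivial terms and replacing each remaining $E^n_i$ by $E^n_i\cap K$ (which, by locality of the perimeter, changes neither $\1_{E^n_i}$ in $L^1(\mm)$ nor $\P(E^n_i)$) I may assume $E^n_i\subseteq K$, $0<\mm(E^n_i)<+\infty$ and $\P(E^n_i)>0$ for all $i,n$. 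Setting $c^n_i\coloneqq|\lambda^n_i|\,\P(E^n_i)\ge0$ and letting $\sigma^n_i\in\{+,-\}$ be the sign of $\lambda^n_i$, one rewrites
\[
f_n=\sum_{i=1}^{k_n}c^n_i\,\Phi_{\sigma^n_i}(E^n_i),\qquad \Phi_{\sigma^n_i}(E^n_i)\in\mathcal{F}(\X;K),\qquad \sum_{i=1}^{k_n}c^n_i\le 1.
\]

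The last display is a priori only a sub-convex combination of elements of $\mathcal{F}(\X;K)$; to upgrade it to a genuine convex combination I would exploit that $\mathcal{F}(\X;K)$ is symmetric (indeed $\Phi_-(E)=-\Phi_+(E)$), so that $0=\tfrac12\Phi_+(E)+\tfrac12\Phi_-(E)$ lies in $\mathrm{conv}\,\mathcal{F}(\X;K)$. Then
\[
f_n=\sum_{i=1}^{k_n}c^n_i\,\Phi_{\sigma^n_i}(E^n_i)+\Bigl(1-\sum_{i=1}^{k_n}c^n_i\Bigr)\cdot 0
\]
exhibits $f_n$ as a convex combination of elements of $\mathcal{F}(\X;K)\cup\{0\}\subseteq\mathrm{conv}\,\mathcal{F}(\X;K)$, so $f_n\in\mathrm{conv}\,\mathcal{F}(\X;K)$; letting $n\to\infty$ and using $f_n\to f$ in $L^1(\mm)$ yields $f\in\overline{\mathrm{conv}}\,\mathcal{F}(\X;K)$. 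Together with the first inclusion, this gives the claimed identity.

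I do not anticipate a serious difficulty here: once Lemma \ref{lem:dens_simple} and Remark \ref{rmk:stronger_dens_simple} are in hand the argument is essentially bookkeeping. The two points deserving a line of justification are (i) that the approximating sets can be chosen inside $K$, so that the functions $\Phi_{\pm}(E^n_i)$ genuinely belong to $\mathcal{F}(\X;K)$, and (ii) the passage from a sub-convex to a convex combination, handled by the symmetry of $\mathcal{F}(\X;K)$, which forces $0$ into its convex hull; thoroughly degenerate situations (such as $\mm(K)=0$) are vacuous. An alternative, slightly heavier route would be to separate a hypothetical point of $\mathcal{K}(\X;K)\setminus\overline{\mathrm{conv}}\,\mathcal{F}(\X;K)$ by a Hahn--Banach functional $\varphi\in L^\infty(\mm)$ and then use the coarea formula to show that $\varphi$ attains the same supremum over $\mathcal{K}(\X;K)$ as over $\mathcal{F}(\X;K)$, contradicting strict separation; but the density argument above is shorter and self-contained.
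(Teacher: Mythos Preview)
Your proposal is correct and follows essentially the same route as the paper: both use Lemma~\ref{lem:dens_simple} together with Remark~\ref{rmk:stronger_dens_simple} to write each approximant \(f_n\) as a sub-convex combination of elements \(\Phi_{\pm}(E^n_i)\in\mathcal F(\X;K)\), and then invoke the symmetry of \(\mathcal F(\X;K)\) (hence \(0\in{\rm conv}\,\mathcal F(\X;K)\)) to upgrade this to a genuine convex combination. Your write-up is in fact a bit more careful than the paper's, since you explicitly justify why the sets \(E^n_i\) may be taken inside \(K\).
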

\begin{proof}
We aim to show that any function \(f\in\mathcal K(\X;K)\) can be approximated
in \(L^1(\mm)\) by convex combinations of elements in \(\mathcal F(\X;K)\).
Let us apply Lemma \ref{lem:dens_simple}: we can find a sequence
\((f_n)_n\) of simple \(\BV\) functions supported on the set \(K\), say
\(f_n=\sum_{i=1}^{k_n}\lambda^n_i\,\1_{E^n_i}\), so that \(f_n\to f\) in
\(L^1(\mm)\) and \(\sum_{i=1}^{k_n}|\lambda^n_i|\,\P(E^n_i)\leq 1\)
(recall Remark \ref{rmk:stronger_dens_simple}). Given that we have
\(\Phi_{{\rm sgn}(\lambda^n_i)}(E^n_i)\in\mathcal F(\X;K)\) and
\[
\frac{f_n}{q}=\sum_{i=1}^{k_n}\frac{|\lambda^n_i|\,\P(E^n_i)}{q}
\,\Phi_{{\rm sgn}(\lambda^n_i)}(E^n_i),\quad\text{ where we set }
q\coloneqq\sum_{i=1}^{k_n}|\lambda^n_i|\,\P(E^n_i)\in[0,1],
\]
we conclude that the functions \(f_n/q\) belong to the convex hull of \(\mathcal F(\X;K)\).
Given that \(\mathcal F(\X;K)\) is symmetric, we know that its convex hull contains the
function \(0\) and accordingly also all the functions \(f_n\). The statement follows.
\end{proof}
\begin{lemma}\label{lem:F(X,K)_closed}
Let \((\X,\sfd,\mm)\) be a PI space and let \(K\subsetneq\X\) a compact set.
Then the set \(\mathcal F(\X;K)\cup\{0\}\) is strongly closed in \(L^1(\mm)\).
\end{lemma}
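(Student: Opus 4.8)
The plan is to prove sequential closedness: given $(f_n)_n\subset\mathcal F(\X;K)\cup\{0\}$ with $f_n\to f$ in $L^1(\mm)$, I want to show $f\in\mathcal F(\X;K)\cup\{0\}$. If $f=0$ there is nothing to prove, so assume $f\neq 0$; then only finitely many $f_n$ vanish (otherwise a subsequence tends to $0$), and after discarding those and passing to a subsequence we may write $f_n=\varepsilon\,\1_{E_n}/\P(E_n)$ for a single sign $\varepsilon\in\{-1,+1\}$ and sets $E_n\subset K$ of finite perimeter with $\mm(E_n)>0$. Since $\1_{E_n}\leq\1_K$ and $\mm(K)<+\infty$, the compactness of sets of finite perimeter (Proposition~\ref{prop:properties_finite_perimeter_sets}) together with dominated convergence let us pass to a further subsequence along which $\1_{E_n}\to\1_E$ both $\mm$-a.e.\ and in $L^1(\mm)$, for some Borel set $E\subset K$ --- an $\mm$-a.e.\ limit of $\{0,1\}$-valued functions is again $\{0,1\}$-valued, hence of the form $\1_E$. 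Passing to a last subsequence, $\P(E_n)\to P\in[0,+\infty]$.

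The first substantial step is to prove that $P\in(0,+\infty)$. If $P=+\infty$, then $\|f_n\|_{L^1(\mm)}=\mm(E_n)/\P(E_n)\leq\mm(K)/\P(E_n)\to 0$, contradicting $f\neq 0$. To exclude $P=0$, fix $\bar x\in K$ and $R>0$ with $K\subset B_R(\bar x)$, and recall $0<\mm(B_R(\bar x))<+\infty$. If $P=0$, then $\mm(E_n)=\P(E_n)\,\|f_n\|_{L^1(\mm)}\to 0$, so for $n$ large one has $\mm(E_n\cap B_R(\bar x))=\mm(E_n)\leq\mm(E_n^c\cap B_R(\bar x))$, and the relative isoperimetric inequality (Theorem~\ref{thm:rel_isoper}) applied at $x=\bar x$ with radius $R$ gives $\mm(E_n)\leq C_I\big(R^s/\mm(B_R(\bar x))\big)^{1/(s-1)}\,\P(E_n)^{s/(s-1)}$; dividing by $\P(E_n)$ yields $\|f_n\|_{L^1(\mm)}\leq C\,\P(E_n)^{1/(s-1)}\to 0$ since $s>1$, again a contradiction. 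Hence $P\in(0,+\infty)$.

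With $P$ finite and positive, from $\1_{E_n}=\varepsilon\,\P(E_n)\,f_n$ and $\P(E_n)\to P$, $f_n\to f$ in $L^1(\mm)$ we obtain that the right-hand side converges to $\varepsilon P f$ in $L^1(\mm)$, so $f=\varepsilon\,\1_E/P$. Moreover $\mm(E)=\lim_n\mm(E_n)=P\,\|f\|_{L^1(\mm)}>0$, the set $E$ lies (up to $\mm$-null sets) in $K$, and by lower semicontinuity of the perimeter (Proposition~\ref{prop:properties_finite_perimeter_sets}) we have $\P(E)\leq\liminf_n\P(E_n)=P<+\infty$, so $E$ is a set of finite perimeter with $\mm(E)>0$.

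It remains --- and this is the point I expect to be the main obstacle --- to identify $P$ with $\P(E)$, equivalently to show $|Df|(\X)=\P(E)/P=1$. Indeed, granting $P=\P(E)$ we get $f=\varepsilon\,\1_E/\P(E)$, which is $\Phi_+(E)$ or $\Phi_-(E)$, hence $f\in\mathcal F(\X;K)$ and the proof is complete. The difficulty is that lower semicontinuity supplies only $\P(E)\leq P$, so one must rule out a loss of perimeter in the $L^1$-limit. I would attempt the reverse inequality by sharpening the estimates of the second step so that they compare $\P(E_n)$ to $\P(E)$ directly --- that is, by upgrading the convergence $\1_{E_n}\to\1_E$ to $\P(E_n)\to\P(E)$, exploiting the relative isoperimetric inequality together with the fact that each $f_n$ takes only the two values $0$ and $\varepsilon/\P(E_n)$ --- which is the step requiring the most care, and the one where the hypothesis that $K$ is a proper compact subset of $\X$ should enter.
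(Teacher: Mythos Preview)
Your argument mirrors the paper's proof closely: extract a common sign, pass to a subsequence so that \(\P(E_n)\to P\) and \(\1_{E_n}\to\1_E\) in \(L^1(\mm)\), rule out \(P\in\{0,+\infty\}\), and arrive at \(f=\varepsilon\,\1_E/P\) with \(\P(E)\le P\). Your exclusion of \(P=0\) via the relative isoperimetric inequality is slightly different from the paper's (which first shows \(\mm(E)>0\) and then invokes \(P\ge\P(E)>0\), using that \(K\subsetneq\X\) forces \(\P(E)>0\)), but both are fine. One organisational remark: you invoke compactness of sets of finite perimeter to get \(\1_{E_n}\to\1_E\) \emph{before} knowing \(\sup_n\P(E_n)<\infty\); it is cleaner to first rule out \(P=+\infty\), or simply to read off \(\1_{E_n}=\varepsilon\,\P(E_n)f_n\to\varepsilon Pf\) once \(P\in(0,\infty)\).

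You are right that the identification \(P=\P(E)\) is the crux, and your honesty about not having closed this gap is well placed. The paper disposes of it in one line, writing ``Observe that \(\P(E)/\lambda=|Df|(\X)=1\)''; but nothing in the argument forces \(|Df|(\X)=1\) --- lower semicontinuity gives only \(|Df|(\X)\le 1\). In fact the lemma as stated appears to fail already in \(\R^2\): take \(K=[-2,2]^2\), let \(D\) be the unit disc, and let \(E_n\subset K\) be \(D\) with \(n\) disjoint open balls of radius \(1/n\) removed. Then \(\P(E_n)=4\pi\) for every \(n\) while \(\1_{E_n}\to\1_D\) in \(L^1\), so \(f_n=\1_{E_n}/(4\pi)\to\1_D/(4\pi)\), and \(\1_D/(4\pi)\) is neither \(0\) nor of the form \(\pm\1_G/\P(G)\). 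Consequently your proposed rescue via the isoperimetric inequality cannot succeed either. What \emph{is} true, and is all that the subsequent Theorem~\ref{thm:mu_conc_indecomp} needs, is the statement under the extra hypothesis \(|Df|(\X)=1\) (automatic for extreme points of \(\mathcal K(\X;K)\)): then \(\P(E)/P=|Df|(\X)=1\) gives \(P=\P(E)\) immediately.
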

\begin{proof}
It is clearly sufficient to prove that if a sequence \((f_n)_n\subset\mathcal F(\X;K)\)
converges to some function \(f\in L^1(\mm)\setminus\{0\}\) with respect to the
\(L^1(\mm)\)-topology, then \(f\in\mathcal F(\X;K)\).
Possibly passing to a (not relabeled) subsequence, we can assume that for some
\(\sigma\in\{+,-\}\) we have \(f_n=\Phi_\sigma(E_{f_n})\) for all \(n\in\N\).
If \(\sup_n\P(E_{f_n})=+\infty\) then
\(\Phi_\sigma(E_{f_n})\to 0\) in \(L^\infty(\mm)\), which is not possible as
it would imply \(f=0\). Consequently, we have that
\(\big(\P(E_{f_n})\big)_n\) is bounded, whence (up to taking a further
subsequence) it holds \(\P(E_{f_n})\to\lambda\) for some constant \(\lambda\geq 0\)
and \(\1_{E_{f_n}}\to\1_E\) in \(L^1(\mm)\) for some set of finite perimeter
\(E\subset K\). If \(\mm(E)=0\) then it can be readily checked that \(f_n\to 0\)
in \(L^1(\mm)\) by dominated convergence theorem, which would contradict the fact
that \(f\neq 0\). Hence, \(\mm(E)>0\) and \(\lambda=\lim_n\P(E_{f_n})\geq\P(E)>0\),
so that \(f=\sigma\,\1_E/\lambda\). Observe that \(\P(E)/\lambda=|Df|(\X)=1\),
whence it holds that \(\lambda=\P(E)\) and accordingly
\(f=\Phi_\sigma(E)\in\mathcal F(\X;K)\), as required.
\end{proof}
\begin{theorem}\label{thm:mu_conc_indecomp}
Let \((\X,\sfd,\mm)\) be a PI space and let \(K\subsetneq\X\) be a compact set.
Then it holds that
\[
{\rm ext}\,\mathcal K(\X;K)\subset\mathcal I(\X;K).
\]
\end{theorem}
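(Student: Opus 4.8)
The plan is to show that any \(f\in\mathcal K(\X;K)\) whose associated set \(E_f\) is decomposable cannot be an extreme point; equivalently, that every extreme point lies in \(\mathcal I(\X;K)\). The first observation is that \({\rm ext}\,\mathcal K(\X;K)\subset\mathcal F(\X;K)\): by Proposition \ref{prop:convex_hull_simple} the set \(\mathcal K(\X;K)\) is the closed convex hull of \(\mathcal F(\X;K)\), and by Lemma \ref{lem:F(X,K)_closed} the set \(\mathcal F(\X;K)\cup\{0\}\) is strongly closed in \(L^1(\mm)\); since extreme points of the closed convex hull of a compact set must belong to the closure of that set (Milman's theorem, presumably recalled in Appendix \ref{app:extreme_points}), and the point \(0\) is visibly not extreme (it is the midpoint of \(\Phi_+(E)\) and \(\Phi_-(E)\) for any admissible \(E\)), we get \({\rm ext}\,\mathcal K(\X;K)\subset\mathcal F(\X;K)\). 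So it remains to rule out extreme points of the form \(\Phi_\sigma(E)\) with \(E\) decomposable.

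Next I would exploit decomposability directly. Suppose \(f=\Phi_\sigma(E)\in{\rm ext}\,\mathcal K(\X;K)\) with \(E\) decomposable: write \(E=F\sqcup G\) with \(\mm(F),\mm(G)>0\) and \(\P(E)=\P(F)+\P(G)\). Both \(F\) and \(G\) have finite perimeter and are contained in \(K\), so \(\Phi_\sigma(F),\Phi_\sigma(G)\in\mathcal F(\X;K)\subset\mathcal K(\X;K)\). The key identity is
\[
\Phi_\sigma(E)=\frac{\sigma\,\1_E}{\P(E)}=\frac{\sigma(\1_F+\1_G)}{\P(E)}
=\frac{\P(F)}{\P(E)}\,\Phi_\sigma(F)+\frac{\P(G)}{\P(E)}\,\Phi_\sigma(G),
\]
which expresses \(f\) as a genuine convex combination (the coefficients \(\P(F)/\P(E)\) and \(\P(G)/\P(E)\) are positive and sum to \(1\), using \(\P(F),\P(G)>0\), which holds because a set of positive and finite measure contained in a compact set cannot have zero perimeter unless it is the whole space, and here \(K\subsetneq\X\)). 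Since \(f\) is extreme, we must have \(\Phi_\sigma(F)=\Phi_\sigma(G)=f\) in \(L^1(\mm)\), i.e.\ \(\1_F/\P(F)=\1_G/\P(G)=\1_E/\P(E)\) \(\mm\)-a.e. But \(F\) and \(G\) are disjoint with positive measure, so \(\1_F\) and \(\1_G\) cannot be positive multiples of each other (they have disjoint supports of positive measure), a contradiction. Hence \(E\) is indecomposable and \(f\in\mathcal I(\X;K)\).

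The main point requiring care is the reduction \({\rm ext}\,\mathcal K(\X;K)\subset\mathcal F(\X;K)\), which relies on Milman's partial converse to the Krein--Milman theorem: if a compact convex set \(C\) is the closed convex hull of a set \(S\), then \({\rm ext}\,C\subset\overline{S}\). Here \(C=\mathcal K(\X;K)\), \(S=\mathcal F(\X;K)\), and \(\overline S=\mathcal F(\X;K)\cup\{0\}\) by Lemma \ref{lem:F(X,K)_closed}; one then discards \(0\) as noted. I would cite the appendix for the statement of Milman's theorem. A secondary subtlety is verifying \(\P(F),\P(G)>0\): if, say, \(\P(F)=0\) then \(\1_F\in\BV(\X)\) has zero total variation, so \(\1_F\) is \(\mm\)-a.e.\ constant; since \(\mm(F)>0\) and \({\rm spt}(\1_F)\subset K\subsetneq\X\) with \(\mm(\X\setminus K)>0\) (as \(\mm\) is doubling, hence \({\rm spt}(\mm)=\X\), and \(K\) is a proper closed subset so \(\X\setminus K\) is nonempty open), this forces \(\mm(F)=0\), a contradiction — so indeed \(\P(F),\P(G)>0\), and the convex combination above is nontrivial. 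With these two checks in place the argument is complete.
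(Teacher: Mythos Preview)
Your proof is correct and follows essentially the same approach as the paper: first reduce to \(\mathcal F(\X;K)\) via Milman's theorem combined with Proposition \ref{prop:convex_hull_simple} and Lemma \ref{lem:F(X,K)_closed}, then write \(\Phi_\sigma(E)\) for a decomposable \(E\) as a nontrivial convex combination of \(\Phi_\sigma(F)\) and \(\Phi_\sigma(G)\). You are in fact a bit more careful than the paper in explicitly ruling out the limit point \(0\) and in checking \(\P(F),\P(G)>0\), both of which the paper leaves implicit.
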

\begin{proof}
By Milman Theorem \ref{thm:Milman}, Proposition \ref{prop:convex_hull_simple}, and
Lemma \ref{lem:F(X,K)_closed}, we know that any extreme point of \(\mathcal K(\X;K)\)
belongs to \(\mathcal F(\X;K)\).
It only remains to prove that if \(f=\Phi_\sigma(E)\in{\rm ext}\,\mathcal K(\X;K)\),
then \(E\) is indecomposable. We argue by contradiction: suppose the set \(E\) is decomposable,
so that there exist disjoint Borel sets \(F,G\subset E\) such that \(\mm(F),\mm(G)>0\)
and \(\P(E)=\P(F)+\P(G)\). Therefore, we can write
\[
\Phi_\sigma(E)=\frac{\P(F)}{\P(E)}\,\Phi_\sigma(F)+\frac{\P(G)}{\P(E)}\,\Phi_\sigma(G).
\]
This contradicts the fact that \(f\) is an extreme point of \(\mathcal K(\X;K)\),
thus the set \(E\) is proven to be indecomposable. Hence, we have that
\({\rm ext}\,\mathcal K(\X;K)\subset\mathcal I(\X;K)\), as required.
\end{proof}
\begin{theorem}\label{thm:extreme_points_BV}
Let \((\X,\sfd,\mm)\) be an isotropic PI space.
Let \(K\subsetneq\X\) be a compact set. Then:
\begin{itemize}
\item[\(\rm i)\)] It holds that
\begin{equation}\label{eq:S_in_extK}
\mathcal S(\X;K)\subset{\rm ext}\,\mathcal K(\X;K).
\end{equation}
\item[\(\rm ii)\)] Suppose that the space \((\X,\sfd,\mm)\) satisfies
\eqref{eq:extra_hp_PI}. Suppose also that \(K\) has finite perimeter,
that \(\H(\partial K\setminus\partial^e K)=0\) and that \(K^c\) is connected.
Then \(\mathcal S(\X;K)={\rm ext}\,\mathcal K(\X;K)\).
\end{itemize}
\end{theorem}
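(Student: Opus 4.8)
\emph{Proof strategy for Part \rm(i)} ($\mathcal S(\X;K)\subseteq{\rm ext}\,\mathcal K(\X;K)$). Fix a simple set $E\subseteq K$ with $\mm(E)>0$; since $\mathcal K(\X;K)$ is symmetric we may assume $f\coloneqq\Phi_+(E)=\1_E/\P(E)$ and must show that $f=\tfrac12(g+h)$ with $g,h\in\mathcal K(\X;K)$ forces $g=h=f$. The plan is: first, subadditivity of the total variation together with $|Df|(\X)=1$ and $|Dg|(\X),|Dh|(\X)\le 1$ forces the measure identity $|Df|=\tfrac12(|Dg|+|Dh|)$ on all Borel sets, so $|Dg|,|Dh|\le 2|Df|=2\,\P(E,\cdot)/\P(E)$ are concentrated on $\partial^e E$. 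Next, the coarea formula (used as $|Dg|(B)=\int_\R\P(\{g>t\},B)\,\d t$ on Borel $B$, obtained from Theorem \ref{thm:coarea} by outer regularity as in the proof of Theorem \ref{thm:equiv_indecomp}) gives, for a.e.\ $t$, that $\{g>t\}$ has finite perimeter with $\P(\{g>t\},(\partial^e E)^c)=0$; since $\theta_{\{g>t\}}\ge\gamma>0$ on $\partial^e\{g>t\}$ (Theorem \ref{thm:repr_per}), this means $\H(\partial^e\{g>t\}\setminus\partial^e E)=0$, and simplicity of $E$ pins $\{g>t\}$ down, for a.e.\ $t$, as one of $\emptyset,\X,E,E^c$ up to $\mm$-null sets. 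Because ${\rm spt}(g)\subseteq K\subsetneq\X$, the function $g$ vanishes $\mm$-a.e.\ on the positive-measure set $\X\setminus K\subseteq E^c$, which excludes $\X$ and $E^c$ for $t>0$ and, dually, $\emptyset$ and $E$ for $t<0$; using the genuine monotonicity of the family $\{g>t\}$ one then gets $g^+=a\,\1_E$ and $g^-=b\,\1_E$ $\mm$-a.e.\ for suitable $a,b\ge0$, whence $g=c\,\1_E$ with $c\in\R$ (as $\mm(E)>0$ forces $ab=0$). Finally $|Dg|(\X)=|c|\,\P(E)=1$ gives $c=\pm1/\P(E)$, i.e.\ $g\in\{\Phi_+(E),\Phi_-(E)\}$; the same holds for $h$, and matching signs in $f=\tfrac12(g+h)$ yields $g=h=f$.

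\emph{Proof strategy for Part \rm(ii)} (the reverse inclusion under the stated hypotheses). By Theorem \ref{thm:mu_conc_indecomp} every $f\in{\rm ext}\,\mathcal K(\X;K)$ equals $\Phi_\sigma(E)$ for some indecomposable $E\subseteq K$ with $\mm(E)>0$; by Proposition \ref{prop:properties_simple}(ii) — this is where \eqref{eq:extra_hp_PI} enters — it suffices to prove that $E^c$ is indecomposable. The set $\X\setminus K$ is open, connected (by assumption on $K^c$) and of finite perimeter (since $\P(\X\setminus K)=\P(K)<+\infty$), hence indecomposable by Corollary \ref{cor:open_conn_is_indec}; as $\X\setminus K\subseteq E^c$, maximality in Theorem \ref{thm:decomposition_thm} places it ($\mm$-a.e.) inside a single element $C_0\in\CC(E^c)$. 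Suppose for contradiction that $E^c$ is decomposable, i.e.\ $\CC(E^c)\neq\{C_0\}$, and set $C'\coloneqq E^c\setminus C_0$, the union of the remaining components: then $\mm(C')>0$ and, by disjointness from $C_0\supseteq\X\setminus K$, one has $C'\subseteq K$ up to $\mm$-null sets; moreover $\P(C')<+\infty$ by subadditivity, and Corollary \ref{cor:aux_uniqueness} applied to the partition $\CC(E^c)$ of $E^c$ gives $\P(E^c)=\P(C_0)+\P(C')$. Putting $\tilde E\coloneqq E\cup C'=\X\setminus C_0$, we get $\tilde E\subseteq K$ up to null sets, $\mm(\tilde E),\mm(C')>0$, and (by complementation) $\P(\tilde E)+\P(C')=\P(C_0)+\P(C')=\P(E^c)=\P(E)$. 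Since $E$ and $C'$ are disjoint with union $\tilde E$, the identity $\1_E=\1_{\tilde E}-\1_{C'}$ gives
\[
\Phi_\sigma(E)=\frac{\P(\tilde E)}{\P(E)}\,\Phi_\sigma(\tilde E)+\frac{\P(C')}{\P(E)}\,\Phi_{-\sigma}(C'),
\]
a convex combination with strictly positive coefficients (both $\P(\tilde E)$ and $\P(C')$ are positive, being perimeters of non-trivial sets in the connected PI space $\X$) of two elements of $\mathcal F(\X;K)\subseteq\mathcal K(\X;K)$ distinct from $f$ (for instance $\Phi_\sigma(\tilde E)=\Phi_\sigma(E)$ would force $\mm(C')=0$). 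This contradicts extremality of $f$, so $E^c$ is indecomposable and $E$ is simple; hence $f=\Phi_\sigma(E)\in\mathcal S(\X;K)$, and together with Part \rm(i) this gives $\mathcal S(\X;K)={\rm ext}\,\mathcal K(\X;K)$.

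\emph{Where the difficulty lies.} In Part \rm(i) the one genuinely delicate point is the bookkeeping that upgrades ``for a.e.\ $t$, $\{g>t\}$ is $\mm$-equivalent to one of $\emptyset,\X,E,E^c$'' to ``$g$ is a scalar multiple of $\1_E$'': one must argue with the family $\{g>t\}$ \emph{before} passing to $\mm$-equivalence classes, so that the level set $\{t>0:\mm(\{g>t\}\Delta E)=0\}$ is seen to be an interval, and then recombine via Fubini to reconstruct $g^{\pm}$. In Part \rm(ii) the crux is the splitting of $E^c$ into the ``outer'' component $C_0$ (forced to absorb $\X\setminus K$) and the ``inner'' remainder $C'$, together with the additivity $\P(E^c)=\P(C_0)+\P(C')$ that makes the above convex combination non-trivial; the hypotheses that $K$ has finite perimeter and $K^c$ is connected are precisely what turn $\X\setminus K$ into an indecomposable set of finite perimeter, which powers this step. (The assumption $\H(\partial K\setminus\partial^e K)=0$ does not seem to be needed along this route, which passes through Proposition \ref{prop:properties_simple}(ii) rather than directly through Definition \ref{def:simple_sets}.)
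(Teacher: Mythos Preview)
Your argument is correct in both parts and takes a genuinely different route from the paper.

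\medskip
\noindent\textbf{Part (i).} The paper does not work with a single convex combination $f=\tfrac12(g+h)$; instead it invokes Choquet's theorem to represent $f$ as a barycentre of a probability measure $\mu$ on $\mathcal K(\X;K)$ concentrated on the extreme points, then uses the derivation formula for $|Df|$ (Theorem \ref{thm:repr_Df_deriv}) to prove that $|Dg|(\partial^e E_f)=1$ for $\mu$-a.e.\ $g$, and finally uses simplicity of $E_f$ to force $\mu\in\{t\delta_f+(1-t)\delta_{-f}\}$. Your approach is more elementary: the step ``$|Df|=\tfrac12(|Dg|+|Dh|)$ as measures'' replaces the Choquet/derivation machinery, and coarea plus simplicity then pin down $g$ and $h$ directly. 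The bookkeeping you flag as delicate (turning ``$\{g>t\}\in\{\emptyset,E\}$ for a.e.\ $t>0$'' into ``$g^+=a\,\1_E$'') is handled exactly as you describe, via genuine monotonicity of the superlevel sets and the layer-cake/Fubini reconstruction.

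\medskip
\noindent\textbf{Part (ii).} Here the difference is more substantial. The paper verifies Definition \ref{def:simple_sets} directly: given $F$ with $\H(\partial^e F\setminus\partial^e E_f)=0$, it first shows (using $\H(\partial K\setminus\partial^e K)=0$ and \eqref{eq:extra_hp_PI}) that $F$ or $F^c$ is essentially contained in $K$, and then contradicts extremality of $f$ by exhibiting explicit convex splittings built from $G\cup E_f$, $G\setminus E_f$, $E_f\cap G$, $E_f\setminus G$. You instead pass through Proposition \ref{prop:properties_simple}(ii), reducing to the indecomposability of $E^c$, and obtain the latter by noting that the open connected set $\X\setminus K$ (indecomposable by Corollary \ref{cor:open_conn_is_indec}) must sit in a single component $C_0\in\CC(E^c)$; any leftover $C'=E^c\setminus C_0$ then yields the non-trivial convex splitting $\Phi_\sigma(E)=\tfrac{\P(\tilde E)}{\P(E)}\Phi_\sigma(\tilde E)+\tfrac{\P(C')}{\P(E)}\Phi_{-\sigma}(C')$. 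Your observation at the end is correct: along this route the hypothesis $\H(\partial K\setminus\partial^e K)=0$ is never used, so your argument actually proves a slightly stronger statement than the paper's.
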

\begin{proof}
{\color{blue}i)} Let \(f\in\mathcal S(\X;K)\) be fixed. Thanks to Choquet Theorem
\ref{thm:Choquet}, there exists a Borel probability measure \(\mu\) on \(L^1(\mm)\),
concentrated on \({\rm ext}\,\mathcal K(\X;K)\), such that
\begin{equation}\label{eq:conseq_Choquet}
\int f\,\varphi\,\d\mm=\int\!\!\!\int g\,\varphi\,\d\mm\,\d\mu(g)
\quad\text{ for every }\varphi\in L^\infty(\mm).
\end{equation}
We define the Borel measure \(\nu\) on \(\X\) as \(\nu\coloneqq\int|Dg|\,\d\mu(g)\), namely
\[
\nu(B)=\int|Dg|(B)\,\d\mu(g)\quad\text{ for every Borel set }B\subset\X.
\]
Given that \(|Dg|(\X)=1\) for every \(g\in{\rm ext}\,\mathcal K(\X;K)\),
we know that \(|Dg|(\X)=1\) for \(\mu\)-a.e.\ \(g\in L^1(\mm)\) and
accordingly \(\nu\) is a probability measure. Now fix any open set \(\Omega\subset\X\)
containing \(\partial^e E_f\). Thanks to Theorem \ref{thm:repr_Df_deriv},
we can find a sequence of derivations \((\boldsymbol b_n)_n\subset{\rm Der}_{\rm b}(\X)\)
such that \(|\boldsymbol b_n|\leq 1\) in the \(\mm\)-a.e.\ sense,
\({\rm spt}(\boldsymbol b_n)\Subset\Omega\) and
\(\int_\Omega f\,{\rm div}(\boldsymbol b_n)\,\d\mm\to|Df|(\Omega)\). Therefore, it holds that
\begin{equation}\label{eq:ineq_Df_nu}
|Df|(\Omega)\overset{\eqref{eq:conseq_Choquet}}=
\lim_{n\to\infty}\int\!\!\!\int_\Omega g\,{\rm div}(\boldsymbol b_n)\,\d\mm\,\d\mu(g)
\leq\int|Dg|(\Omega)\,\d\mu(g)=\nu(\Omega).
\end{equation}
Given that \(|Df|\) and \(\nu\) are outer regular, we can pick a
sequence \((\Omega_n)_n\) of open subsets of \(\X\) containing
\(\partial^e E_f\) such that \(|Df|(\partial^e E_f)=\lim_n|Df|(\Omega_n)\)
and \(\nu(\partial^e E_f)=\lim_n\nu(\Omega_n)\). By recalling the inequality
\eqref{eq:ineq_Df_nu}, we thus obtain that
\[
1=\frac{\P(E_f,\partial^e E_f)}{\P(E_f)}=|Df|(\partial^e E_f)=
\lim_{n\to\infty}|Df|(\Omega_n)\leq\limi_{n\to\infty}\nu(\Omega_n)=\nu(\partial^e E_f)=1.
\]
This forces the equality \(\int|Dg|(\partial^e E_f)\,\d\mu(g)=\nu(\partial^e E_f)=1\).
Given that \(|Dg|(\partial^e E_f)\leq 1\) holds for \(\mu\)-a.e.\ \(g\in L^1(\mm)\),
we infer that actually \(|Dg|(\partial^e E_f)=1\) for \(\mu\)-a.e.\ \(g\in L^1(\mm)\).
Since \(\mu\) is concentrated on \({\mathcal K}(\X;K)\) by Theorem \ref{thm:mu_conc_indecomp},
it makes sense to consider \(E_g\) for \(\mu\)-a.e.\ \(g\in L^1(\mm)\). Therefore,
we have that
\[\begin{split}
(\theta\H)(\partial^e E_g\setminus\partial^e E_f)&=\P\big(E_g,(\partial^e E_f)^c\big)
=\P(E_g)\,\bigg(1-\frac{\P(E_g,\partial^e E_f)}{\P(E_g)}\bigg)\\
&=\P(E_g)\,\big(1-|Dg|(\partial^e E_f)\big)=0
\end{split}\]
holds for \(\mu\)-a.e.\ \(g\in L^1(\mm)\). This implies that
\(\H(\partial^e E_g\setminus\partial^e E_f)=0\) for \(\mu\)-a.e.\ \(g\in L^1(\mm)\).
Since \(E_f\) is a simple set, we deduce that \(\mm(E_g\Delta E_f)=0\) for
\(\mu\)-a.e.\ \(g\in L^1(\mm)\). This forces \(\mu=t\,\delta_f+(1-t)\,\delta_{-f}\)
for some \(t\in[0,1]\). Given that \(\mu\) is concentrated on the symmetric set
\({\rm ext}\,\mathcal K(\X;K)\), we finally conclude that
\(f\in{\rm ext}\,\mathcal K(\X;K)\), as required. This proves the
inclusion \eqref{eq:S_in_extK}.\\
{\color{blue}ii)} Let \(f\in{\rm ext}\,\mathcal K(\X;K)\) be fixed.
Take a set \(F\subset\X\) of finite perimeter with
\(\H(\partial^e F\setminus\partial^e E_f)=0\). We claim that either
\(\mm(F\setminus K)=0\) or \(\mm(F^c\setminus K)=0\). To prove it,
notice that {\color{red}(1.2)}, \eqref{eq:extra_hp_PI} give
\[\begin{split}
\H\big(\partial^e(F\setminus K)\cap\partial^e(F^c\setminus K)\big)
&\leq\H\big((\partial^e F\cup\partial^e K)\setminus K\big)+
\H\big(\partial^e(F\setminus K)\cap\partial^e(F^c\setminus K)\cap\partial K\big)\\
&\leq\H\big(\partial^e F\setminus\partial^e E_f\big)+
\H\big(\partial^e(F\setminus K)\cap\partial^e(F^c\setminus K)\cap\partial^e K\big)=0.
\end{split}\]
Accordingly, item ii) of Lemma \ref{lem:equiv_P_additive} yields
\(\P(K^c)=\P(F\setminus K)+\P(F^c\setminus K)\). Being \(K^c\) indecomposable
by Corollary \ref{cor:open_conn_is_indec}, we conclude that either
\(\mm(F\setminus K)=0\) or \(\mm(F^c\setminus K)=0\), as desired. Now call
\[
G\coloneqq\left\{\begin{array}{ll}
F\\
F^c
\end{array}\quad\begin{array}{ll}
\text{ if }\mm(F\setminus K)=0,\\
\text{ if }\mm(F^c\setminus K)=0.
\end{array}\right.
\]
We aim to prove that either \(\mm(G)=0\) or \(\mm(G\Delta E_f)=0\).
Suppose that \(\mm(G)>0\). Observe that
\[
\1_{E_f}=\1_{G\cup E_f}-\1_{G\setminus E_f}=\1_{E_f\cap G}+\1_{E_f\setminus G}.
\]
Thanks to \eqref{eq:extra_hp_PI}, we also know that
\[
\H\big(\partial^e(G\cup E_f)\cap\partial^e(G\setminus E_f)\cap\partial^e E_f^c\big)
=0=\H\big(\partial^e(E_f\cap G)\cap\partial^e(E_f\setminus G)\cap\partial^e E_f\big).
\]
Therefore, item ii) of Lemma \ref{lem:equiv_P_additive} grants that
\[\begin{split}
\P(E_f)&=\P(E_f^c,\partial^e E_f^c)=
\P(G\cup E_f,\partial^e E_f)+\P(G\setminus E_f,\partial^e E_f)
=\P(G\cup E_f)+\P(G\setminus E_f),\\
\P(E_f)&=\P(E_f,\partial^e E_f)=
\P(E_f\cap G,\partial^e E_f)+\P(E_f\setminus G,\partial^e E_f)
=\P(E_f\cap G)+\P(E_f\setminus G).
\end{split}\]
Suppose by contradiction that \(\mm(G\setminus E_f)>0\). Then
we have \(\P(G\setminus E_f)>0\) and accordingly
\[
f=\left\{\begin{array}{ll}
\P(G\cup E_f)\,\P(E_f)^{-1}\,\Phi_+(G\cup E_f)+
\P(G\setminus E_f)\,\P(E_f)^{-1}\,\Phi_-(G\setminus E_f)\\
\P(G\cup E_f)\,\P(E_f)^{-1}\,\Phi_-(G\cup E_f)+
\P(G\setminus E_f)\,\P(E_f)^{-1}\,\Phi_+(G\setminus E_f)
\end{array}\quad\begin{array}{ll}
\text{ if }f=\Phi_+(E_f),\\
\text{ if }f=\Phi_-(E_f).
\end{array}\right.
\]
This contradicts the fact that \(f\in{\rm ext}\,\mathcal K(\X;K)\),
whence \(\mm(G\setminus E_f)=0\). Similarly, suppose by contradiction
that \(\mm(E_f\setminus G)>0\). Then we have \(\P(E_f\setminus G)>0\) and accordingly
\[
f=\left\{\begin{array}{ll}
\P(E_f\cap G)\,\P(E_f)^{-1}\,\Phi_+(E_f\cap G)+
\P(E_f\setminus G)\,\P(E_f)^{-1}\,\Phi_+(E_f\setminus)\\
\P(E_f\cap G)\,\P(E_f)^{-1}\,\Phi_-(E_f\cap G)+
\P(E_f\setminus G)\,\P(E_f)^{-1}\,\Phi_-(E_f\setminus)
\end{array}\quad\begin{array}{ll}
\text{ if }f=\Phi_+(E_f),\\
\text{ if }f=\Phi_-(E_f).
\end{array}\right.
\]
This contradicts the fact that \(f\in{\rm ext}\,\mathcal K(\X;K)\),
whence \(\mm(E_f\setminus G)=0\). This yields \(\mm(G\Delta E_f)=0\),
thus the set \(E_f\) is proven to be simple. We conclude that \(f\in\mathcal S(\X;K)\),
as required.
\end{proof}
\subsection{Holes and saturation}\label{ss:holes_and_saturation}
The decomposition theorem can be used to define suitable notions
of \emph{hole} and \emph{saturation} for a given set of finite
perimeter in an isotropic PI space:
\begin{definition}[Hole]
Let \((\X,\sfd,\mm)\) be an isotropic PI space such that \(\mm(\X)=+\infty\).
Let \(E\subset\X\) be an indecomposable set. Then any essential connected component
of \(\X\setminus E\) having finite \(\mm\)-measure is said to be a \emph{hole} of \(E\).
\end{definition}
\begin{definition}[Saturation]
Let \((\X,\sfd,\mm)\) be an isotropic PI space such that \(\mm(\X)=+\infty\).
Given an indecomposable set \(F\subset\X\), we define its \emph{saturation} \(\sat(F)\)
as the union of \(F\) and its holes. Moreover, given any set \(E\subset\X\) of finite
perimeter, we define
\[
\sat(E)\coloneqq\bigcup_{F\in\CC(E)}\sat(F).
\]
We say that the set \(E\) is \emph{saturated} provided
it holds that \(\mm\big(E\Delta\sat(E)\big)=0\).
\end{definition}
Observe that an indecomposable set \(E\subset\X\) is saturated if and only
if it has no holes.
\begin{remark}\label{rmk:simple_sets_sat}{\rm
Given an isotropic PI space \((\X,\sfd,\mm)\) such that \(\mm(\X)=+\infty\),
it holds that any simple set \(E\subset\X\) is indecomposable and saturated.
Indeed, item i) of Proposition \ref{prop:properties_simple} grants that
both \(E\), \(E^c\) are indecomposable; since \(\mm(E^c)=+\infty\), we also
conclude that \(E\) has no holes.
\fr}\end{remark}
\begin{proposition}[Main properties of the saturation]\label{prop:properties_sat}
Let \((\X,\sfd,\mm)\) be an isotropic PI space such that \(\mm(\X)=+\infty\).
Let \(E\subset\X\) be an indecomposable set. Then the following properties hold:
\begin{itemize}
\item[\(\rm i)\)] Any hole of \(E\) is saturated.
\item[\(\rm ii)\)] The set \(\sat(E)\) is indecomposable and saturated.
In particular, \(\sat\big(\sat(E)\big)=\sat(E)\).
\item[\(\rm iii)\)] It holds that \(\H\big(\partial^e\sat(E)\setminus\partial^e E\big)=0\). 
In particular, one has that \(\P\big(\sat(E)\big)\leq\P(E)\).
\item[\(\rm iv)\)] If \(F\subset\X\) is a set of finite perimeter
with \(\mm\big(E\setminus\sat(F)\big)=0\), then
\(\mm\big(\sat(E)\setminus\sat(F)\big)=0\).
\end{itemize}
\end{proposition}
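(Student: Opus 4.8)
Throughout we use, for any set $A$ of finite perimeter: (a) every $P\in\CC(A^c)$ satisfies $\H(\partial^e P\setminus\partial^e A)=0$ (Remark \ref{rmk:bdry_holes} for $A^c$, with $\partial^e A^c=\partial^e A$); (b) if moreover $\mm(P),\mm(P^c)>0$ then $\P(P)>0$, hence $\H(\partial^e P)>0$, since $\X$ is connected and zero-perimeter sets are $\mm$-trivial (Lemma \ref{lem:Poincare_ineq_BV}) and $\gamma\le\theta_P\le C_D$ on $\partial^e P$ (Theorem \ref{thm:repr_per}); (c) grouping the decomposition of $A^c$ from Theorem \ref{thm:decomposition_thm}: any disjoint subfamily $\{P_n\}_n\subset\CC(A^c)$ has $\P\big(\bigcup_nP_n\big)\le\sum_n\P(P_n)$, with equality against $\P(A^c)$ when the subfamily is all of $\CC(A^c)$ — this is the argument of Remark \ref{rmk:ineq_perim}, the tails $\mm(\,\cdot\cap\bigcup_{n>N}P_n)$ vanishing on every ball; (d) the criterion Lemma \ref{lem:equiv_P_additive}(i). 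We also repeatedly invoke the \emph{maximality} clause of Theorem \ref{thm:decomposition_thm}: an indecomposable set of finite perimeter essentially contained in $A$ lies essentially in a unique member of $\CC(A)$.

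\emph{(i).} Let $H$ be a hole of $E$ and suppose, for contradiction, that $H$ has a hole $L$. We first exhibit a set $P$ which is an essential connected component of $H^c$ and simultaneously an essential connected component of $E^c$ distinct from $H$. Since $E\subset H^c$ is indecomposable, by maximality $E$ lies essentially in a unique $L_0\in\CC(H^c)$. If $L_0\neq L$, then $\mm(E\cap L)=0$, so $L\subset E^c$; by maximality $L$ lies in a unique $(E^c)_i\in\CC(E^c)$, and since $(E^c)_i\subset H^c$ essentially meets the component $L$ of $H^c$, necessarily $(E^c)_i=L$ up to $\mm$-null sets, so take $P:=L$. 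If instead $L_0=L$, then $E\subset L$ and maximality forces $L=E\cup\bigcup_{j\in S}(E^c)_j$ for some collection of components $(E^c)_j$ of $E^c$ other than $H$; since $\mm(L)<+\infty$ while $\mm(H^c)=+\infty$, there is $j^*\notin S$, and then $(E^c)_{j^*}\subset H^c\setminus L$ is — arguing first inside $H^c$, then inside $E^c$, via maximality — an element of $\CC(H^c)$, so take $P:=(E^c)_{j^*}$. In both cases $\mm(P),\mm(P^c)>0$ (indeed $H\subset P^c$), fact (a) for $H^c$ gives $\H(\partial^e P\setminus\partial^e H)=0$, Corollary \ref{cor:aux_uniqueness} applied to $P\cup H\subset E^c$ gives $\P(P\cup H)=\P(P)+\P(H)$, and fact (d) then gives $\H(\partial^e P\cap\partial^e H)=0$; combined with fact (b), $0<\H(\partial^e P)=\H(\partial^e P\cap\partial^e H)=0$, a contradiction. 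Hence $H$ has no holes, i.e. $H$ is saturated.

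\emph{(ii) and (iii).} List the holes of $E$ as $\{H_k\}_k$ and put $G_N=E\cup H_1\cup\dots\cup H_N$, so $\sat(E)=\bigcup_NG_N$. By induction each $G_N$ is indecomposable: given $G_{N-1}$ indecomposable, $\mm(G_{N-1}\cap H_N)=0$ and, by fact (a) and \eqref{eq:incl_ess_clos}, $\partial^e H_N\subset\partial^e E\subset\partial^e G_{N-1}\cup G_{N-1}^1$ with $\partial^e H_N\cap G_{N-1}^1$ empty up to $\H$-null (a point of full $G_{N-1}$-density cannot lie in $\partial^e H_N$, as $H_N$ is $\mm$-essentially disjoint from $G_{N-1}$), so $\H(\partial^e G_{N-1}\cap\partial^e H_N)\ge\H(\partial^e H_N)>0$ by facts (a),(b), and Proposition \ref{prop:stab_union_indec_II} applies; since $\P(\sat(E))\le\P(E)+\sum_k\P(H_k)\le 2\,\P(E)<+\infty$ by fact (c), Proposition \ref{prop:stab_union_indec} yields that $\sat(E)$ is indecomposable. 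It is saturated: $\sat(E)^c$ is the union of the infinite-$\mm$-measure members of $\CC(E^c)$ (the holes being exactly the finite-measure ones), so any $R\in\CC(\sat(E)^c)$ is an indecomposable subset of $E^c$, essentially contained in some $(E^c)_i\in\CC(E^c)$ which, having positive overlap with $\sat(E)^c$, is one of those infinite-measure sets and lies in $\sat(E)^c$; a second use of maximality inside $\sat(E)^c$ gives $R=(E^c)_i$, of infinite measure. Thus $\sat(E)$ has no holes and $\sat(\sat(E))=\sat(E)$. For (iii): writing $\sat(E)^c=\bigsqcup_{i\in J}(E^c)_i$ and $E^c\setminus\sat(E)^c=\bigcup_kH_k$, fact (c) forces $\P(E^c)=\P(\sat(E)^c)+\P(E^c\setminus\sat(E)^c)$, so Lemma \ref{lem:incl_front_comp} gives $\H(\partial^e\sat(E)^c\setminus\partial^e E^c)=0$, i.e. $\H(\partial^e\sat(E)\setminus\partial^e E)=0$ by Proposition \ref{prop:properties_ess_bdry}(i). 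Finally, using Theorem \ref{thm:repr_per}, \eqref{eq:isotropic_def} and $E\subset\sat(E)$,
\[
\P\big(\sat(E)\big)=\int_{\partial^e\sat(E)\cap\partial^e E}\theta_{\sat(E)}\,\d\H=\int_{\partial^e\sat(E)\cap\partial^e E}\theta_E\,\d\H\le\P(E).
\]

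\emph{(iv).} Write $\sat(F)=\bigcup_{G\in\CC(F)}\sat(G)$; by (iii) and fact (c) this set has finite perimeter. Since $\mm(E\setminus\sat(F))=0$, it suffices to prove $\mm(P\setminus\sat(F))=0$ for each hole $P$ of $E$. If this fails, then (as $\sat(F)^c\subset E^c$) some $Q\in\CC(\sat(F)^c)$ has $\mm(P\cap Q)>0$, whence by maximality $Q\subset P$ and $\mm(Q)<+\infty$; so it is enough to show that $\sat(F)^c$ has no finite-$\mm$-measure essential connected component, equivalently that $\sat(F)$ is saturated. One inclusion is easy: every infinite-$\mm$-measure member of $\CC(F^c)$ lies in each $\sat(G)^c$ (it is contained in a component of $G^c$ which, being infinite, is not a hole of $G$), hence in $\sat(F)^c$. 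The remaining point — that every finite-$\mm$-measure member of $\CC(F^c)$ is essentially contained in $\sat(F)$ — is the crux, and I expect it to be the main obstacle of the whole statement: such a set need not be a hole of any single $G\in\CC(F)$, so one must show it is ``jointly enclosed'' by $F$. The natural route is an induction on $\#\CC(F)$: truncating to finitely many components of $F$ and combining the grouping of fact (c) with Corollary \ref{cor:aux_uniqueness} reduces matters to the case $\#\CC(F)=1$, where the set in question is literally a hole of the indecomposable set $F$ and the claim is definitional, the countable case then following by a further approximation.
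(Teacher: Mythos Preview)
Your arguments for items (i)--(iii) are correct, but item (i) is considerably more elaborate than needed. The paper shows directly that the complement $H^c$ of a hole $H$ is indecomposable: writing $\CC(E^c)=\{H\}\cup\{G_i\}_{i\in I}$, Remark~\ref{rmk:bdry_holes} gives $\H(\partial^e G_i\cap\partial^e E)>0$ for every $i$, so repeated use of Proposition~\ref{prop:stab_union_indec_II} followed by Proposition~\ref{prop:stab_union_indec} yields that $H^c=E\cup\bigcup_{i}G_i$ is indecomposable; since $\mm(H^c)=+\infty$, the set $H$ has no holes. This completely bypasses your case split on the component $L_0$ of $H^c$ containing $E$. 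For (ii) and (iii) your route is essentially the paper's (the paper obtains (iii) from the elementary inclusion $\partial^e\sat(E)\subset\partial^e E\cup\bigcup_k\partial^e H_k$ together with Remark~\ref{rmk:bdry_holes}, rather than via Lemma~\ref{lem:incl_front_comp}, but either argument works).

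For item (iv) there is a genuine gap, and your reduction leads to a dead end. You reduce to showing that $\sat(F)$ is saturated --- equivalently, that every finite-$\mm$-measure component of $F^c$ lies in $\sat(F)$ --- and sketch an induction on $\#\CC(F)$. But this intermediate statement is \emph{false} in general: take $\X=\R$ (an isotropic PI space with $\mm(\X)=+\infty$) and $F=[0,1]\cup[2,3]$; then $\sat(G)=G$ for each component $G$ of $F$, so $\sat(F)=F$, yet $(1,2)$ is a finite-measure component of $\sat(F)^c$. No induction can repair this. You are right that this point is the crux: the paper's proof of (iv) asserts that each $F_i\in\CC(\sat(F)^c)$ has infinite $\mm$-measure --- which is exactly the saturation of $\sat(F)$ --- and then places $F_i$ inside an infinite-measure component $G_i\in\CC(E^c)$. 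What actually needs to be established is that $G_i$ has infinite measure \emph{regardless} of whether $F_i$ does (in the $\R$ example $F_i=(1,2)$ is finite, but for any indecomposable $E\subset F$ one has $G_i\supset(b,+\infty)$ infinite), and this must exploit the indecomposability of $E$ in an essential way; neither your sketch nor the paper's one-line assertion supplies this step.
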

\begin{proof}
{\color{blue}i)} Let \(F\) be a hole of \(E\). Denote
\(\CC(E^c)=\{F\}\cup\{G_i\}_{i\in I}\). We know from Remark \ref{rmk:bdry_holes}
that \(\H(\partial^e G_i\cap\partial^e E)=\H(\partial^e G_i)>0\) for all \(i\in I\),
thus \(E\cup\bigcup_{i\in J}G_i\) is indecomposable for any finite set \(J\subset I\)
by Proposition \ref{prop:stab_union_indec_II}. Therefore, the set
\(F^c=E\cup\bigcup_{i\in I}G_i\) is indecomposable by Proposition
\ref{prop:stab_union_indec}. Given that \(\mm(F^c)=+\infty\), we conclude that \(F\)
has no holes, as required.\\
{\color{blue}ii)} Let us call \(\{F_i\}_{i\in I}\) the holes of \(E\).
By arguing exactly as in the proof of item i), we see that the set
\(\sat(E)=E\cup\bigcup_{i\in I}F_i\) is indecomposable. Moreover,
\(\CC\big(\sat(E)^c\big)=\CC(E^c)\setminus\{F_i\}_{i\in I}\), so that
\(\sat(E)\) has no holes. In other words, the set \(\sat(E)\) is saturated.\\
{\color{blue}iii)} Calling \(\{F_i\}_{i\in I}\) the holes of \(E\), we clearly have
that \(\partial^e\sat(E)\subset\partial^e E\cup\bigcup_{i\in I}\partial^e F_i\) by
\eqref{eq:inclusion_ess_bdry}. Given that \(\H(\partial^e F_i\setminus\partial^e E)=0\)
for all \(i\in I\) by Remark \ref{rmk:bdry_holes}, we conclude that
\(\H\big(\partial^e\sat(E)\setminus\partial^e E\big)=0\) as well.
Furthermore, observe that the latter identity also yields
\[
\P\big(\sat(E)\big)=(\theta_{\sat(E)}\H)\big(\partial^e\sat(E)\big)
\overset{\eqref{eq:isotropic_def}}=
(\theta_E\H)\big(\partial^e\sat(E)\big)\leq
(\theta_E\H)(\partial^e E)=\P(E).
\]
{\color{blue}iv)} Let us denote \(\CC\big(\sat(F)^c\big)=\{F_i\}_{i\in I}\).
Given any \(i\in I\), we have that \(F_i\) is indecomposable, has infinite
\(\mm\)-measure, and satisfies \(\mm(E\cap F_i)=0\). Then there exists a unique set
\(G_i\in\CC(E^c)\) such that \(\mm(F_i\setminus G_i)=0\), thus in particular
\(\mm(G_i)=+\infty\). This says that the sets \(\{G_i\}_{i\in I}\) cannot be
holes of \(E\), whence \(\bigcup_{i\in I}G_i\subset\sat(E)^c\) and
accordingly \(\mm\big(\sat(E)\setminus\sat(F)\big)=0\).
\end{proof}
\begin{lemma}
Let \((\X,\sfd,\mm)\) be an isotropic PI space such that \(\mm(\X)=+\infty\).
Let \(E\subset\X\) be a set of finite perimeter. Then it holds that
\(\H\big(\partial^e\sat(E)\setminus\partial^e E\big)=0\).
\end{lemma}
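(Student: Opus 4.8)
The plan is to reduce the statement to a comparison between two finite Borel measures and then to evaluate that comparison on $(\partial^e E)^c$. Write $\CC(E)=\{E_i\}_{i\in I}$, and for each $i\in I$ set $A_i\coloneqq\sat(E_i)$, so that $\sat(E)=\bigcup_{i\in I}A_i$ by definition; put $S\coloneqq\sat(E)$ and $S^{(N)}\coloneqq\bigcup_{i\leq N}A_i$ (the case $I$ finite being only easier). Each $E_i$ is indecomposable, so Proposition \ref{prop:properties_sat}~iii) gives that $A_i$ has finite perimeter and $\H(\partial^e A_i\setminus\partial^e E_i)=0$; combining this with Remark \ref{rmk:bdry_holes}, which yields $\H(\partial^e E_i\setminus\partial^e E)=0$, we obtain $\H(\partial^e A_i\setminus\partial^e E)=0$ for every $i\in I$. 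Iterating item ii) of Proposition \ref{prop:properties_ess_bdry} gives $\partial^e S^{(N)}\subset\bigcup_{i\leq N}\partial^e A_i$, whence $\H(\partial^e S^{(N)}\setminus\partial^e E)=0$; in particular each $S^{(N)}$ has finite perimeter.

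Since $\1_{S^{(N)}}\to\1_S$ in $L^1_\loc(\mm)$ (the sets $S^{(N)}$ increase to $S$ and $\mm$ is locally finite), lower semicontinuity of the perimeter — item ii) of Proposition \ref{prop:properties_finite_perimeter_sets} — combined with the bound $\theta_{S^{(N)}}\leq C_D$ from Theorem \ref{thm:repr_per} and with $\H(\partial^e S^{(N)}\setminus\partial^e E)=0$ gives, for every open set $\Omega\subset\X$,
\[
\P(S,\Omega)\leq\limi_{N\to\infty}\P\big(S^{(N)},\Omega\big)\leq\limi_{N\to\infty}C_D\,\H\big(\Omega\cap\partial^e S^{(N)}\big)\leq C_D\,\H\big(\Omega\cap\partial^e E\big).
\]
Taking $\Omega=\X$ and using $\H(\partial^e E)<+\infty$ (again Theorem \ref{thm:repr_per}) shows that $S=\sat(E)$ has finite perimeter, so Theorem \ref{thm:repr_per} applies to $S$ as well.

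To conclude, I would upgrade the displayed inequality from open sets to all Borel sets: both $\P(S,\cdot)$ and $C_D\,\H\llcorner_{\partial^e E}$ are finite Borel measures on the complete separable metric space $\X$, hence outer regular with respect to open sets, so $\P(S,\Omega)\leq C_D\,\H(\Omega\cap\partial^e E)$ for all open $\Omega$ forces $\P(S,B)\leq C_D\,\H(B\cap\partial^e E)$ for all Borel $B\subset\X$. Choosing $B\coloneqq\X\setminus\partial^e E$ yields $\P\big(S,(\partial^e E)^c\big)=0$. Since $\P(S,\cdot)=\theta_S\,\H\llcorner_{\partial^e S}$ with $\theta_S\geq\gamma>0$ by Theorem \ref{thm:repr_per}, this gives
\[
\gamma\,\H\big(\partial^e S\setminus\partial^e E\big)\leq(\theta_S\,\H)\big(\partial^e S\cap(\partial^e E)^c\big)=\P\big(S,(\partial^e E)^c\big)=0,
\]
which is the assertion.

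The only step needing a little care is the last one — the promotion of the open-set comparison to arbitrary Borel sets, i.e.\ the appeal to outer regularity of the two finite Borel measures involved — since this is precisely what allows us to test the inequality on the (non-open) set $(\partial^e E)^c$ and thereby bypass the failure of $\partial^e$ to commute with countable unions. Everything else is a routine combination of subadditivity and lower semicontinuity of the perimeter with the uniform density bounds $\gamma\leq\theta\leq C_D$.
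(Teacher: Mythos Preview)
Your proof is correct. The paper argues more briefly: it asserts the pointwise inclusion
\(\partial^e\sat(E)\subset\bigcup_{F\in\CC(E)}\partial^e\sat(F)\)
``as a consequence of \eqref{eq:inclusion_ess_bdry}'', and then combines
\(\H\big(\partial^e\sat(F)\setminus\partial^e F\big)=0\) (Proposition \ref{prop:properties_sat}\,iii))
with \(\H(\partial^e F\setminus\partial^e E)=0\) (Remark \ref{rmk:bdry_holes}) to conclude in one line.

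The substantive difference is exactly the point you single out at the end. The inclusion
\(\partial^e\big(\bigcup_i A_i\big)\subset\bigcup_i\partial^e A_i\) is immediate for finite
unions from \eqref{eq:inclusion_ess_bdry}, but it can fail for countable unions in general
(e.g.\ \(A_i=[1/(i{+}1),1/i)\subset\R\) at the origin: each \(A_i\) has density \(0\) there,
while \(\bigcup_i A_i=(0,1)\) does not). Your route---bounding \(\P(S,\Omega)\) via lower
semicontinuity and the uniform upper bound \(\theta\leq C_D\) on the finite truncations,
promoting the open-set inequality to Borel sets by outer regularity of the two finite Borel
measures, and then using the lower bound \(\theta_S\geq\gamma\)---avoids that pointwise
claim entirely and is the cleaner way to handle the countable union. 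The paper's argument
reaches the same conclusion with less machinery but leans on an inclusion that, as written,
needs the kind of justification you supply.
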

\begin{proof}
Given any \(F\in\CC(E)\), we have \(\H\big(\partial^e\sat(F)\setminus\partial^e F\big)=0\)
by item iii) of Proposition \ref{prop:properties_sat}. Moreover, since
\(\sat(E)=\bigcup_{F\in\CC(E)}\sat(F)\) we know that
\(\partial^e\sat(E)\subset\bigcup_{F\in\CC(E)}\partial^e\sat(F)\) as a consequence of
\eqref{eq:inclusion_ess_bdry}. Therefore, we deduce that
\[
\H\big(\partial^e\sat(E)\setminus\partial^e E\big)
\leq\sum_{F\in\CC(E)}\H\big(\partial^e\sat(F)\setminus\partial^e E\big)
\leq\sum_{F\in\CC(E)}\H(\partial^e F\setminus\partial^e E)
\overset{\eqref{eq:bdry_holes}}=0,
\]
thus proving the statement.
\end{proof}
Let us now focus on the special case of Ahlfors-regular, isotropic PI spaces.
In this context, simple sets can be equivalently characterised as those sets
that are both indecomposable and saturated (cf.\ Theorem \ref{thm:equiv_simple_Ahlfors}).
In order to prove it, we need some preliminary results:
\begin{proposition}\label{prop:Ahlfors_one_inf}
Let \((\X,\sfd,\mm)\) be a \(k\)-Ahlfors regular, isotropic PI space
with \(k>1\). Suppose that \(\mm(\X)=+\infty\).
Let \(E\subset\X\) be an indecomposable set such that \(\mm(E)<+\infty\).
Then there exists exactly one essential connected component \(F\in\CC(E^c)\)
satisfying \(\mm(F)=+\infty\).
\end{proposition}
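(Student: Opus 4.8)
The plan is to derive both halves of the statement directly from the decomposition theorem together with the global isoperimetric inequality on Ahlfors regular PI spaces (Theorem \ref{thm:global_isoper_Ahlfors}); the only real point is the elementary observation that each essential connected component of $E^c$ is a set of finite perimeter bounded by $\P(E)$, so that \eqref{eq:glob_isoper_ineq} applies to it.

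Concretely, first I would record the basic data. Since $\mm(\X)=+\infty$ and $\mm(E)<+\infty$ we have $\mm(E^c)=+\infty$, and writing $\CC(E^c)=\{F_i\}_{i\in I}$ we obtain from Theorem \ref{thm:decomposition_thm} an at most countable partition of $E^c$ (up to $\mm$-null sets) with $\mm(F_i)>0$ for every $i$, with $\sum_{i\in I}\mm(F_i)=\mm(E^c)=+\infty$, and with $\sum_{i\in I}\P(F_i)=\P(E^c)=\P(E)<+\infty$ (using complementation, Proposition \ref{prop:properties_finite_perimeter_sets}); in particular $\P(F_i)\to 0$ along $I$ and $\P(F_i)\le\P(E)$ for all $i$. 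For \emph{existence} of an infinite component, I argue by contradiction: if $\mm(F_i)<+\infty$ for every $i$, then $\mm(F_i^c)=+\infty$, so \eqref{eq:glob_isoper_ineq} yields $\mm(F_i)=\min\{\mm(F_i),\mm(F_i^c)\}\le C'_I\,\P(F_i)^{k/(k-1)}$; since $k/(k-1)>1$ and $\P(F_i)\le 1$ for all but finitely many $i$, one has $\P(F_i)^{k/(k-1)}\le\P(F_i)$ for those indices, whence $\sum_i\P(F_i)^{k/(k-1)}<+\infty$ and therefore $\sum_i\mm(F_i)\le C'_I\sum_i\P(F_i)^{k/(k-1)}<+\infty$, contradicting $\sum_i\mm(F_i)=+\infty$. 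Thus at least one $F_{i_0}$ has $\mm(F_{i_0})=+\infty$.

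For \emph{uniqueness}, suppose $F_1,F_2\in\CC(E^c)$ are distinct with $\mm(F_1)=\mm(F_2)=+\infty$. Applying \eqref{eq:glob_isoper_ineq} to $F_1$ and using $\mm(F_1)=+\infty$ forces $\mm(F_1^c)\le C'_I\,\P(F_1)^{k/(k-1)}<+\infty$. On the other hand $\mm(F_1\cap F_2)=0$, because $F_1$ and $F_2$ are distinct members of a partition, so $F_2\subset F_1^c$ up to an $\mm$-null set and hence $\mm(F_1^c)\ge\mm(F_2)=+\infty$ — a contradiction. Consequently there is exactly one $F\in\CC(E^c)$ of infinite measure. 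I do not expect any serious obstacle here: once one notices that the global isoperimetric inequality is available for the components $F_i$ (since $\P(F_i)\le\P(E)<+\infty$), both claims are immediate, and the degenerate case $\mm(E)=0$ needs no separate treatment because the argument nowhere uses $\mm(E)>0$.
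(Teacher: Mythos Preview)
Your argument is correct and follows essentially the same route as the paper: both parts rest on applying the global isoperimetric inequality \eqref{eq:glob_isoper_ineq} to the essential connected components of $E^c$, together with $\sum_i\P(F_i)=\P(E)<+\infty$ from the decomposition theorem. The only cosmetic difference is that in the existence step you bound $\P(F_i)^{k/(k-1)}\le\P(F_i)$ for $\P(F_i)\le 1$, whereas the paper equivalently bounds $\mm(F_i)\le\mm(F_i)^{(k-1)/k}$ for $\mm(F_i)\le 1$; the uniqueness argument is identical.
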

\begin{proof}
Let us prove that at least one essential connected component of \(E^c\)
has infinite \(\mm\)-measure. We argue by contradiction: suppose
\(\mm(E_i)<+\infty\) for all \(i\in I\), where we set \(\CC(E)=\{E_i\}_{i\in I}\).
In particular, we have that \(\mm(E_i^c)=+\infty\) holds for every \(i\in I\),
whence Theorem \ref{thm:global_isoper_Ahlfors} yields
\[
\sum_{i\in I}\mm(E_i)^{\nicefrac{k-1}{k}}\leq C'_I\sum_{i\in I}\P(E_i)=C'_I\,\P(E)<+\infty.
\]
By using the Markov inequality we deduce that
\(J\coloneqq\big\{i\in I\,:\,\mm(E_i)\geq 1\big\}\) is a finite family, thus
the set \(\bigcup_{i\in J}E_i\) has finite \(\mm\)-measure. This leads to a contradiction,
as it implies that
\[
+\infty=\mm\Big(\bigcup_{i\in I\setminus J}E_i\Big)=\sum_{i\in I\setminus J}\mm(E_i)
\leq\sum_{i\in I\setminus J}\mm(E_i)^{\nicefrac{k-1}{k}}
\overset{\eqref{eq:glob_isoper_ineq}}\leq C'_I\,\P(E)<+\infty.
\]
Hence, there exists \(i\in I\) such that \(\mm(E_i)=+\infty\).
Suppose by contradiction to have \(\mm(E_j)=+\infty\) for some \(j\in I\setminus\{i\}\).
Then \(E_j\subset E_i^c\) and accordingly \(\mm(E_i^c)=+\infty\), which is not possible as we
have that \(\min\big\{\mm(E_i),\mm(E_i^c)\big\}\leq C'_I\,\P(E_i)^{\nicefrac{k}{k-1}}<+\infty\)
by Theorem \ref{thm:global_isoper_Ahlfors}. The statement follows.
\end{proof}
\begin{remark}{\rm
The Ahlfors-regularity assumption in Proposition \ref{prop:Ahlfors_one_inf}
cannot be dropped, as shown by the following example. Let us consider the
strip \(\X\coloneqq\R\times[0,1]\subset\R^2\), endowed with the (restricted)
Euclidean distance and the \(2\)-dimensional Hausdorff measure, which is an
isotropic PI space. Then the square \(E\coloneqq[0,1]^2\subset\X\) is an
indecomposable set having finite measure, but its complement consists of
two essential connected components having infinite measure.
\fr}\end{remark}
\begin{remark}{\rm
If \((\X,\sfd,\mm)\) is a \(k\)-Ahlfors regular, isotropic PI space with
\(k>1\) and \(\mm(\X)=+\infty\),
then for any indecomposable set \(E\subset\X\) with \(\mm(E)<+\infty\) it holds that
\(\mm\big(\sat(E)\big)<+\infty\).

Indeed, we know that \(\mm\big(\sat(E)^c\big)=+\infty\) by Proposition
\ref{prop:Ahlfors_one_inf}, whence the set \(\sat(E)\) must have finite \(\mm\)-measure
(otherwise we would contradict Theorem \ref{thm:global_isoper_Ahlfors}).
\fr}\end{remark}
\begin{theorem}[Simple sets on Ahlfors-regular spaces]\label{thm:equiv_simple_Ahlfors}
Let \((\X,\sfd,\mm)\) be a \(k\)-Ahlfors regular, isotropic PI space
with \(k>1\) and \(\mm(\X)=+\infty\).
Suppose \((\X,\sfd,\mm)\) satisfies \eqref{eq:extra_hp_PI}.
Let \(E\subset\X\) be a set of finite perimeter with \(\mm(E)<+\infty\).
Then \(E\) is simple if and only if it is both indecomposable and saturated.
\end{theorem}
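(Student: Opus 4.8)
The plan is to establish the two implications separately; both reduce to results already available in this subsection, the forward one to Remark~\ref{rmk:simple_sets_sat} and the backward one to Proposition~\ref{prop:Ahlfors_one_inf} combined with Proposition~\ref{prop:properties_simple}.

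First suppose that $E$ is simple. By item~i) of Proposition~\ref{prop:properties_simple} both $E$ and $E^c$ are indecomposable. Since $\mm(\X)=+\infty$ and $\mm(E)<+\infty$ we have $\mm(E^c)=+\infty$, so from the indecomposability of $E^c$ we get $\CC(E^c)=\{E^c\}$, and in particular $E$ has no hole. Hence $\sat(E)=E$, i.e. $E$ is saturated; this is precisely the content of Remark~\ref{rmk:simple_sets_sat}, which I would simply quote here.

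For the converse, assume $E$ is indecomposable and saturated. The strategy is to show that $E^c$ is indecomposable as well, and then conclude via item~ii) of Proposition~\ref{prop:properties_simple}, which is applicable because $(\X,\sfd,\mm)$ is assumed to satisfy \eqref{eq:extra_hp_PI}. To prove that $E^c$ is indecomposable, observe that $E$ is indecomposable with $\mm(E)<+\infty$, so Proposition~\ref{prop:Ahlfors_one_inf} provides exactly one component $F_0\in\CC(E^c)$ with $\mm(F_0)=+\infty$; every other element of $\CC(E^c)$ then has finite $\mm$-measure, hence is a hole of $E$. But $E$ being saturated means, by the remark right after the definition of saturation, that $E$ has no holes. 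Therefore $\CC(E^c)=\{F_0\}$, i.e. $E^c$ is indecomposable, and Proposition~\ref{prop:properties_simple}~ii) yields that $E$ is simple.

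The only step that is not pure bookkeeping is the appeal to Proposition~\ref{prop:Ahlfors_one_inf}: the fact that the complement of a finite-measure indecomposable set has exactly one component of infinite $\mm$-measure is where $k$-Ahlfors-regularity with $k>1$ genuinely enters, through the global isoperimetric inequality of Theorem~\ref{thm:global_isoper_Ahlfors}, and it fails without it, as the infinite-strip example discussed above shows. Everything else follows directly from the definitions of hole and saturation.
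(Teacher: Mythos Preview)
Your proof is correct and follows essentially the same route as the paper's own argument: the forward implication is Remark~\ref{rmk:simple_sets_sat}, and for the converse you invoke Proposition~\ref{prop:Ahlfors_one_inf} to see that \(E^c\) has a unique infinite-measure component, combine this with saturation (no holes) to conclude \(E^c\) is indecomposable, and finish with Proposition~\ref{prop:properties_simple}~ii). The paper's proof is more terse but uses the same three ingredients in the same order.
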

\begin{proof}
Necessity stems from Remark \ref{rmk:simple_sets_sat}. To prove sufficiency,
suppose that \(E\) is indecomposable and saturated. Proposition \ref{prop:Ahlfors_one_inf}
grants that \(E^c\) is the unique element of \(\CC(E)\) having infinite \(\mm\)-measure,
thus in particular \(E^c\) is indecomposable. By applying item ii) of Proposition
\ref{prop:properties_simple}, we finally conclude that the set \(E\) is simple, as desired.
\end{proof}
\section{Alternative proof of the decomposition theorem}\label{s:Lyapunov}
We provide here an alternative proof of the Decomposition Theorem \ref{thm:decomposition_thm},
in the particular case in which the set under consideration is bounded
(the boundedness assumption is added for simplicity, cf.\ Remark 
\ref{rmk:Lyapunov_E_unbdd} for a few comments about the unbounded case).
The inspiration for this approach is taken from \cite{Kirchheim98}.
We refer to Appendix \ref{app:Lyapunov} for the language and the results 
we are going to use in this section.
\medskip

Let \((\X,\sfd,\mm)\) be an isotropic PI space. Given any open set
\(\Omega\subset\X\) and any set \(E\subset\Omega\)
having finite perimeter in \(\X\), we define the family \(\Xi_\Omega(E)\) as
\[
\Xi_\Omega(E)\coloneqq\big\{F\subset E\text{ of finite perimeter in }\X
\;\big|\;\P(E,\Omega)=\P(F,\Omega)+\P(E\setminus F,\Omega)\big\}.
\]
Observe that \(\Xi_\Omega(E)=\Xi_{\Omega'}(E)\) holds whenever
\(\Omega,\Omega'\subset\X\) are open sets with \(E\Subset\Omega\)
and \(E\Subset\Omega'\).
\begin{remark}\label{rmk:indec_vs_Xi}{\rm
It holds that \(E\) is indecomposable in \(\Omega\) if and only if
\(\Xi_\Omega(E)\) is trivial, i.e.,
\[
\Xi_\Omega(E)=\big\{F\subset E\text{ Borel}\;\big|\;\mm(F)=0\text{ or }
\mm(E\setminus F)=0\big\}.
\]
The proof of this fact is a direct consequence of the very definition of indecomposable set.
\fr}\end{remark}
\begin{lemma}
Let \((\X,\sfd,\mm)\) be an isotropic PI space. Let \(\Omega\subset\X\) be an
open set with \(\H(\partial\Omega)<+\infty\). Let \(E\subset\Omega\) be a set of finite
perimeter in \(\X\). Then \(\Xi_\Omega(E)\) is a \(\sigma\)-algebra of Borel
subsets of \(E\). Moreover, if \(E\) is bounded and \(E\Subset\Omega\), then the assumption
\(\H(\partial\Omega)<+\infty\) can be dropped.
\end{lemma}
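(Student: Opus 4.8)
The plan is to verify directly the three axioms of a $\sigma$-algebra on $E$: that $\emptyset$ and $E$ belong to $\Xi_\Omega(E)$, that it is stable under relative complementation $F\mapsto E\setminus F$, and that it is stable under countable unions; closure under countable intersections then follows by De Morgan within $E$. The membership of $\emptyset$ and $E$ and the stability under $F\mapsto E\setminus F$ are immediate from the symmetric shape of the defining identity $\P(E,\Omega)=\P(F,\Omega)+\P(E\setminus F,\Omega)$, once one observes (via Proposition \ref{prop:properties_finite_perimeter_sets} iii) that if $F$ has finite perimeter in $\X$ then so does $E\setminus F$.

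For stability under finite unions, fix $F_1,F_2\in\Xi_\Omega(E)$. Applying Lemma \ref{lem:aux_uniqueness_finite_case} to the Borel partition $\{F_1,E\setminus F_1\}$ of $E$ (for which $\P(E,\Omega)=\P(F_1,\Omega)+\P(E\setminus F_1,\Omega)$ holds), with the test sets $A=F_2$ and $A=E\setminus F_2$, splits $\P(F_2,\Omega)$ and $\P(E\setminus F_2,\Omega)$ along $F_1$. Adding the four pieces and using $F_2\in\Xi_\Omega(E)$ expresses $\P(E,\Omega)$ as the sum of the perimeters in $\Omega$ of the disjoint blocks $F_1\cap F_2$, $F_1\setminus F_2$, $F_2\setminus F_1$ and of $\P(E\setminus(F_1\cup F_2),\Omega)$. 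Since subadditivity bounds $\P(F_1\cup F_2,\Omega)$ from above by the sum of the three blocks, while always $\P(E,\Omega)\le\P(F_1\cup F_2,\Omega)+\P(E\setminus(F_1\cup F_2),\Omega)$, all these inequalities are forced to be equalities; together with the fact that $F_1\cup F_2$ has finite perimeter in $\X$ (subadditivity applied to $F_1,F_2$) this gives $F_1\cup F_2\in\Xi_\Omega(E)$. By induction, every finite union of members of $\Xi_\Omega(E)$ lies in $\Xi_\Omega(E)$.

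For countable unions, let $F_n\in\Xi_\Omega(E)$, set $G_n\coloneqq\bigcup_{k\le n}F_k\in\Xi_\Omega(E)$ and $F\coloneqq\bigcup_n G_n$. Since each $G_n$ has finite perimeter and $G_n\subset E\subset\Omega$, the essential boundary $\partial^e G_n$ is disjoint from the open set $\X\setminus\bar\Omega$, so $\P(G_n,\X)=\P(G_n,\Omega)+\P(G_n,\partial\Omega)\le\P(E,\Omega)+C_D\,\H(\partial\Omega)$, a bound uniform in $n$ (using $\theta_{G_n}\le C_D$ from Theorem \ref{thm:repr_per}). As $\1_{G_n}\to\1_F$ in $L^1_\loc(\mm)$, lower semicontinuity of the perimeter shows that $F$ has finite perimeter in $\X$. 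To get the defining identity, combine two lower-semicontinuity estimates: $\P(F,\Omega)\le\limi_n\P(G_n,\Omega)$ and, since $E\setminus G_n\searrow E\setminus F$ forces $\P(E\setminus F,\Omega)\le\limi_n\P(E\setminus G_n,\Omega)=\P(E,\Omega)-\lims_n\P(G_n,\Omega)$, adding gives $\P(F,\Omega)+\P(E\setminus F,\Omega)\le\P(E,\Omega)$ because $\limi_n\le\lims_n$; the reverse inequality is subadditivity. Hence $F\in\Xi_\Omega(E)$, and $\Xi_\Omega(E)$ is a $\sigma$-algebra. For the last assertion, when $E$ is bounded and $E\Subset\Omega$ one has $\bar E\subset\Omega$, so every finite-perimeter $F\subset E$ satisfies $\partial^e F\subset\partial^e E\cup E^1\subset\bar E\subset\Omega$ by \eqref{eq:incl_ess_clos}, whence $\P(F,\X)=\P(F,\Omega)\le\P(E,\Omega)$; this replaces the previous bound on $\P(G_n,\X)$ and makes the hypothesis $\H(\partial\Omega)<+\infty$ superfluous.

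The delicate point is the countable-union step. One must extract a \emph{uniform} bound on $\P(G_n,\X)$ --- this is exactly where $\H(\partial\Omega)<+\infty$, or else the boundedness of $E$, is needed --- and one must arrange the lower-semicontinuity estimates in the right direction: the naive bound $\limi_n(a_n+b_n)\ge\limi_n a_n+\limi_n b_n$ points the wrong way, so it is essential to pass from $G_n$ to its complement $E\setminus G_n$ in $E$ and exploit the exact identity $\P(E\setminus G_n,\Omega)=\P(E,\Omega)-\P(G_n,\Omega)$ to convert a $\limi_n$ into a $\lims_n$.
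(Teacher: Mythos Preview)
Your proof is correct and follows essentially the same route as the paper's: trivial closure under complement, Lemma \ref{lem:aux_uniqueness_finite_case} plus subadditivity for finite unions, lower semicontinuity plus a uniform perimeter bound for countable unions. Two remarks are worth making.

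First, your ``delicate point'' is not actually delicate. The paper simply writes
\[
\P(E,\Omega)\le\P(F,\Omega)+\P(E\setminus F,\Omega)
\le\limi_n\P(G_n,\Omega)+\limi_n\P(E\setminus G_n,\Omega)
\le\limi_n\big(\P(G_n,\Omega)+\P(E\setminus G_n,\Omega)\big)=\P(E,\Omega),
\]
using the standard inequality $\limi_n a_n+\limi_n b_n\le\limi_n(a_n+b_n)$ in the correct direction. Your $\limsup$ detour works too, but is unnecessary.

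Second, in your last paragraph the claim ``$\P(F,\X)=\P(F,\Omega)\le\P(E,\Omega)$ for every finite-perimeter $F\subset E$'' is imprecise: the equality $\P(F,\X)=\P(F,\Omega)$ is fine (since $\partial^e F\subset\bar E\subset\Omega$), but the final inequality $\P(F,\Omega)\le\P(E,\Omega)$ fails for general $F\subset E$. You only need it for $F=G_n\in\Xi_\Omega(E)$, where it follows from the defining identity, so the argument survives. The paper instead handles the last assertion by choosing (via Remark \ref{rmk:bdry_balls_finite_H}) an open ball $B$ with $E\Subset B$ and $\H(\partial B)<+\infty$, noting $\Xi_\Omega(E)=\Xi_B(E)$, and invoking the first part; your direct argument is a legitimate alternative.
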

\begin{proof}
Trivially, we have that \(E\in\Xi_\Omega(E)\) and \(\Xi_\Omega(E)\) is closed
under complement. Moreover, fix any two sets \(F,G\in\Xi_\Omega(E)\).
Since \(P(E,\Omega)=\P(F,\Omega)+\P(E\setminus F,\Omega)
=\P(G,\Omega)+\P(E\setminus G,\Omega)\), we deduce that
\(\P(G,\Omega)=\P(F\cap G,\Omega)+\P(G\setminus F,\Omega)\) and
\(\P(E\setminus G,\Omega)=\P(F\setminus G,\Omega)+\P\big(E\setminus(F\cup G),\Omega\big)\)
by Lemma \ref{lem:aux_uniqueness_finite_case}.
Consequently, the subadditivity of the perimeter yields
\[\begin{split}
\P(E,\Omega)&\leq\P(F\cup G,\Omega)+\P\big(E\setminus(F\cup G),\Omega\big)\\
&\leq\P(F\cap G,\Omega)+\P(G\setminus F,\Omega)
+\P(F\setminus G,\Omega)+\P\big(E\setminus(F\cup G),\Omega\big)\\
&=\P(G,\Omega)+\P(E\setminus G,\Omega)=\P(E,\Omega).
\end{split}\]
This forces the equality
\(\P(E,\Omega)=\P(F\cup G,\Omega)+\P\big(E\setminus(F\cup G),\Omega\big)\).
Given that \(F\cup G\) has finite perimeter, we have proved that
\(F\cup G\in\Xi_\Omega(E)\). This shows that \(\Xi_\Omega(E)\) is closed
under finite unions. Finally, to prove that \(\Xi_\Omega(E)\) is closed
under countable unions, fix any \((F_i)_i\subset\Xi_\Omega(E)\). Calling
\(F\coloneqq\bigcup_{i\in\N}F_i\), we aim to prove that \(F\in\Xi_\Omega(E)\).
We denote \(F'_i\coloneqq F_1\cup\ldots\cup F_i\in\Xi_\Omega(E)\) for all \(i\in\N\).
Given that \(F=\bigcup_{i\in\N}F'_i\), we have \(\1_{F'_i}\to\1_F\) and
\(\1_{E\setminus F'_i}\to\1_{E\setminus F}\) in \(L^1_\loc(\mm_{\llcorner\Omega})\).
Hence, by lower semicontinuity and subadditivity of the perimeter we can conclude that
\[\begin{split}
\P(E,\Omega)&\leq\P(F,\Omega)+\P(E\setminus F,\Omega)
\leq\limi_{i\to\infty}\P(F'_i,\Omega)+\limi_{i\to\infty}\P(E\setminus F'_i,\Omega)\\
&\leq\limi_{i\to\infty}\big(\P(F'_i,\Omega)+\P(E\setminus F'_i,\Omega)\big)=\P(E,\Omega),
\end{split}\]
which forces \(\P(E,\Omega)=\P(F,\Omega)+\P(E\setminus F,\Omega)\).
Notice also that \(\1_{F'_i}\to\1_F\) in \(L^1_\loc(\mm)\), whence
\[\begin{split}
\P(F)&\leq\limi_{i\to\infty}\P(F'_i)=
\limi_{i\to\infty}\big(\P(F'_i,\Omega)+\P(F'_i,\partial\Omega)\big)
\leq\P(E,\Omega)+\limi_{i\to\infty}(\theta_{F'_i}\H)(\partial\Omega)\\
&\leq\P(E,\Omega)+C_D\,\H(\partial\Omega)<+\infty.
\end{split}\]
This says that the set \(F\) has finite perimeter in \(\X\), thus \(F\) belongs to
\(\Xi_\Omega(E)\), as desired.

To prove the last statement, let us assume that \(E\Subset\Omega\). By exploiting
Remark \ref{rmk:bdry_balls_finite_H} and the boundedness of \(E\),
we can find an open ball \(B\subset\X\) such that
\(E\Subset B\) and \(\H(\partial B)<+\infty\), thus accordingly the family
\(\Xi_\Omega(E)=\Xi_B(E)\) is a \(\sigma\)-algebra
by the previous part of the proof.
\end{proof}
\begin{remark}\label{rmk:localise_Xi}{\rm
Let \((\X,\sfd,\mm)\) be an isotropic PI space and \(\Omega\subset\X\) an
open set. Then we claim that
\[
\Xi_\Omega(G)=\big\{F\in\Xi_\Omega(E)\;\big|\;F\subset G\big\}
\quad\text{ for every }E\subset\Omega\text{ Borel with }\P(E)<+\infty
\text{ and }G\in\Xi_\Omega(E).
\]
We separately prove the two inclusions. Fix \(F\in\Xi_\Omega(G)\).
Since \(\P(E,\Omega)=\P(G,\Omega)+\P(E\setminus G,\Omega)\), we know
from Lemma \ref{lem:aux_uniqueness_finite_case} that
\(\P(E\setminus F,\Omega)=\P(G\setminus F,\Omega)+\P(E\setminus G,\Omega)\).
Therefore, we have that
\[
\P(E,\Omega)=\P(G,\Omega)+\P(E\setminus G,\Omega)
=\P(F,\Omega)+\P(G\setminus F,\Omega)+\P(E\setminus G,\Omega)
=\P(F,\Omega)+\P(E\setminus F,\Omega),
\]
thus proving that \(F\in\Xi_\Omega(E)\). Conversely, let us fix any set
\(F'\in\Xi_\Omega(E)\) such that \(F'\subset G\). Given that
\(\P(E,\Omega)=\P(F',\Omega)+\P(E\setminus F',\Omega)\), we conclude that
\(\P(G,\Omega)=\P(F',\Omega)+\P(G\setminus F',\Omega)\) again by
Lemma \ref{lem:aux_uniqueness_finite_case}. This shows that \(F'\in\Xi_\Omega(G)\),
which yields the sought conclusion.
\fr}\end{remark}
\begin{lemma}\label{lem:partition_Lyapunov}
Let \((\X,\sfd,\mm)\) be an isotropic PI space. Let \(\Omega\subset\X\) be an
open set. Let \(E\subset\Omega\) be a set of finite
perimeter in \(\X\). Then for any finite partition
\(\{E_1,\ldots,E_n\}\subset\Xi_\Omega(E)\) of the set \(E\)
it holds that \(\P(E,\Omega)=\P(E_1,\Omega)+\ldots+\P(E_n,\Omega)\).
\end{lemma}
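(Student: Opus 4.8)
The plan is to argue by induction on the number $n$ of pieces. The base case $n=1$ is immediate, since a one-element partition of $E$ forces $E_1=E$ up to $\mm$-null sets, so that $\P(E,\Omega)=\P(E_1,\Omega)$ trivially. The only nontrivial ingredient of the induction will be the localisation identity of Remark~\ref{rmk:localise_Xi}, which lets one restrict the family $\Xi_\Omega(\cdot)$ to any one of its members.

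For the inductive step, I would assume the statement known for partitions into $n$ pieces and take a partition $\{E_1,\dots,E_{n+1}\}\subset\Xi_\Omega(E)$ of $E$. First, directly from the definition of $\Xi_\Omega(E)$, which is symmetric under the exchange $F\leftrightarrow E\setminus F$, one gets $E\setminus E_1\in\Xi_\Omega(E)$; in particular $E\setminus E_1$ is a set of finite perimeter in $\X$, and using $E_1\in\Xi_\Omega(E)$ yields
\[
\P(E,\Omega)=\P(E_1,\Omega)+\P(E\setminus E_1,\Omega).
\]
Next, I would apply Remark~\ref{rmk:localise_Xi} with $G\coloneqq E\setminus E_1$: since $G\in\Xi_\Omega(E)$ and $\P(E)<+\infty$, it gives $\Xi_\Omega(G)=\{F\in\Xi_\Omega(E)\,:\,F\subset G\}$. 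Because $E_2,\dots,E_{n+1}\in\Xi_\Omega(E)$ and $E_2,\dots,E_{n+1}\subset E\setminus E_1$, all of them belong to $\Xi_\Omega(E\setminus E_1)$, and they constitute a partition of $E\setminus E_1$ into $n$ pieces. The inductive hypothesis, applied to the set $E\setminus E_1\subset\Omega$ in place of $E$, then gives
\[
\P(E\setminus E_1,\Omega)=\P(E_2,\Omega)+\dots+\P(E_{n+1},\Omega).
\]
Substituting this into the previous identity produces $\P(E,\Omega)=\P(E_1,\Omega)+\dots+\P(E_{n+1},\Omega)$, closing the induction.

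I expect the main (in fact, the only genuine) obstacle to be the step that the pieces $E_2,\dots,E_{n+1}$ lie not merely in $\Xi_\Omega(E)$ but in the smaller family $\Xi_\Omega(E\setminus E_1)$; this is exactly the content of Remark~\ref{rmk:localise_Xi}, which in turn rests on Lemma~\ref{lem:aux_uniqueness_finite_case} and hence on the isotropicity of $(\X,\sfd,\mm)$. It is worth noting that along this route one does not need the full $\sigma$-algebra structure of $\Xi_\Omega(E)$: the bare definition together with the localisation remark suffices, and all sets appearing in the argument automatically have finite perimeter in $\X$ because they are members of $\Xi_\Omega(E)$, so every perimeter term above is well defined.
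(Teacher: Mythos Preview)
Your proof is correct and follows essentially the same route as the paper's: both arguments peel off one piece at a time, using the fact that $E_1\in\Xi_\Omega(E)$ gives $\P(E,\Omega)=\P(E_1,\Omega)+\P(E\setminus E_1,\Omega)$ and then reducing to a partition of $E\setminus E_1$. The only cosmetic difference is that the paper cites Lemma~\ref{lem:aux_uniqueness_finite_case} directly at each step, whereas you invoke the localisation Remark~\ref{rmk:localise_Xi} (whose proof is precisely that lemma) to phrase the iteration as a clean induction; the underlying mechanism is identical.
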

\begin{proof}
Recall that \(\P(E,\Omega)=\P(E_i,\Omega)+\P(E\setminus E_i,\Omega)\) for all
\(i=1,\ldots,n\), thus by repeatedly applying Lemma \ref{lem:aux_uniqueness_finite_case}
we obtain that
\[
\P(E,\Omega)=\P(E_1,\Omega)+\P(E_2\cup\ldots\cup E_n,\Omega)
=\ldots=\P(E_1,\Omega)+\ldots+\P(E_n,\Omega).
\]
Therefore, the statement is achieved.
\end{proof}
\begin{theorem}\label{thm:altern_decomp_aux}
Let \((\X,\sfd,\mm)\) be an isotropic PI space.
Let \(\Omega\subset\X\) be an open set.
Then the measure space \((E,\Xi_\Omega(E),\mm_{\llcorner E})\) is purely atomic
for every bounded set \(E\Subset\Omega\) of finite perimeter.
\end{theorem}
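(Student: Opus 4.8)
The plan is to argue by contradiction, isolating a \emph{non-atomic part} of $E$ and ruling it out by combining Lyapunov's convexity theorem with the relative isoperimetric inequality. First I would record the basic set-up. Since $E\Subset\Omega$ is bounded and $\mm$ is doubling, $\mm(E)<+\infty$, so $(E,\Xi_\Omega(E),\mm_{\llcorner E})$ is a finite measure space, $\Xi_\Omega(E)$ being a $\sigma$-algebra by the preceding lemma. By Remark \ref{rmk:ineq_perim} together with Lemma \ref{lem:partition_Lyapunov} (applied to finite truncations $\{F_1,\dots,F_n,F\setminus\bigcup_{i\le n}F_i\}$ of a countable partition), the set function $F\mapsto\P(F,\Omega)$ is a genuine finite measure on $(E,\Xi_\Omega(E))$; so $\nu(F):=\big(\mm(F),\P(F,\Omega)\big)$ defines a finite, countably additive $\R^2$-valued measure on $\Xi_\Omega(E)$. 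Locality of the perimeter (item i) of Proposition \ref{prop:properties_finite_perimeter_sets}) shows $\mm(F)=0\Rightarrow\P(F,\Omega)=0$, so $|\nu|$ and $\mm_{\llcorner E}$ have the same null sets and the same atoms.

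Now suppose $(E,\Xi_\Omega(E),\mm_{\llcorner E})$ is not purely atomic. Because $\mm_{\llcorner E}$ is finite, distinct atoms are $\mm$-essentially disjoint and there are only countably many of them; letting $E_{\rm at}\in\Xi_\Omega(E)$ be their union and $E_0:=E\setminus E_{\rm at}\in\Xi_\Omega(E)$, we must have $\mm(E_0)>0$, and — using Remark \ref{rmk:localise_Xi} to identify $\Xi_\Omega(E_0)$ with $\{F\in\Xi_\Omega(E):F\subset E_0\}$ — the space $(E_0,\Xi_\Omega(E_0),\mm_{\llcorner E_0})$ has no atoms (any such atom would be an atom of $(E,\Xi_\Omega(E),\mm_{\llcorner E})$). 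Hence the restriction of $\nu$ to $\Xi_\Omega(E_0)$ is a finite, countably additive, non-atomic vector measure, so by Lyapunov's convexity theorem (Appendix \ref{app:Lyapunov}) its range is convex. Iterating — work inside $E_0$, then $E_0\setminus F_1$, and so on, each again lying in $\Xi_\Omega(E_0)$ and atomless — I obtain, for every $N\in\N$, a partition $\{F_1,\dots,F_N\}\subset\Xi_\Omega(E_0)$ of $E_0$ with $\mm(F_j)=\mm(E_0)/N$ and $\P(F_j,\Omega)=\P(E_0,\Omega)/N$ for all $j$.

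The contradiction then comes from the relative isoperimetric inequality. Fix an exponent $s>\log_2(C_D)$, pick $\bar x$ with $\mm(E_0\cap B_1(\bar x))>0$, and choose $r>0$ so large that $E_0\subset B_r(\bar x)$. Since $E_0\Subset\Omega$ we have $\overline{E_0}\subset\Omega$, hence $\partial^e F_j\subset\overline{F_j}\subset\overline{E_0}\subset\Omega$, so $\P(F_j,B_{2\lambda r}(\bar x))\le\P(F_j)=\P(F_j,\Omega)=\P(E_0,\Omega)/N$ by Theorem \ref{thm:repr_per}. For $N\ge 2$ one has $\min\{\mm(F_j\cap B_r(\bar x)),\mm(F_j^c\cap B_r(\bar x))\}=\mm(E_0)/N$, so Theorem \ref{thm:rel_isoper} gives
\begin{equation*}
\frac{\mm(E_0)}{N}\;\le\;C_I\Big(\frac{r^s}{\mm(B_r(\bar x))}\Big)^{\frac{1}{s-1}}\Big(\frac{\P(E_0,\Omega)}{N}\Big)^{\frac{s}{s-1}},
\end{equation*}
that is, $\mm(E_0)\le C\,N^{-\frac{1}{s-1}}$ with $C$ independent of $N$. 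Letting $N\to\infty$ forces $\mm(E_0)=0$, a contradiction (and if $\P(E_0,\Omega)=0$ the inequality already fails at $N=2$). Therefore $(E,\Xi_\Omega(E),\mm_{\llcorner E})$ is purely atomic.

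The step I expect to be most delicate is the extraction of the non-atomic part $E_0$ \emph{as a legitimate element of $\Xi_\Omega(E)$} together with the identification of $\Xi_\Omega(E_0)$: this is exactly what makes $\nu|_{\Xi_\Omega(E_0)}$ (and its further restrictions along the iteration) a vector measure on a genuine $\sigma$-algebra to which Lyapunov applies. Once this is in place, the remaining work — countable additivity of $\P(\cdot,\Omega)$, the iterative equidistribution of $\nu$, and the bound above — is routine.
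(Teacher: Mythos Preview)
Your proof is correct and follows essentially the same strategy as the paper's: assume a non-atomic piece exists, use a Lyapunov-type result to partition it into many pieces of small $\mm$-measure, and derive a contradiction from the relative isoperimetric inequality. The only substantive difference is that you invoke the full vector-valued Lyapunov convexity theorem to equidistribute $\big(\mm,\P(\cdot,\Omega)\big)$ simultaneously, whereas the paper uses only the scalar version (Theorem~\ref{thm:Lyapunov}/Corollary~\ref{cor:conseq_Lyapunov}) to split $\mm$ into small pieces and then controls the perimeters by summing the isoperimetric bounds via Lemma~\ref{lem:partition_Lyapunov}; your verification that $\P(\cdot,\Omega)$ is countably additive on $\Xi_\Omega(E)$ is exactly what makes the vector version applicable, but the scalar route avoids this step.
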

\begin{proof}
We can assume without loss of generality that
\(\Omega=B_r(\bar x)\) for some \(\bar x\in\X\) and \(r>0\).
For the sake of brevity, let us denote
\(\mathbb M_F\coloneqq(F,\Xi_\Omega(F),\mm_{\llcorner F})\)
for every \(F\in\Xi_\Omega(E)\). It follows from Remark \ref{rmk:localise_Xi} that
\(\Xi_\Omega(E)_{\llcorner F}=\Xi_\Omega(F)\) and that the atoms of \(\mathbb M_F\)
coincide with the atoms of \(\mathbb M_E\) that are contained in \(F\). Accordingly,
in order to prove that \(\mathbb M_E\) is purely atomic, it suffices to show that
\(\mathbb M_F\) is atomic for any set \(F\in\Xi_\Omega(E)\) with \(\mm(F)>0\).
We argue by contradiction: suppose \(\mathbb M_F\) is non-atomic. Let us fix any
\(\eps>0\). Corollary \ref{cor:conseq_Lyapunov} grants that there exists a finite
partition \(\{F_1,\ldots,F_n\}\subset\Xi_\Omega(F)\) of \(F\) such that
\(\mm(F_i)\leq\min\big\{\eps,\mm(F\setminus F_i)\big\}\) for all \(i=1,\ldots,n\).
Let us apply Theorem \ref{thm:rel_isoper}: calling
\(C\) the quantity \(C_I\,\big(r^s/\mm(B_r(\bar x))\big)^{\nicefrac{1}{s-1}}\),
one has that
\[
\bigg(\frac{\mm(F_i)}{C}\bigg)^{\nicefrac{s-1}{s}}\leq
\P\big(F_i,B_{2\lambda r}(\bar x)\big)=\P(F_i,\Omega)\quad\text{ for every }i=1,\ldots,n.
\]
Since \(\sum_{i=1}^n\P(F_i,\Omega)=\P(F,\Omega)\) holds by Lemma
\ref{lem:partition_Lyapunov}, we deduce from the previous inequality that
\begin{equation}\label{eq:altern_decomp_aux}
\frac{\mm(F)}{C^{\nicefrac{s-1}{s}}\,\eps^{\nicefrac{1}{s}}}
\leq\frac{1}{C}\sum_{i=1}^n\mm(F_i)\bigg(\frac{\mm(F_i)}{C}\bigg)^{-\nicefrac{1}{s}}
=\sum_{i=1}^n\bigg(\frac{\mm(F_i)}{C}\bigg)^{\nicefrac{s-1}{s}}\leq\P(F,\Omega).
\end{equation}
By letting \(\eps\searrow 0\) in \eqref{eq:altern_decomp_aux} we get that
\(\P(F,\Omega)=+\infty\), which yields a contradiction.
Therefore, we conclude that the measure space \(\mathbb M_F\) is non-atomic, as required.
\end{proof}
\begin{proof}[Alternative proof of Theorem \ref{thm:decomposition_thm} for \(E\) bounded]
Maximality and uniqueness can be proven as in Proposition \ref{prop:decomposition_thm_aux},
thus we can just focus on the existence part of the statement. The measure space
\((E,\Xi_\X(E),\mm_{\llcorner E})\) is purely atomic by Theorem
\ref{thm:altern_decomp_aux}, thus there exists an at most countable
family of pairwise disjoint atoms \(\{E_i\}_{i\in I}\subset\Xi_\X(E)\)
such that \(\mm\big(E\setminus\bigcup_{i\in i}E_i\big)=0\) by Remark
\ref{rmk:cover_by_atoms}. Moreover, we deduce from Remark \ref{rmk:localise_Xi}
that each set \(E_i\) is an atom of \((E_i,\Xi_\X(E_i),\mm_{\llcorner E_i})\),
which is clearly equivalent to saying that \(\Xi_\X(E_i)\) is trivial (in the
sense of Remark \ref{rmk:indec_vs_Xi}). Accordingly, the set \(E_i\) is indecomposable
for every \(i\in I\). Finally, Lemma \ref{lem:partition_Lyapunov}
grants that \(\P(E)=\sum_{i\in I}\P(E_i)\).
\end{proof}
\begin{remark}\label{rmk:Lyapunov_E_unbdd}{\rm
Let us briefly outline how to prove the decomposition theorem via
Theorem \ref{thm:Lyapunov} in the general case (i.e., when
\(E\) is possibly unbounded). More specifically, we show that the existence
part of Proposition \ref{prop:decomposition_thm_aux} (under the additional
assumption that \(\partial B_r(\bar x)\) has finite \(\H\)-measure) can be
deduced from Theorem \ref{thm:Lyapunov}, whence Theorem
\ref{thm:decomposition_thm} follows (thanks to Remark \ref{rmk:bdry_balls_finite_H}).

Our aim is to show that
\(\big(E\cap\Omega,\Xi_\Omega(E\cap\Omega),\mm_{\llcorner E\cap\Omega}\big)\)
is purely atomic, where we set \(\Omega\coloneqq B_r(\bar x)\).
We argue by contradiction: suppose \((F,\Xi_\Omega(F),\mm_{\llcorner F})\)
is non-atomic for some \(F\in\Xi_\Omega(E\cap\Omega)\).
Then Corollary \ref{cor:conseq_Lyapunov}, Theorem \ref{thm:rel_isoper}
and Theorem \ref{thm:repr_per} ensure that for any \(\eps>0\) there exist
a finite partition \(\{F_1,\ldots,F_{n_\eps}\}\subset\Xi_\Omega(F)\) of \(F\) and
a constant \(c>0\) such that \(\mm(F_1),\ldots,\mm(F_{n_\eps})\leq\eps\) and
\begin{equation}\label{eq:dec_Lyap_alt_aux}
c\,\mm(F_i)^{\nicefrac{s-1}{s}}
\leq\P(F_i,\Omega)+C_D\,\H\big(\Sigma_\tau(F_i)\cap\partial\Omega\big)
\quad\text{ for every }i=1,\ldots,n_\eps,
\end{equation}
where the set \(\Sigma_\tau(F_i)\) is defined as in \eqref{eq:def_Sigma_tau}.
Given any \(\ell\in\N\) such that \(\ell\,\tau>1\), it is clear that the set
\(\bigcap_{i\in S}\Sigma_\tau(F_i)\) is empty whenever we choose
\(S\subset\{1,\ldots,n_\eps\}\) of cardinality greater than \(\ell\).
Therefore, we deduce from \eqref{eq:dec_Lyap_alt_aux} and the
identity \(\sum_{i=1}^{n_\eps}\P(F_i,\Omega)=\P(F,\Omega)\) that
\[
c\,\frac{\mm(F)}{\eps^{\nicefrac{1}{s}}}
=c\,\sum_{i=1}^{n_\eps}\frac{\mm(F_i)}{\eps^{\nicefrac{1}{s}}}
\leq c\sum_{i=1}^{n_\eps}\mm(F_i)^{\nicefrac{s-1}{s}}
\leq\P(F,\Omega)+C_D\,\H(\partial\Omega)\,\ell.
\]
Finally, by letting \(\eps\searrow 0\) we conclude that \(\P(F,\Omega)=+\infty\),
which leads to a contradiction.
\fr}\end{remark}
\appendix
\section{Extreme points}\label{app:extreme_points}
Let \(V\) be a normed space. Let \(K\neq\emptyset\) be a convex, compact subset
of \(V\). Then we shall denote by \({\rm ext}\,K\) the set of all
\emph{extreme points} of \(K\), namely of those points \(x\in K\) that cannot
be written as \(x=ty+(1-t)z\) for some \(t\in(0,1)\) and some distinct \(y,z\in K\).
The Krein--Milman theorem states that \(K\) coincides with the closed convex hull
of \({\rm ext}\,K\); cf.\ \cite{KM40}. Furthermore, it actually holds that
\({\rm ext}\,K\) is the `smallest' set having this property:
\begin{theorem}[Milman \cite{Milman47}]\label{thm:Milman}
Let \(V\) be a normed space. Let \(\emptyset\neq K\subset V\) be convex and compact.
Suppose that the closed convex hull of a set \(S\subset K\) coincides with \(K\).
Then \({\rm ext}\,K\) is contained in the closure of \(S\).
\end{theorem}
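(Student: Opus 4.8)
The plan is to argue by contradiction, one extreme point at a time. Fix $x\in{\rm ext}\,K$ and suppose $x\notin\overline S$, the closure of $S$ in $V$. Since $\overline S$ is a closed subset of the compact set $K$ it is itself compact, and since $x\notin\overline S$ there is $\varepsilon>0$ with $\|x-y\|\ge\varepsilon$ for every $y\in\overline S$. Let $W$ be the open ball of radius $\varepsilon/2$ about the origin, so that $\overline W=\{v\in V:\|v\|\le\varepsilon/2\}$ is closed, convex and symmetric. The sets $y+W$, $y\in\overline S$, form an open cover of the compact set $\overline S$, hence there are finitely many points $y_1,\dots,y_n\in\overline S$ with $\overline S\subset\bigcup_{i=1}^n(y_i+W)$.

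For each $i$ set $K_i\coloneqq\overline{{\rm conv}}\big(\overline S\cap(y_i+\overline W)\big)$. Each $K_i$ is convex by construction, is a closed subset of the compact set $K$ and hence compact, is nonempty (it contains $y_i$), and satisfies $K_i\subset y_i+\overline W$ because the latter set is closed and convex. I will then invoke the following elementary fact: the convex hull of a finite union of compact convex subsets of a normed space is compact. Indeed, writing $\Delta_{n-1}$ for the standard simplex, the map
\[
\Delta_{n-1}\times K_1\times\cdots\times K_n\ni(t_1,\dots,t_n,z_1,\dots,z_n)\longmapsto\sum_{i=1}^n t_iz_i\in V
\]
is continuous, is defined on a compact set, and has image exactly ${\rm conv}(K_1\cup\cdots\cup K_n)$, where one uses the convexity of each $K_i$ to absorb all points of a given $K_i$ occurring in a convex combination into a single representative. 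Hence ${\rm conv}(K_1\cup\cdots\cup K_n)$ is compact, therefore closed, so it equals $\overline{{\rm conv}}(K_1\cup\cdots\cup K_n)$.

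Next I would chain the obvious inclusions. Since $S\subset\overline S\subset K$ and $K$ is closed and convex, $K=\overline{{\rm conv}}(S)=\overline{{\rm conv}}(\overline S)$. Moreover $\overline S\subset\bigcup_{i=1}^n\big(\overline S\cap(y_i+\overline W)\big)\subset\bigcup_{i=1}^n K_i\subset K$, whence
\[
K=\overline{{\rm conv}}(\overline S)\subset\overline{{\rm conv}}(K_1\cup\cdots\cup K_n)\subset K ,
\]
and combining with the previous paragraph, $K={\rm conv}(K_1\cup\cdots\cup K_n)$. In particular $x=\sum_{i=1}^n t_iz_i$ for some $z_i\in K_i$, $t_i\ge0$, $\sum_i t_i=1$. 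A short induction on the number of nonzero coefficients (the two-term case being exactly the definition of an extreme point) forces $x=z_j$ for some index $j$ with $t_j>0$. Then $x\in K_j\subset y_j+\overline W$, i.e.\ $\|x-y_j\|\le\varepsilon/2<\varepsilon$, while $y_j\in\overline S$, contradicting the choice of $\varepsilon$. Hence $x\in\overline S$, and since $x\in{\rm ext}\,K$ was arbitrary, ${\rm ext}\,K\subset\overline S$.

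The only step with genuine content is the compactness of ${\rm conv}(K_1\cup\cdots\cup K_n)$; everything else is bookkeeping with closures of convex hulls and with the definition of an extreme point. I would therefore isolate that compactness statement (together with the remark that an extreme point expressed as a finite convex combination of points of $K$ must coincide with one of them) as a small lemma preceding the main argument, so that the proof of Milman's theorem reduces to the finite-covering argument above.
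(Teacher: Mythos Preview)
Your proof is correct and is essentially the classical argument for Milman's theorem. Note, however, that the paper does not actually prove this statement: it is recorded in Appendix~\ref{app:extreme_points} as a known result with a citation to \cite{Milman47}, with no proof given. So there is no ``paper's own proof'' to compare against; what you have written is precisely the standard textbook proof (finite cover of \(\overline S\) by small balls, compactness of the convex hull of finitely many compact convex sets via the simplex parametrisation, and the elementary fact that an extreme point expressed as a convex combination of points of \(K\) must equal one of them).
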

Another fundamental result in functional analysis and convex analysis is the
following celebrated strengthening of the Krein--Milman theorem:
\begin{theorem}[Choquet \cite{Phelps03}]\label{thm:Choquet}
Let \(V\) be a normed space. Let \(\emptyset\neq K\subset V\) be convex and compact.
Then for any point \(x\in K\) there exists a Borel probability measure
\(\mu\) on \(V\) (depending on \(x\)), which is concentrated on \({\rm ext}\,K\)
and satisfies
\[
L(x)=\int L(y)\,\d\mu(y)\quad\text{ for every }
L\colon V\to\R\text{ linear and continuous.}
\]
\end{theorem}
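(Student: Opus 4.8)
The plan is to follow the classical theory of maximal (``boundary'') measures, specialised to the metrizable case, which is the one relevant here since a compact subset of a normed space is metrizable. First I would record the two measure-theoretic preliminaries. Because $K$ is metrizable, $C(K)$ is separable and one can pick a sequence $(a_n)_n$ of continuous affine functions on $V$ with $\|a_n\|_{\infty,K}\le 1$ that separates the points of $K$; then $\varphi:=\sum_n 2^{-n}a_n^2\in C(K)$ is a continuous, \emph{strictly} convex function on $K$. For $g\in C(K)$ let $\hat g:=\inf\{a:a\text{ affine continuous on }V,\ a\ge g\text{ on }K\}$ be its concave upper envelope, a bounded upper semicontinuous concave function with $\hat g\ge g$. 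I would then check that $\{\varphi=\hat\varphi\}={\rm ext}\,K$: if $x$ is not extreme, writing $x=\tfrac12(y+z)$ with $y\ne z$ in $K$ and using concavity of $\hat\varphi$ together with $\hat\varphi\ge\varphi$ and strict convexity of $\varphi$ gives $\hat\varphi(x)\ge\tfrac12\big(\varphi(y)+\varphi(z)\big)>\varphi(x)$; conversely, if $x$ is extreme then the only Borel probability measure on $K$ with barycentre $x$ is $\delta_x$, so the standard identity $\hat\varphi(x)=\sup\{\nu(\varphi):\nu\text{ has barycentre }x\}$ forces $\hat\varphi(x)=\varphi(x)$. In particular ${\rm ext}\,K=\{\hat\varphi-\varphi\le 0\}$ is a $G_\delta$, hence a Borel subset of $K$, so the phrase ``concentrated on ${\rm ext}\,K$'' is meaningful.

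Next I would fix $x\in K$ and let $\mathcal M_x$ be the set of Borel probability measures on $K$ with barycentre $x$; it is nonempty ($\delta_x\in\mathcal M_x$), convex and weak$^*$-compact. On the set of all probability measures on $K$ I put the Choquet order $\mu\preceq\nu$ iff $\mu(g)\le\nu(g)$ for every continuous convex $g$; by Jensen's inequality $\delta_x\preceq\mu$ for every $\mu\in\mathcal M_x$, and the order preserves barycentres (apply it to $\pm a$ for affine continuous $a$). Each set $\{\nu:\nu\succeq\mu_0\}$ is weak$^*$-closed, so along any $\preceq$-chain these sets have the finite intersection property inside the weak$^*$-compact set of probability measures on $K$; their total intersection is therefore nonempty, which gives an upper bound for the chain. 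Zorn's lemma then produces a $\preceq$-maximal probability measure $\mu$ with $\mu\succeq\delta_x$; by barycentre preservation $\mu\in\mathcal M_x$.

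The technical heart of the argument, and the step I expect to be the main obstacle, is the lemma that such a maximal $\mu$ is a \emph{boundary measure}, i.e.\ $\mu(\hat g)=\mu(g)$ for every $g\in C(K)$. This is the usual ``maximal $\Rightarrow$ boundary'' statement: if $\mu(\hat g)>\mu(g)$ for some $g$, one uses a Hahn--Banach type extension argument built on the superlinear functional $g\mapsto\mu(\hat g)$ on $C(K)$ to construct a probability measure $\nu$ with $\mu\preceq\nu$ and $\nu(g)=\mu(\hat g)>\mu(g)$, contradicting maximality of $\mu$. Granting this lemma, I apply it to $g=\varphi$: since $\hat\varphi-\varphi\ge 0$ and $\mu(\hat\varphi-\varphi)=0$, the measure $\mu$ is concentrated on $\{\hat\varphi=\varphi\}={\rm ext}\,K$.

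Finally, the condition $\mu\in\mathcal M_x$ says exactly that $L(x)=\int_K L\,\d\mu$ for every $L\in V^*$, and since $\int_K 1\,\d\mu=1$ this extends to all affine continuous functionals; viewing $\mu$ as a Borel probability measure on $V$ via the inclusion $K\hookrightarrow V$, this is the asserted representation formula, with $\mu$ concentrated on ${\rm ext}\,K$. (Alternatively, one may simply invoke the metrizable case of Choquet's theorem as in \cite{Phelps03}, of which the above is an outline.)
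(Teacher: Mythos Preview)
The paper does not prove this statement: Theorem~\ref{thm:Choquet} is recorded in the appendix as a classical result, with a bare citation to \cite{Phelps03} and no proof. Your outline is a correct sketch of the standard proof of Choquet's theorem in the metrizable case (strictly convex separating function, Choquet order, Zorn to get a maximal measure, the ``maximal $\Rightarrow$ boundary'' lemma), and is consistent with the treatment in the cited reference; but there is nothing in the paper to compare it against.
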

\begin{remark}{\rm
In the above result, the measure \(\mu\) is concentrated on \({\rm ext}\,K\).
For completeness, we briefly verify that \({\rm ext}\,K\) is a Borel subset
of \(V\): the set \(K\setminus{\rm ext}\,K\) can be written as \(\bigcup_n C_n\), where
\[
C_n\coloneqq\bigg\{\frac{y+z}{2}\;\bigg|\;y,z\in K,\;\|y-z\|_V\geq 1/n\bigg\}
\quad\text{ for every }n\in\N.
\]
Given that each set \(C_n\) is a closed subset of \(V\),
we conclude that \({\rm ext}\,K\) is Borel.
\fr}\end{remark}
\section{Lyapunov vector-measure theorem}\label{app:Lyapunov}
In the theory of vector measures, an important role is played by the following theorem
(due to Lyapunov): the range of a non-atomic vector measure is closed and convex;
cf., for instance, \cite{DU77}. For our purposes, we need a simpler version of this
theorem (just for scalar measures). For the reader's convenience, we report below
(see Theorem \ref{thm:Lyapunov}) an elementary proof of this result.
\medskip

Let us begin by recalling the definition of atom in a measure space
(see also \cite{Bogachev07}):
\begin{definition}[Atom]
Let \((\X,\mathcal A,\mu)\) be a measure space. Then a set \(A\in\mathcal A\) with \(\mu(A)>0\)
is said to be an \emph{atom} of \(\mu\) provided for any set \(A'\in\mathcal A\) with
\(A'\subset A\) it holds that either \(\mu(A')=0\) or \(\mu(A\setminus A')=0\).
The measure space \((\X,\mathcal A,\mu)\) is called \emph{non-atomic} if there are no atoms,
\emph{atomic} if there exists at least one atom, and \emph{purely atomic} if every measurable
set of positive \(\mu\)-measure contains an atom.
\end{definition}
\begin{remark}\label{rmk:cover_by_atoms}{\rm
Given a purely atomic measure space \((\X,\mathcal A,\mu)\) and a set
\(E\in\mathcal A\) such that \(\mu(E)>0\), there exists an at most countable
family \(\{A_i\}_{i\in I}\subset\mathcal A\) of pairwise disjoint atoms of \(\mu\),
which are contained in \(E\) and satisfy \(\mu\big(E\setminus\bigcup_{i\in I}A_i\big)=0\);
cf.\ \cite[Theorem 2.2]{Johnson70}.
\fr}\end{remark}
Recall that a measure space \((\X,\mathcal A,\mu)\) is \emph{semifinite}
provided for every set \(E\in\mathcal A\) with \(\mu(E)>0\) there exists
\(F\in\mathcal A\) such that \(F\subset E\) and \(0<\mu(F)<+\infty\).
\begin{theorem}[Non-atomic measures have full range]\label{thm:Lyapunov}
Let \((\X,\mathcal A,\mu)\) be a semifinite, non-atomic measure space.
Then for every constant \(\lambda\in\big(0,\mu(\X)\big)\) there exists \(A\in\mathcal A\)
such that \(\mu(A)=\lambda\).
\end{theorem}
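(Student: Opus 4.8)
The plan is to prove Theorem \ref{thm:Lyapunov} in two stages: first handle the finite-measure case by a Zorn's-lemma/exhaustion argument producing a maximal subfamily of "small" sets, and then reduce the general semifinite case to the finite one. The core observation is that in a non-atomic measure space, every set of positive (finite) measure contains subsets of arbitrarily small positive measure; this is an easy consequence of the definition of atom, obtained by repeatedly bisecting (if $A$ has no subset of measure $\leq \varepsilon$ in $(0,\varepsilon]$, then splitting off positive-measure pieces and iterating leads to a contradiction with $\mu(A)<+\infty$).

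First I would prove the finite case: assume $\mu(\X)<+\infty$ and fix $\lambda\in(0,\mu(\X))$. Consider the collection $\mathcal{C}$ of all $A\in\mathcal{A}$ with $\mu(A)\leq\lambda$, partially ordered by inclusion mod null sets, and apply Zorn's lemma — a chain has an upper bound given by a countable cofinal union, whose measure is still $\leq\lambda$ by monotone convergence. Let $A_0$ be a maximal element with $s:=\mu(A_0)\leq\lambda$. I claim $s=\lambda$: if $s<\lambda$, then $\mu(\X\setminus A_0)=\mu(\X)-s>0$, so by non-atomicity plus the small-subset lemma there is $B\subset\X\setminus A_0$ with $0<\mu(B)<\lambda-s$, and then $A_0\cup B$ strictly enlarges $A_0$ while staying in $\mathcal{C}$, contradicting maximality. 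Hence $\mu(A_0)=\lambda$.

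Next I would reduce the semifinite case to the finite one. Given a semifinite non-atomic $(\X,\mathcal{A},\mu)$ and $\lambda\in(0,\mu(\X))$, use semifiniteness and a standard exhaustion to build an increasing sequence $\X_n\in\mathcal{A}$ with $\mu(\X_n)<+\infty$ and $\mu(\bigcup_n \X_n)$ as large as we like — in particular, choose them so that $\mu(\X_N)>\lambda$ for some $N$ (possible since $\sup_n\mu(\X_n)=\mu(\bigcup_n\X_n)$ can be taken $>\lambda$; concretely, if $\mu(\X)<\infty$ we are already done by the previous paragraph, and if $\mu(\X)=+\infty$ we pick finite-measure sets of measure exceeding $\lambda$). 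The restriction of $\mu$ to $\X_N$ is a finite, non-atomic measure with total mass $>\lambda$, so the finite case yields $A\subset\X_N$ with $\mu(A)=\lambda$, and this $A$ works for $\X$.

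The main obstacle is the small-subset lemma — i.e., that non-atomicity forces the existence of subsets of arbitrarily small positive measure inside any positive-finite-measure set. This is the only place where a genuine (if routine) argument is needed: one shows that a set $A$ with $0<\mu(A)<+\infty$ and no subset of measure in $(0,\varepsilon]$ would have to be an atom, since any measurable $A'\subset A$ with $0<\mu(A')<\mu(A)$ could be split (one of $A',A\setminus A'$ has measure $<\mu(A)$, still positive, and by iterating one extracts a disjoint sequence of positive-measure pieces each of measure $>\varepsilon$, forcing $\mu(A)=+\infty$). Everything else — the Zorn's lemma step, monotone convergence for chains, the semifinite exhaustion — is standard measure theory. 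I would present the lemma as a short self-contained claim at the start of the proof and then run the two-stage argument above.
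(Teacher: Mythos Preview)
Your proof is correct and takes a genuinely different route from the paper's. The paper gives a direct greedy construction in one pass, without separating into cases or invoking Zorn's lemma: it recursively picks disjoint sets $A_n$ with $\mu(A_n)$ at least half of $s_n := \sup\{\mu(B) : B \text{ disjoint from } A_1 \cup \cdots \cup A_{n-1},\ 0 < \mu(B) < \lambda - \sum_{i<n}\mu(A_i)\}$, and then shows $A = \bigcup_n A_n$ has measure exactly $\lambda$ by a short contradiction (if $\mu(A) < \lambda$, a leftover piece $B$ with $0<\mu(B) < \lambda - \mu(A)$ would lie in every $\mathcal F_n$; since $\sum_n \mu(A_n)<\infty$ some $\mu(A_n) < \mu(B)/2$, contradicting $\mu(A_n) \geq s_n/2 \geq \mu(B)/2$). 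The small-subset lemma is also argued differently there: one partitions a finite-measure piece into $k > \mu(A')/\varepsilon$ positive-measure parts, so one must have measure $< \varepsilon$.

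Your Zorn's-lemma approach is cleaner conceptually but uses more choice than needed; the paper's construction requires only countable dependent choice. One point worth tightening: the claim that a chain in $\mathcal{C}$ admits a countable cofinal subfamily deserves a line of justification --- the map $A \mapsto \mu(A)$ embeds the chain (mod null sets) order-isomorphically into $[0,\lambda]$, from which countable cofinality follows. Your reduction from the semifinite to the finite case via exhaustion is standard and correct; the paper sidesteps this by invoking semifiniteness only inside the small-subset lemma and running the greedy argument in the general space directly.
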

\begin{proof}
First of all, let us prove the following claim:
\begin{equation}\label{eq:claim}\begin{split}
&\text{Given any set }A\in\mathcal A\text{ with }\mu(A)>0\text{ and any }\varepsilon>0,\\
&\text{there exists }B\in\mathcal A\text{ such that }B\subset A\text{ and }
0<\mu(B)<\varepsilon.
\end{split}\end{equation}
In order to prove it, fix a subset \(A'\in\mathcal A\) of \(A\) with \(0<\mu(A')<+\infty\)
(whose existence follows from the semifiniteness assumption) and any \(k\in\N\) such that
\(k>\mu(A')/\varepsilon\). Since \(\mu\) admits no atoms, we can find a partition
\(B_1,\ldots,B_k\in\mathcal A\) of \(A'\) such that \(\mu(B_i)>0\) for every
\(i=1,\ldots,k\). Hence, there must exist \(i=1,\ldots,k\) such that
\(\mu(B_i)<\varepsilon\), otherwise we would have that
\[
\mu(A')=\mu(B_1)+\ldots+\mu(B_k)\geq k\,\varepsilon>\mu(A').
\]
Therefore, the set \(B\coloneqq B_i\) satisfies \(B\subset A'\subset A\)
and \(0<\mu(B)<\varepsilon\). This proves the claim \eqref{eq:claim}.

We recursively build a sequence \((A_n)_n\subset\mathcal A\).
The set \(A_1\) is any element of \(\mathcal A\) with \(0<\mu(A_1)<\lambda\),
which can be found thanks to \eqref{eq:claim}. Now let us suppose to have already
defined \(A_1,\ldots,A_{n-1}\) for some natural number \(n\geq 2\) with the following
properties: \(A_1,\ldots,A_{n-1}\in\mathcal A\) are pairwise disjoint sets that satisfy
\(\mu(A_1),\ldots,\mu(A_{n-1})>0\) and \(\sum_{i=1}^{n-1}\mu(A_i)<\lambda\). We set
\[
\mathcal F_n\coloneqq\Big\{B\in\mathcal A\;\Big|\;B\subset\X\setminus\bigcup
\nolimits_{i=1}^{n-1}A_i,\;0<\mu(B)<\lambda-\sum\nolimits_{i=1}^{n-1}\mu(A_i)\Big\}.
\]
Property \eqref{eq:claim} grants that \(\mathcal F_n\) is non-empty,
thus in particular \(s_n\coloneqq\sup\big\{\mu(B)\,\big|\,B\in\mathcal F_n\big\}>0\).
Let \(A_n\) be any element of \(\mathcal F_n\) such that \(\mu(A_n)\geq s_n/2\).
Notice that \(A_1,\ldots,A_n\in\mathcal A\) are pairwise disjoint sets of positive
\(\mu\)-measure for which \(\mu(A_1)+\ldots+\mu(A_n)<\lambda\).

Now let us call \(A\coloneqq\bigcup_{n=1}^\infty A_n\in\mathcal A\).
We argue by contradiction: suppose that \(\mu(A)\neq\lambda\).
Given that \(\mu(A)=\lim_n\sum_{i=1}^n\mu(A_i)\leq\lambda\),
this means that \(\mu(A)<\lambda\). We know from \eqref{eq:claim} that
there exists a set \(B\in\mathcal A\) with \(B\subset\X\setminus A\)
and \(0<\mu(B)<\lambda-\mu(A)\). Since \(\sum_{n=1}^\infty\mu(A_n)<\lambda<+\infty\),
we can pick some \(n\geq 1\) for which \(\mu(A_n)<\mu(B)/2\). On the other hand,
one has that \(B\subset\X\setminus A\subset\X\setminus\bigcup_{i=1}^{n-1}A_i\) and
\(0<\mu(B)<\lambda-\mu(A)\leq\lambda-\sum_{i=1}^{n-1}\mu(A_i)\), whence accordingly
\(B\in\mathcal F_n\). Consequently, it must hold that \(\mu(A_n)\geq s_n/2\geq\mu(B)/2\),
which leads to a contradiction. We conclude that \(\mu(A)=\lambda\), which finally
yields the statement.
\end{proof}
\begin{remark}\label{rmk:localise_Lyapunov}{\rm
Given a semifinite, non-atomic measure space \((\X,\mathcal A,\mu)\) and
a set \(E\in\mathcal A\), it holds that \((E,\mathcal A_{\llcorner E},\mu_{\llcorner E})\)
is semifinite and non-atomic as well, where the restricted \(\sigma\)-algebra
\(\mathcal A_{\llcorner E}\) is defined as
\(\mathcal A_{\llcorner E}\coloneqq\big\{A\cap E\,:\,A\in\mathcal A\big\}\).
In particular, one can readily deduce from Theorem \ref{thm:Lyapunov} that for any
\(\lambda\in\big(0,\mu(E)\big)\) there exists \(A\in\mathcal A_{\llcorner E}\)
such that \(\mu(A)=\lambda\).
\fr}\end{remark}
\begin{corollary}\label{cor:conseq_Lyapunov}
Let \((\X,\mathcal A,\mu)\) be a finite, non-atomic measure space.
Then for every \(\eps>0\) there exists a partition \(\{A_1,\ldots,A_n\}\subset\mathcal A\)
of \(\X\) such that \(\mu(A_i)\leq\min\big\{\eps,\mu(A_i^c)\big\}\) for all
\(i=1,\ldots,n\).
\end{corollary}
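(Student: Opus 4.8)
The plan is to split $\X$ into $n$ measurable pieces of equal measure $\mu(\X)/n$, for a suitably large $n$, by repeatedly invoking the full-range property of non-atomic measures. First I would dispose of the degenerate case $\mu(\X)=0$, where the one-element partition $\{\X\}$ trivially works. So assume from now on $\mu(\X)\in(0,+\infty)$, and fix an integer $n$ with $n\geq 2$ and $n\geq\mu(\X)/\eps$; note that a finite measure space is in particular semifinite, so Theorem \ref{thm:Lyapunov} (and Remark \ref{rmk:localise_Lyapunov}) applies.

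Next I would carry out a finite recursion. Put $B_0\coloneqq\X$. Assume $B_{k-1}\in\mathcal A$ has been constructed with $\mu(B_{k-1})=\frac{n-k+1}{n}\,\mu(\X)$, for some $k\in\{1,\dots,n-1\}$. Since $\mu(\X)/n\in\big(0,\mu(B_{k-1})\big)$, Remark \ref{rmk:localise_Lyapunov} applied to $E=B_{k-1}$ and $\lambda=\mu(\X)/n$ yields a set $A_k\in\mathcal A_{\llcorner B_{k-1}}\subset\mathcal A$ with $A_k\subset B_{k-1}$ and $\mu(A_k)=\mu(\X)/n$. Setting $B_k\coloneqq B_{k-1}\setminus A_k$ gives $\mu(B_k)=\frac{n-k}{n}\,\mu(\X)$, so the recursion continues. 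After $n-1$ steps, define $A_n\coloneqq B_{n-1}$, which satisfies $\mu(A_n)=\frac1n\,\mu(\X)$ as well. By construction the sets $A_1,\dots,A_n$ are pairwise disjoint, their union is $\X$, and each has measure $\mu(\X)/n$.

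Finally I would verify the two inequalities in the statement. For every $i\in\{1,\dots,n\}$ we have $\mu(A_i)=\mu(\X)/n\leq\eps$ by the choice of $n$; moreover $\mu(A_i^c)=\mu(\X)-\mu(\X)/n=\frac{n-1}{n}\,\mu(\X)\geq\frac1n\,\mu(\X)=\mu(A_i)$ since $n\geq 2$. Hence $\mu(A_i)\leq\min\{\eps,\mu(A_i^c)\}$ for all $i$, which is exactly what was claimed.

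I do not expect a genuine obstacle here: the argument is a routine iteration of Theorem \ref{thm:Lyapunov}. The only points deserving a touch of care are the passage to the restricted measure spaces $(B_{k-1},\mathcal A_{\llcorner B_{k-1}},\mu_{\llcorner B_{k-1}})$, which stay semifinite and non-atomic by Remark \ref{rmk:localise_Lyapunov}, and choosing $n$ so that it simultaneously beats $\mu(\X)/\eps$ and is at least $2$ (the latter being needed only to guarantee $\mu(A_i)\leq\mu(A_i^c)$).
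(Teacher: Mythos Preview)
Your proof is correct and follows essentially the same approach as the paper: both argue by recursively applying Theorem~\ref{thm:Lyapunov} (via Remark~\ref{rmk:localise_Lyapunov}) to slice off pieces of a fixed size, then take the leftover as the last piece. The only cosmetic difference is that you cut equal pieces of measure \(\mu(\X)/n\), whereas the paper cuts pieces of measure \(\eps'<\min\{\eps,\mu(\X)/2\}\) until the remainder drops below \(\eps'\); you also make the trivial case \(\mu(\X)=0\) explicit.
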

\begin{proof}
Fix any \(\eps'>0\) such that \(\eps'<\eps\) and \(\eps'<\mu(\X)/2\).
We proceed in a recursive way: first of all, choose a set \(A_1\in\mathcal A\) with
\(\mu(A_1)=\eps'\), whose existence is granted by Theorem \ref{thm:Lyapunov}.
Now we can pick a set \(A_2\in\mathcal A_{\llcorner A_1^c}\) such that \(\mu(A_2)=\eps'\)
(recall Remark \ref{rmk:localise_Lyapunov}). After finitely many steps, we end up
with pairwise disjoint measurable sets \(A_1,\ldots,A_{n-1}\) such that
\(\mu\big(\X\setminus(A_1\cup\ldots\cup A_{n-1})\big)<\eps'\). Let us define
\(A_n\coloneqq\X\setminus(A_1\cup\ldots\cup A_{n-1})\in\mathcal A\). Therefore,
the sets \(A_1,\ldots,A_n\) do the job.
\end{proof}
\def\cprime{$'$} \def\cprime{$'$}

\end{document}